\newcommand{\Tr}{\operatorname{Tr}}
\newcommand{\Out}{\operatorname{Out}}
\newcommand{\Gal}{\operatorname{Gal}}
\newcommand{\ZZ}{\mathbb{Z}}
\newcommand{\QQ}{\mathbb{Q}}
\newcommand{\V}{\mathrm{V}}
\newcommand{\Li}{\operatorname{Li}}
\newcommand{\slunlhd}{%
  \mathrel{\mathpalette\sl@unlhd\relax}%
}
\newcommand{\sl@unlhd}[2]{%
  \sbox\z@{$#1\lhd$}%
  \sbox\tw@{$#1\leqslant$}%
  \dimen@=\ht\tw@
  \advance\dimen@-\ht\z@
  \ifx#1\displaystyle
    \advance\dimen@ .2pt
  \else
    \ifx#1\textstyle
      \advance\dimen@ .2pt
    \fi
  \fi
  \ooalign{\raisebox{\dimen@}{$\m@th#1\lhd$}\cr$\m@th#1\leqslant$\cr}%
}
\newtheorem{theorem}{Theorem}[section]
\newtheorem{proposition}[theorem]{Proposition}
\newtheorem{lemma}[theorem]{Lemma}
\newtheorem{corollary}[theorem]{Corollary}
\theoremstyle{definition}
\theoremstyle{remark}
\newtheorem{remark}[theorem]{Remark}
\definecolor{LinkColor}{rgb}{0,0,0} %black
\definecolor{violet}{rgb}{0.56, 0.0, 1.0}
\definecolor{orange(colorwheel)}{rgb}{1.0, 0.5, 0.0}
\definecolor{persianblue}{rgb}{0.11, 0.22, 0.73}
\newcommand{\PSL}{\operatorname{PSL}}
\title{Orders of units in integral group rings and blocks of defect $1$}
\author[M.~Caicedo]{Mauricio Caicedo}
\address{Vakgroep Wiskunde, Vrije Universiteit Brussel, Pleinlaan 2, 1050 Brussels, Belgium.}
\email{\href{mailto:mcaicedo@vub.ac.be}{mcaicedo@vub.ac.be}, \href{mailto:leo.margolis@vub.be}{leo.margolis@vub.be}}
\author[L.~Margolis]{Leo Margolis}
\keywords{Integral group ring, unit group, Prime Graph Question, cyclic defect, Brauer tree, Littlewood-Richardson coefficients}
\subjclass[2010]{16U60, 20C05, 20C20, 05E10}
\thanks{Both authors are postdoctoral researchers of the Research Foundation Flanders (FWO - Vlaanderen).}
\begin{document}

\maketitle

\begin{abstract} We show that if a Sylow $p$-subgroup of a finite group $G$ is of order $p$, then the normalized unit group of the integral group ring of $G$ contains a normalized unit of order $pq$ if and only if $G$ contains an element of order $pq$, where $q$ is any prime. We use this result to answer the Prime Graph Question for most sporadic simple groups and some simple groups of Lie type, including seven new infinite series' of such groups. Our methods are based on understanding of blocks of cyclic defect and Young tableaux combinatorics.
%We also use a number-theoretical result on	 squarefree values of integer polynomials.
%We also use a number-theoretical result the proof of which was provided to us by Roger Heath-Brown to whom we are very thankful. 
\end{abstract}

\section{Introduction}
The problem of describing the unit group of the integral group ring $\mathbb{Z}G$ of a finite group $G$ has led to many interesting results and uncovered many connections between different fields of mathematics, see e.g. the monographs  \cite{Sehgal1993, GRG1, GRG2}. One particular type of questions which attracted a lot of attention is how close the units of finite order in $\mathbb{Z}G$ are to being trivial, i.e. of the form $\pm g$ for some $g \in G$. It has been conjectured by Zassenhaus in 1974 that any unit of finite order is trivial up to conjugation in the bigger algebra $\mathbb{Q}G$, but this turned out recently to be wrong in general  \cite{EiseleMargolis18}. A natural question on the arithmetical properties of torsion units in $\mathbb{Z}G$ is formulated in the so-called Spectrum Problem. To state it, denote by $\varepsilon:\mathbb{Z}G \rightarrow \mathbb{Z}$ the augmentation map which sends an element of a group ring to the sum of its coefficients and by $\V(\ZZ G)$ those units in $\mathbb{Z}G$ that have augmentation $1$. The units in $\V(\ZZ G)$ are also called \textit{normalized} units. Then the Spectrum Problem asks if for any unit of finite order in $\V(\ZZ G)$ there is an element in $G$ of the same order, i.e. if the spectra of $\V(\ZZ G)$ and $G$ coincide. The Spectrum Problem has been positively answered for much bigger classes of groups than the conjecture of Zassenhaus, in particular for solvable groups \cite{HertweckOrders}.

A weaker version of the Spectrum Problem, the so-called Prime Graph Question, has been put forward by Kimmerle \cite{Kimmerle2006}. Recall that the \textit{prime graph} of a, not necessarily finite, group $X$ is the undirected loop-free graph whose vertices are labeled by the primes appearing as order of elements in $X$ and two vertices $p$ and $q$ are connected by an edge if and only if there is an element of order $pq$ in $X$.

\begin{quote}\textbf{Prime Graph Question:} Do $\V(\ZZ G)$ and $G$ have the same prime graph?\end{quote}

It has been observed already in the ground laying work of G. Higman that the vertices of the prime graphs of $\V(\ZZ G)$ and $G$ coincide \cite{Higman1940Thesis} and it was shown later that this is even true for the exponents of $G$ and $\V(\ZZ G)$ \cite{CohnLivingstone}. In other words if $\V(\ZZ G)$ contains an element of order $p^n$, for some prime $p$, then $G$ contains an element of order $p^n$. But the behavior of units of mixed order, i.e. not of prime power order, remains mysterious for non-solvable groups and this paper is a contribution to its understanding. Our main result states that the Prime Graph Question has a positive answer locally around a vertex $p$, if a Sylow $p$-subgroup of $G$ is of order $p$.

\begin{theorem}\label{main_theorem}
Let $G$ be a finite group and let $p$ be a prime. Assume that $G$ has a Sylow $p$-subgroup which is of order $p$. Then, for any prime $q$, there is an element of order $pq$ in $V(\ZZ G)$ if and only if there is an element of order $pq$ in $G$.
\end{theorem}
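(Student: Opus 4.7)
The forward direction is immediate since any $g\in G$ of order $pq$ lies in $\V(\ZZ G)$. For the converse, my plan is to argue by contradiction: assume $u\in \V(\ZZ G)$ has order $pq$ and that $G$ has no element of order $pq$. By Berman--Higman, $\varepsilon_1(u)=0$, and by Hertweck's theorem on the support of partial augmentations, every class $C$ with $\varepsilon_C(u)\neq 0$ consists of elements of order dividing $pq$; under the assumption, only $p$-classes and $q$-classes survive. Writing $a_i=\varepsilon_{x_i^G}(u)$ on representatives $x_i$ of the $p$-classes and $b_j=\varepsilon_{y_j^G}(u)$ on representatives $y_j$ of the $q$-classes gives $\sum_i a_i+\sum_j b_j=1$. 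The powers $u^q$ and $u^p$ are units of prime orders $p$ and $q$, so by the Marciniak--Ritter--Sehgal--Weiss theorem their partial augmentations $\tilde a_i,\tilde b_j$ are non-negative integers summing to $1$ on the relevant classes.

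The main engine is the Luthar--Passi--Hertweck method: for every $\chi\in\Irr(G)$ and every $pq$-th root of unity $\zeta$, the multiplicity of $\zeta$ as an eigenvalue of a representation affording $\chi$ evaluated at $u$ is a non-negative integer, equal to an explicit $\QQ$-linear combination of $\chi(1),\chi(u),\chi(u^p),\chi(u^q)$. Expanding each character value through partial augmentations turns these HeLP conditions into non-negativity constraints on linear functions of $a_i,b_j,\tilde a_i,\tilde b_j$ with coefficients given by $\chi(x_i)$ and $\chi(y_j)$. The hypothesis $|P|=p$ restricts every $p$-block of $G$ to defect $0$ or defect $1$: defect-$0$ blocks supply only constraints in the $q$-variables $b_j,\tilde b_j$, since their characters vanish at $p$-singular elements, and each defect-$1$ block $B$ comes with a Brauer tree $T_B$ that combinatorially controls the values $\chi(x_i)$ for $\chi\in B$ through generalized decomposition numbers and the relation of $B$ to $N_G(P)$.

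The technical heart of the proof is to exploit the Brauer-tree structure block by block. For each defect-$1$ block $B$ I would assemble the HeLP constraints attached to the characters in $B$ into a suitable signed sum, with signs dictated by the planar embedding of $T_B$, so that the generalized decomposition numbers telescope and one extracts a sharp relation involving only the $a_i$ (and $\tilde a_i$) and intrinsic data of $T_B$. Combining these block-level relations with the defect-$0$ constraints and the normalization $\sum_i a_i+\sum_j b_j=1$, the system should be infeasible unless some $a_i$ or $b_j$ is forced to be supported on a class of order $pq$ in $G$, giving the required contradiction. The main obstacle is precisely this combinatorial telescoping: a uniform description of generalized decomposition numbers in defect-$1$ blocks is needed for the argument to go through in full generality, and this is where Young tableaux combinatorics and Littlewood--Richardson coefficients are expected to enter, supplying a clean combinatorial language for the decomposition data in the critical families of blocks.
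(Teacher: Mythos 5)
Your forward direction and the opening reductions (Berman--Higman, Hertweck's vanishing of partial augmentations, $\sum_i a_i + \sum_j b_j = 1$) are all fine, and you correctly anticipate that Brauer trees, signed sums over the tree, and Young-tableaux/Littlewood--Richardson combinatorics will be relevant. But the proposal has serious gaps and mischaracterizes where the combinatorics actually enters.

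First, the claim that the Marciniak--Ritter--Sehgal--Weiss theorem gives non-negative partial augmentations for $u^q$ and $u^p$ is not justified. That theorem concerns $p$-adic conjugacy of $p$-subgroups of $\V(\ZZ G)$ under a $p$-solvability hypothesis on $G$; it does not assert non-negativity of partial augmentations of units of prime order in a general finite group, and the paper neither uses nor needs such a statement. All the paper uses about $u^q$ (the $p$-part) is that the sum of its partial augmentations over $p$-classes equals $1$ (from the augmentation and Berman--Higman), together with the congruences in Lemma~\ref{congrueneces_pclasses}.

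Second, and more fundamentally, your proposal never gets to the actual mechanism that produces the contradiction. The paper works \emph{only} with the principal $p$-block (not block by block): the principal block is distinguished because it contains the trivial character, which plays the role of the leaf $\chi_1$. It defines $\nu = \delta_{e+1}\chi_{e+1} + t\sum_{i=1}^{e}\delta_i\chi_i$ (using the bipartite sign labeling of the Brauer tree, with $\chi_{e+1}$ the sum of exceptionals, cf. Lemma~\ref{CharacterFromulaBrauerTree} and Corollary~\ref{cor_GaloisRep}), and the structure of the whole proof is: (a) show $\nu$ vanishes on $p$-regular group elements from the Brauer tree alternating sum, hence on $p$-regular units by expanding through partial augmentations; (b) derive the key two-sided bound $-p < \nu(y) < p$ for a $p$-element $y$ by combining the eigenvalue-multiplicity inequality of Theorem~\ref{main_inequality} with Proposition~\ref{LuPa} and the congruences from Lemma~\ref{congrueneces_pclasses}; (c) conclude $\nu(y) = 0$ via the $p$-rational congruence $\nu(y)\equiv\nu(1)\bmod p$; (d) extend to all $p$-singular elements via Burnside's transfer and the standard value identity $\chi(h_ph_{p'})=\chi(h_p)$ for cyclic defect; (e) contradict linear independence of the characters $\chi_1,\ldots,\chi_{e+1}$ over $\QQ(\zeta_{|G|_{p'}})$. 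Your ``telescoping of generalized decomposition numbers'' has no concrete counterpart here: the Young-tableaux and Littlewood--Richardson machinery in the paper is not bookkeeping for decomposition numbers but is used, via Proposition~\ref{lattice_method} and the structure of $F C_p$-modules, to bound \emph{eigenvalue multiplicities} $\mu(\xi\zeta_p, u, \chi)$ for $\chi$ running through the Brauer tree (Propositions~\ref{negative_node}, \ref{positive_node}, \ref{general_tree}, culminating in Theorem~\ref{main_inequality}). Without an analogue of Theorem~\ref{main_inequality}, the HeLP constraints you list are not strong enough, and indeed the paper itself exhibits a case (O'Nan, order $21$) where bare HeLP yields a feasible system.

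Finally, you give no treatment of the exceptional vertex. Handling it requires showing that $\theta_1+\cdots+\theta_t$ is realizable by a simple $RG$-module (Lemma~\ref{lemma_GaloisRep} and Corollary~\ref{cor_GaloisRep}, via Fong's Schur-index result and Galois descent); this is an essential ingredient, without which the tree-wide inequality cannot be assembled.
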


In contrast to other problems in the field, in particular the Spectrum Problem, a reduction theorem is available for the Prime Graph Question \cite{KimmerleKonovalov}. It states that the Prime Graph Question has a positive answer for a group $G$ if it has a positive answer for all almost simple images of $G$. Recall that a group $G$ is called \textit{almost simple} if it is sandwiched between a non-abelian simple group and its automorphism group, i.e. there is a non-abelian simple group $S$ such that $S \cong \text{Inn}(S) \leq G \leq \text{Aut}(S)$. In this case $S$ is called the \textit{socle} of $G$.

First studies of the Prime Graph Question were based on an algorithmic character-theoretic method. This method was used to answer the Prime Graph Question positively for 13 sporadic simple groups in a series of papers by Bovdi, Konovalov and several coauthors between 2007 and 2012, cf. e.g. \cite{BovdiKonovalovM24, BovdiKonovalovONan}, and their automorphism groups \cite{KimmerleKonovalov15}. By the same method the problem was answered for the simple groups $\PSL(2, p)$ \cite{HertweckBrauer} and any almost simple group having $\PSL(2,p)$ or $\PSL(2,p^2)$ as a socle \cite{BachleMargolis4primaryI}, where $p$ denotes a prime. In combination with the method used in the present paper it was also used to obtain a positive answer for 5 more sporadic simple groups and their automorphism groups \cite{MargolisConway, BachleMargolisSymmetric} and for several almost simple groups with the socle being a simple group of Lie type \cite{KimmerleKonovalov, BachleMargolis4primaryI, BachleMargolis4primaryII}. Recently the problem has been solved for almost simple groups with alternating socle \cite{BachleMargolisSymmetric} and our result is in fact a generalization of the strategy in the last mentioned paper.

Using Theorem~\ref{main_theorem} we obtain an answer for 24 sporadic simple groups and their automorphism groups. Though for many of these groups the Prime Graph Question has been known to hold before, the proofs often relied on calculations which could only be carried out by a computer.

\begin{corollary}\label{sporadic}
Let $G$ be an almost simple group with socle a sporadic simple group $S$. If $S$ is not the O'Nan or the Monster simple group, then the Prime Graph Question holds for $G$. 
\end{corollary}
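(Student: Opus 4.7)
The plan is to combine Theorem~\ref{main_theorem} with the Kimmerle--Konovalov reduction and with previously obtained HeLP-based results from the literature. By \cite{KimmerleKonovalov}, the Prime Graph Question for an arbitrary group follows as soon as it is settled for every almost simple image, so we may assume $G$ itself is almost simple with sporadic socle $S$. Since $|\Out(S)| \in \{1,2\}$ for every sporadic $S$, we are reduced to handling $G \in \{S, \Aut(S)\}$ for the $24$ sporadic simple groups under consideration.

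For each such $G$, fix a pair of distinct primes $p, q$ dividing $|G|$ for which $G$ has no element of order $pq$; the task is to exclude a normalized unit of order $pq$ in $\V(\ZZ G)$. A direct inspection of the ATLAS shows that for each of the $24$ groups and each non-edge $(p,q)$ of the prime graph of $G$, one of the following holds: either (i) at least one of $p, q$ divides $|G|$ to the first power, in which case Theorem~\ref{main_theorem} immediately forbids a unit of order $pq$ in $\V(\ZZ G)$; or (ii) the pair has already been disposed of via the HeLP method or its refinements in the papers cited in the introduction, notably \cite{BovdiKonovalovM24, KimmerleKonovalov15, MargolisConway, BachleMargolisSymmetric}.

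The main work therefore lies in a systematic, case-by-case verification. For each sporadic $S$ different from the O'Nan and Monster groups, and for each $G \in \{S, \Aut(S)\}$, one reads off from the ATLAS the multiplicities of the primes dividing $|G|$ together with the set of element orders of $G$, enumerates the non-edges of the prime graph, and matches each non-edge to either Theorem~\ref{main_theorem} or to a prior result. The principal obstacle is precisely this bookkeeping: one must be certain that no non-edge is left unaddressed. The exclusion of O'Nan and Monster is natural, since each of these groups admits non-edges $(p,q)$ of its prime graph with both $p^2 \mid |G|$ and $q^2 \mid |G|$, placing them outside the scope of Theorem~\ref{main_theorem} and, at present, beyond the reach of the existing character-theoretic techniques.
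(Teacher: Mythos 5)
Your strategy mirrors the paper's: verify by inspection of the ATLAS that for almost all non-edges $(p,q)$ of the prime graph of $G$, one of $p,q$ divides $|G|$ exactly once so that Theorem~\ref{main_theorem} applies, and fall back on character-theoretic (HeLP) arguments for the residual cases. The dichotomy in (i)/(ii) is exactly the skeleton of the paper's proof.

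However, case (ii) as you have stated it contains a potential gap: you assert that every residual non-edge has \emph{already} been disposed of in the cited literature. After applying Theorem~\ref{main_theorem}, the paper finds that only two cases remain, namely units of order $35$ for the Thompson group and for $\Aut(\text{He})$. For $\Aut(\text{He})$ the paper does defer to \cite{KimmerleKonovalov15}, consistent with your plan. But for the Thompson group the paper carries out a fresh HeLP computation in-line (bounding the partial augmentation $\varepsilon_{5a}(u)$ via $\mu(1,u,\chi)$ and $\mu(\zeta_5,u,\chi)$ for the $248$-dimensional character, then contradicting the congruences from Lemma~\ref{congrueneces_pclasses}); it does not cite a prior source. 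This strongly suggests Thompson's $5\cdot 7$ case was not in fact covered by the Bovdi--Konovalov series of papers, so your blanket appeal to the literature would leave this case unresolved. Moreover, the paper's own remark warns that one claimed HeLP result from that series (O'Nan, order $21$) could not be confirmed, so citation without verification is risky here. To close your argument, you should explicitly identify the residual non-edges after applying Theorem~\ref{main_theorem}, and for the Thompson case supply (or at least sketch) the HeLP exclusion of order~$35$ rather than cite it. Finally, the opening invocation of the Kimmerle--Konovalov reduction is superfluous: the corollary already takes $G$ to be almost simple with sporadic socle, so there is nothing to reduce.
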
 

If one wants to determine whether Theorem~\ref{main_theorem} is sufficient to answer the Prime Graph Question for almost simple groups of Lie type over a field with $q$ elements, one naturally runs into the question if the product of certain cyclotomic polynomials evaluated at $q$ is a squarefree number, cf. Lemmas~\ref{lemma:PSL4}-\ref{lemma:G2}. The questions if a given integer polynomial has infinitely many squarefree values remains unsolved in full generality. However, in some situation this type of question can be solved. This allows us to answer the Prime Graph Questions for several infinite series' of almost simple groups of Lie-type. The proof of the number-theoretical result on squarefree values of polynomials crucial for our applications was provided to us by Roger Heath-Brown to whom we are very grateful. 
%The proof of the number-theoretical result we need is given in Section~\ref{sec_numbertheory}. It was provided to us by Roger Heath-Brown to whom we are deeply indebted for his help.
%More precisely, up to certain factors, the order of a finite simple group of Lie-type $G$ over a field with $p^f$ elements is a product of the evaluation of certain cyclotomic polynomials at $p^f$. If we want to apply Theorem~\ref{main_theorem} to prove the Prime Graph Question for $G$ we have to decide if the values of some of these cyclotomic polynomials are squarefree at prime powers. Here by squarefree we mean that a number is not divisible by the square of any prime. 

%In some particular cases these are however known and we show that Theorem~\ref{main_theorem} can answer the Prime Graph Question for at least one new infinite series of such groups.

\begin{corollary}\label{Series}
There are infinitely many primes $p$ such that the Prime Graph Question has a positive answer for any almost simple group having one of the following groups as its socle: $\PSL(4,p)$, $\operatorname{PSU}(4,p)$, $\operatorname{PSp}(4,p)$, $\operatorname{PSp}(6,p)$, $\operatorname{P\Omega}(7,p)$, $\operatorname{P\Omega}^+(8,p)$ or $G_2(p)$.
%For each of the following series of simple groups there are infinitely many primes $p$ such that the Prime Graph Question has a positive answer for an almost simple group with socle of the given type: $\operatorname{PSp}(4,p)$, $\operatorname{P\Omega}_+(8,p)$.
% $\operatorname{PSp}(4,p)$, $\operatorname{PSp}(6,p)$, $\operatorname{PSU}(4,p)$, $\operatorname{P\Omega}_7(p)$, $\operatorname{P\Omega}_8^+(p)$, $G_2(p)$.
\end{corollary}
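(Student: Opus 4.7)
The plan is to combine the family-specific Lemmas~\ref{lemma:PSL4}--\ref{lemma:G2} (which themselves rest on Theorem~\ref{main_theorem}) with a number-theoretic input on squarefree values of polynomials. As a preliminary step I would invoke the Kimmerle--Konovalov reduction, so that it suffices to verify the Prime Graph Question for the almost simple images of a group. Thus one only needs to treat every almost simple $G$ whose socle is one of the listed simple groups $S \in \{\PSL(4,p), \operatorname{PSU}(4,p), \operatorname{PSp}(4,p), \operatorname{PSp}(6,p), \operatorname{P\Omega}(7,p), \operatorname{P\Omega}^+(8,p), G_2(p)\}$.

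Granting the Lemmas~\ref{lemma:PSL4}--\ref{lemma:G2}, each of them establishes, for its corresponding family, a conditional statement of the form: if a certain integer polynomial $f_S(x)$ -- a product of specific cyclotomic polynomials coming from the factorisation of $|S|$ -- takes a squarefree value at $x=p$, then the Prime Graph Question holds for every almost simple group with socle $S$. The reason such a squarefreeness hypothesis is enough is precisely Theorem~\ref{main_theorem}: when $f_S(p)$ is squarefree, every non-defining prime divisor $\ell$ of $|S|$ occurs in $|S|$ with multiplicity $1$, so a Sylow $\ell$-subgroup has order $\ell$, and Theorem~\ref{main_theorem} disposes of every edge of the prime graph of $\V(\ZZ G)$ incident to $\ell$; the remaining edges, involving the defining characteristic $p$ and a handful of small primes that may occur in several cyclotomic factors, are handled inside the lemmas.

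It then remains to show that for each of the seven polynomials $f_S$ there are infinitely many primes $p$ at which $f_S(p)$ is squarefree. For this I would appeal to the Heath-Brown input on squarefree polynomial values mentioned just before the statement, applied separately to each of the seven families. This immediately yields, for each family, an infinite set of primes $p$ for which the hypothesis of the corresponding lemma is satisfied, and hence for which the Prime Graph Question holds for every almost simple group with that socle.

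The principal difficulty is the number-theoretic step: producing infinitely many squarefree values of an integer polynomial is open in full generality, so one must verify that each of the seven polynomials arising here really does fall within the range of applicability of the Heath-Brown result. A secondary, more clerical, difficulty is the case-by-case derivation of the lemmas: for each family one must compute the cyclotomic factorisation of $|S|$, identify the ``exceptional'' primes (the defining characteristic together with those that appear in several cyclotomic factors), and check that the prime-graph edges at those primes can still be controlled by Theorem~\ref{main_theorem} combined with the established character-theoretic methods.
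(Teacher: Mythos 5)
Your proposal captures the overall architecture of the paper's proof (the seven family-specific lemmas plus a squarefree-values input), but there is one genuine gap in the number-theoretic step, and two smaller inaccuracies worth correcting.

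The gap: you propose applying the squarefree-values result ``separately to each of the seven families,'' obtaining an infinite set of admissible primes for each family. But the corollary asserts a single infinite set of primes $p$ that works for \emph{all} seven socles simultaneously, and seven separate positive-density statements do not automatically have an infinite common intersection — a naive union bound fails unless each density constant exceeds $6/7$, which is not guaranteed. The paper sidesteps this by observing that the cyclotomic products appearing in Lemmas~\ref{lemma:PSL4}--\ref{lemma:G2} are all divisors of the single polynomial $(p^2+1)(p^2+p+1)(p^2-p+1)$, which in turn divides $F(p) = (p^2+1)(p^6-1)$; Theorem~\ref{theorem:nt} then proves directly, for this one $F$, that infinitely many primes $p$ give $F(p)$ no square prime divisor $>3$, and this single condition satisfies the hypothesis of all seven lemmas at once. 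Without this unification, your argument does not deliver the stated conclusion.

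Two smaller points. First, invoking the Kimmerle--Konovalov reduction is unnecessary here: the corollary already quantifies over almost simple groups, which is exactly the form to which Theorem~\ref{main_theorem} is applied; the reduction is needed only when one wants to pass from general groups to almost simple images. Second, the assertion that squarefreeness of the cyclotomic product forces every non-defining prime divisor of $|S|$ to have a Sylow subgroup of order $\ell$ is not quite right: factors such as $(q-1)^3$ or $(q+1)^3$ in $|\PSL(4,q)|$ contribute primes with non-cyclic Sylow subgroups, and these primes are instead handled inside the lemmas via subgroups like $\PSL(2,q)\times\PSL(2,q)$, which directly exhibit the needed mixed-order elements. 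Theorem~\ref{main_theorem} is only called upon for the primes dividing the degree-$\geq 2$ cyclotomic factors, and that is where the squarefreeness is used.
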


A sufficient condition for the primes $p$ in Corollary~\ref{Series} is for $(p^2+1)(p^2-p+1)(p^2+p+1)$ to be a squarefree number. This holds e.g. for $124$ of the 168 primes smaller than 1000.

If one knows the order of an almost simple group $G$ and the orders of the elements therein, then one can try to use Theorem~\ref{main_theorem} to obtain an answer to the Prime Graph Question for $G$. This information is available for many almost simple groups in the GAP Character Table Library \cite{CTblLib}. We summarize the results obtainable for those of these groups which had not been studied before or for which the Prime Graph Question is not known by the results mentioned above.

\begin{corollary}\label{cor:Library}
We list the almost simple groups from the GAP character table library which have a socle not isomorphic to a sporadic group, an alternating group or a group of type $\PSL(2,p)$ or $\PSL(2,p^2)$ for $p$ a prime and for which the Prime Graph Question has not been studied before. If a group appears in the left column this means that the Prime Graph Question is solved for each almost simple group with this socle, if it appears in the right column it means there is an almost simple group with this socle for which the Prime Graph Question remains open. 
\end{corollary}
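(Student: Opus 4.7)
The plan is to carry out a direct case analysis over the finite list of almost simple groups stored in the GAP character table library \cite{CTblLib}, after discarding those with sporadic socle, alternating socle, or socle of the form $\PSL(2,p)$ or $\PSL(2,p^2)$, for which the Prime Graph Question has already been handled. For each remaining group $G$ with socle $S$, I would read off from the library the order $|G|$ (hence the set $\pi(G)$ of prime divisors and, for each $p\in\pi(G)$, the order of a Sylow $p$-subgroup) and the list of element orders in $G$.

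With this data the Prime Graph Question becomes a finite pairwise check. For each pair $\{p,q\}$ of distinct primes in $\pi(G)$ one looks whether $G$ contains an element of order $pq$; if it does, the edge $\{p,q\}$ is already in the prime graph of $G$ and nothing has to be shown. If it does not, one must exclude a normalized unit of order $pq$ in $\V(\ZZ G)$, and Theorem~\ref{main_theorem} provides such an exclusion as soon as a Sylow $p$-subgroup or a Sylow $q$-subgroup of $G$ has order $p$ or $q$ respectively, i.e.\ as soon as $p^2 \nmid |G|$ or $q^2 \nmid |G|$. If every missing edge of the prime graph of $G$ is covered by this criterion, $G$ goes into the left column; otherwise there is at least one missing edge $\{p,q\}$ with $p^2 q^2 \mid |G|$ and $G$ goes into the right column.

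I would implement this scheme inside GAP: iterate over the relevant character table names, and, for each $G$, combine the precomputed $|G|$ and \texttt{OrdersClassRepresentatives} into the set of ``bad'' pairs (those with no element of order $pq$), then filter those pairs by the Sylow-order test. A minor care-point, which I expect to be the main technical obstacle rather than a deep obstruction, is the treatment of proper extensions of $S$ inside $\Aut(S)$: the relevant Sylow subgroup sizes and the list of element orders must be read from $|G|$ and from the character table of $G$, not from $S$, since both the prime graph and the Sylow structure can enlarge when $p\mid|\Out(S)|$. Once this bookkeeping is carried out correctly, the split into the two columns is determined in a mechanical way.

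Finally, I would record the output in the table promised by the statement. Any group in the right column is by construction one for which the present technique is insufficient because at least one ``missing'' edge $\{p,q\}$ involves two primes each of which divides $|G|$ to a power at least two; such edges lie outside the scope of Theorem~\ref{main_theorem} and would require orthogonal tools, for instance refined partial-augmentation or HeLP-style arguments applied to the specific $p$- and $q$-Brauer characters of $G$.
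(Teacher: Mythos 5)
Your proposal is correct and is essentially the paper's own method: the paper's entire proof of this corollary is the sentence ``The proof of Corollary~\ref{cor:Library} is a direct application of Theorem~\ref{main_theorem}.'' Your elaboration of the mechanical check---iterating over prime pairs, testing for missing edges, and applying the Sylow-order criterion, with the correct care-point of reading $|G|$ and the element orders from the almost simple group $G$ rather than from the socle $S$---is precisely what such a ``direct application'' must amount to.
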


\begin{center}
%\begin{longtable}{|p{.50\textwidth} | p{.39\textwidth}|} 
\begin{table}[h]
\begin{tabular}{|p{.50\textwidth} | p{.39\textwidth}|}
\hline
{\bf (PQ)} holds by Theorem~\ref{main_theorem} & {\bf (PQ)} not known \\ \hline
$\PSL(3,9)$, $\PSL(4,4)$, $\PSL(4,5)$, $\PSL(4,9)$  & $\PSL(2,125)$, $\PSL(5,3)$ \\
$\PSL(5,2)$, $\PSL(6,2)$, $\PSL(7,2)$, $\PSL(8,2)$ & \\
$\operatorname{PSp}(4,8)$, $\operatorname{PSp}(6,3)$, $\operatorname{PSp}(6,4)$, $\operatorname{PSp}(6,5)$,   & \\
$\operatorname{PSp}(8,2) $, $\operatorname{PSp}(8,3)$, $\operatorname{PSp}(10,2)$, $\operatorname{PSp}(12,2)$ & \\
$\operatorname{PSU}(5,3)$, $\operatorname{PSU}(5,4)$, $\operatorname{PSU}(6,2)$, $\operatorname{PSU}(6,4)$, & $\operatorname{PSU}(3,11)$ \\
$\operatorname{PSU}(7,2)$ & \\
$\operatorname{P\Omega}(7,3)$, $\operatorname{P\Omega}(7,5)$, $\operatorname{P\Omega}^-(8,2)$, $\operatorname{P\Omega}^+(8,3)$,  & \\
$\operatorname{P\Omega}^-(8,3)$, $\operatorname{P\Omega}^+(8,7)$, $\operatorname{P\Omega}(9,3)$, $\operatorname{P\Omega}^+(10,2)$, & \\
$\operatorname{P\Omega}^-(10,2)$, $\operatorname{P\Omega}^-(10,3)$  & \\
$G_2(5)$ & $E_6(2)$, $F_4(2)$, ${}^2G_2(27)$, ${}^2E_6(2)$, ${}^2F_4(8)$ \\
\hline 
\end{tabular}
\caption{Groups from the GAP character table library not studied before. See Table~\ref{table2} for the remaining groups.}
%\end{longtable}
\end{table}
\end{center}

\vspace{-1cm}
For completeness we also include an overview of what can be achieved using Theorem~\ref{main_theorem} for groups from the Character Table Library for which the Prime Graph Question has been studied before, cf. Table~\ref{table2} at the end of the paper.

Theorem~\ref{main_theorem} generalizes \cite[Theorem 1.2]{BachleMargolisSymmetric} which needed an additional assumption on the Brauer tree of the principal $p$-block of $G$ putting this block in a particular Morita equivalence class. 
%With a more homological approach our theorem can also be read as: the Prime Graph Question has a positive answer around a vertex $p$ if the  principal $p$-block of $G$ lies in a particular derived equivalence class. 
The methods we use are based on a method introduced in \cite{BachleMargolisLattice}, inspired by an argument in \cite{HertweckA6}, and further developed in \cite{BachleMargolis4primaryII, MargolisConway, BachleMargolisSymmetric}. Roughly speaking this method tries to obtain a contradiction to the existence of a normalized unit $u$ of a certain order in $\ZZ G$  by studying the possible isomorphism types of simple $G$-modules in characteristic $0$ and $p$ when viewed as $\langle u \rangle$-modules. It turns out that this question can be reformulated in terms of Young tableaux combinatorics and the vanishing of certain Littlewood-Richardson coefficients. Moreover a good understanding of the decomposition behavior of simple $G$-modules when passing from characteristic $0$ to characteristic $p$, as it is the case for blocks of cyclic defect, allows to obtain further restrictions. 

In fact before proving Theorem~\ref{main_theorem} we first obtain a quantitative theorem, Theorem~\ref{main_inequality}, on multiplicities of eigenvalues of units of order $pm$ in $p$-blocks of defect $1$, where $m$ is any integer prime to $p$. The specification of this result to the case of $m$ being prime and the principal block allows the proof of Theorem~\ref{main_theorem}. But Theorem~\ref{main_inequality} can also be used for blocks of defect 1 different from the principal block and for orders of units which are not only products of two primes. 

The paper is structured as follows. In Section~\ref{Section_Prelim} we recall the concepts we need in our proofs. This includes knowledge on torsion units in integral group rings, the module structure of group rings of cyclic groups, Littlewood-Richardson coefficients and their connection to modules of cyclic groups and finally the theory of blocks of cyclic defect. In Section~\ref{Section_Prepar} we prove some preparatory results most of which are of a combinatorial nature. We then apply these results in Section~\ref{Section_Proofs} to obtain Theorems~\ref{main_inequality} and \ref{main_theorem}. In Section~\ref{sec_numbertheory} we prove a number-theoretical result on squarefree values of integer polynomials which will be needed for the proof of Corollary~\ref{Series}. Finally Section~\ref{Section_Applications} contains applications of our result to the study of the Prime Graph Question.

\section{Preliminaries and Notation}\label{Section_Prelim}

We introduce the known facts about units in integral group rings, combinatorics and the representation theory of blocks with cyclic defect which will allow us to obtain the proofs of our main results.

Throughout, $G$ is a finite group, if $g \in G$, then $o(g)$ denotes the order of $g$ and $g^G$ denotes the conjugacy class of $g$ in $G$. If $F/K$ is a finite Galois extension of number fields then $\Tr_{F/K}$ denotes the trace map of $F$ over $K$, i.e. $\Tr_{F/K}(x) = \sum_{\sigma \in \text{Gal}(F/K)} \sigma(x)$. For an integer $n$ we denote by $\zeta_n$ an arbitrary but fixed primitive complex $n$-th root of unity.

\subsection{Units in integral group rings}
Let $D: G \rightarrow \operatorname{GL}_n(R)$ be a representation of $G$ over a commutative ring $R$ of characteristic $0$ with character $\chi$. We can linearly extend $D$ to obtain a ring homomorphism $\mathbb{Z}G \rightarrow \operatorname{M}_n(R)$. Being a ring homomorphism it hence restricts to a representation $D: \mathrm{V}(\mathbb{Z}G) \rightarrow \operatorname{GL}_n(R)$ and $\chi$ also linearly extends to a character of $\mathrm{V}(\mathbb{Z}G)$ which we also denote by $\chi$. So if $u \in \mathbb{Z}G$ is unit of order $n$ then $D(u)$ is a matrix of finite order dividing $n$ and has eigenvalues which are $n$-th roots of unity. For an $n$-th root of unity $\zeta$ we denote by $\mu(\zeta, u, \chi)$ the multiplicity of $\zeta$ as an eigenvalue of $D(u)$.

 We will use a formula due to Luthar and Passi which allows one to calculate the multiplicities of eigenvalues of torsion units under a representation. 

 \begin{proposition}[Luthar, Passi {\cite[Theorem 1]{LP89}}]\label{LuPa}  
 Let $G$ be a finite group and $u \in \V(\ZZ G)$ a torsion unit of order $n$. Let $\zeta$ be a complex $n$-th root of unity. 
 Let $\chi$ be an ordinary character and let $D$ be a representation affording $\chi$. Then 
  \begin{equation*} \mu(\zeta, u, \chi) = \frac{1}{n}\sum_{d \mid n} \operatorname{Tr}_{\QQ(\zeta^d)/\QQ}(\chi(u^d)\zeta^{-d}).\end{equation*} 
 \end{proposition}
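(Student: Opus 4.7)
The plan is to combine a discrete Fourier inversion on the cyclic group $\langle u \rangle$ with a Galois-theoretic reorganization that groups $n$-th roots of unity by their $\Gal(\QQ(\xi)/\QQ)$-orbits. Since $u$ has finite order $n$ and $R$ has characteristic zero, $D(u)$ is semisimple with eigenvalues among the $n$-th roots of unity, and for every integer $j$ the eigenvalues of $D(u^j) = D(u)^j$ are the $j$-th powers of those of $D(u)$.

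Fix a primitive $n$-th root of unity $\xi$, and for $k \in \{0, 1, \dots, n-1\}$ set $m_k := \mu(\xi^k, u, \chi)$. Counting eigenvalues with multiplicity gives $\chi(u^j) = \sum_{k=0}^{n-1} m_k\, \xi^{kj}$ for every $j \in \ZZ$, and the standard orthogonality relations for characters of $\ZZ/n\ZZ$ invert this to
\[ m_k \;=\; \frac{1}{n}\sum_{j=0}^{n-1}\chi(u^j)\,\xi^{-kj}. \]

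The next step is to partition the summation on the right according to $d := \gcd(j, n)$. The residues $j \in \{0, 1, \dots, n-1\}$ with $\gcd(j, n) = d$ biject with $(\ZZ/(n/d)\ZZ)^\times$ via $j = dj'$; since $\xi^d$ is a primitive $(n/d)$-th root of unity, each such $j'$ determines a unique automorphism $\sigma_{j'} \in \Gal(\QQ(\xi^d)/\QQ)$ sending $\xi^d \mapsto \xi^{dj'}$, and every element of that Galois group arises in this way exactly once. The crucial observation is that $\chi(u^{dj'}) = \sigma_{j'}(\chi(u^d))$: writing $\chi(u^d)$ as the sum of eigenvalues of $D(u^d)$ — which are $(n/d)$-th roots of unity, hence powers of $\xi^d$ — and noting that the eigenvalues of $D(u^d)^{j'}$ are their $j'$-th powers exhibits both sides as the same element of $\QQ(\xi^d)$. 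Since also $\xi^{-kdj'} = \sigma_{j'}(\xi^{-kd})$, the inner sum collapses into a trace:
\[ \sum_{\substack{0 \leq j < n \\ \gcd(j, n) = d}} \chi(u^j)\,\xi^{-kj} \;=\; \sum_{\sigma \in \Gal(\QQ(\xi^d)/\QQ)} \sigma(\chi(u^d)\,\xi^{-kd}) \;=\; \Tr_{\QQ(\xi^d)/\QQ}(\chi(u^d)\,\xi^{-kd}). \]
Summing over the divisors $d$ of $n$ and setting $\zeta = \xi^k$, so that $\zeta^{-d} = \xi^{-kd}$ and $\QQ(\zeta^d) = \QQ(\xi^d)$ when $\zeta$ is a primitive $n$-th root of unity, yields the asserted identity.

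The only step that is not routine bookkeeping is the Galois identification $\chi(u^{dj'}) = \sigma_{j'}(\chi(u^d))$, which is what packages $\varphi(n/d)$ separate Fourier-inversion terms into a single trace; this is the only place where anything beyond abelian-group combinatorics is used. For a non-primitive $\zeta$ of order $m < n$, the same argument applies verbatim with $m$ in place of $n$, giving the corresponding formula over $\QQ(\zeta^d)$.
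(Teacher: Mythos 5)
The paper does not prove this statement; it is cited directly from Luthar--Passi \cite[Theorem~1]{LP89}, so there is no in-paper proof to compare against. Your argument is the standard one and is correct: diagonalize $D(u)$ over $\mathbb{C}$, write $\chi(u^j)=\sum_k m_k\xi^{kj}$, invert by orthogonality on $\ZZ/n\ZZ$, regroup the inversion sum by $d=\gcd(j,n)$, and identify each residue class's contribution as a Galois trace over $\QQ(\xi^d)=\QQ(\zeta_{n/d})$. The one step that deserves scrutiny, the Galois identity $\chi(u^{dj'})=\sigma_{j'}(\chi(u^d))$, holds because $\chi(u^d)=\sum_k m_k(\xi^d)^k$ is manifestly an element of $\QQ(\xi^d)$ and $\sigma_{j'}$ acts by $(\xi^d)^k\mapsto(\xi^{dj'})^k$; you have spelled this out, and the bijection between $\{j:\gcd(j,n)=d\}$ and $(\ZZ/(n/d)\ZZ)^\times\cong\Gal(\QQ(\xi^d)/\QQ)$ is also correctly invoked.

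One small caveat about your last paragraph. As written, the formula's $\QQ(\zeta^d)$ should be read as $\QQ(\xi^d)=\QQ(\zeta_{n/d})$ for a fixed primitive $n$-th root $\xi$ (this is the standard abuse in the Luthar--Passi literature, and it is the interpretation the paper uses in its applications, e.g.\ when computing $\mu(1,u,\chi)$ for $u$ of order $35$). With that reading, your main derivation already yields the statement for \emph{every} $\zeta=\xi^k$, primitive or not, since $m_k=\frac{1}{n}\sum_{d\mid n}\Tr_{\QQ(\xi^d)/\QQ}(\chi(u^d)\xi^{-kd})$ holds for all $k$. Your closing suggestion to ``rerun the argument with $m$ in place of $n$'' for a non-primitive $\zeta$ of order $m$ is both unnecessary and not quite right: the eigenvalues of $D(u)$ still range over $n$-th (not $m$-th) roots of unity, so replacing $n$ by $m$ does not reproduce the stated sum over $d\mid n$ with prefactor $1/n$. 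Simply delete that sentence and note instead that the displayed inversion covers all $k$ simultaneously, with $\QQ(\zeta^d)$ understood as $\QQ(\xi^d)$.
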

 
 Quite some information about a torsion unit is encoded in its partial augmentations: For an element $u = \sum_{x \in G} u_x x \in \ZZ G$ we denote by $\varepsilon(u) =  \sum_{x \in G} u_x$ the \textit{augmentation} of $u$ and for a conjugacy class $g^G$ of $G$, 
 $$\varepsilon_{g}(u) = \sum_{x \in g^{G}} u_x$$
  denotes the \emph{partial augmentation} of $u$ at the conjugacy class $g^G$. 
  
Using partial augmentations we then obtain for an ordinary character $\chi$ and $u \in \V(\mathbb{Z}G)$ that
$$\chi(u) = \sum_{g^G} \varepsilon_g(u) \chi(g) $$
where the sum runs over all the conjugacy classes of $G$.
  
 Certain partial augmentations of torsion units in $\ZZ G$ are known to vanish.
\begin{lemma}\label{lemma_partaugs}
Let $u \in \V(\ZZ G)$ be of order $n$. Then
\begin{enumerate}
\item  $\varepsilon_1(u) = 0$ if $u \not= 1$ (Berman-Higman Theorem) \cite[Proposition 1.5.1]{GRG1} and 
\item  $\varepsilon_{g}(u) = 0$, whenever $n$ is not divisible by the order of $g$ \cite[Theorem 2.3]{HertweckBrauer}.
\end{enumerate}
\end{lemma}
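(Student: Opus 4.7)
The plan is to prove the two statements by distinct mechanisms: the first is classical and follows from a single trace computation in the regular representation, while the second requires passage to $p$-adic completions and a Jordan-type decomposition of torsion units.

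For (1), I would use the left regular representation $\rho \colon \ZZ G \to \mathrm{M}_{|G|}(\ZZ)$, where $x\in G$ acts by left multiplication on the basis $G$ of $\ZZ G$. A direct count shows $\operatorname{tr}(\rho(u)) = |G|\,\varepsilon_1(u)$, because $\rho(x)$ is a fixed-point-free permutation matrix for every $x \neq 1$. If $u \in \V(\ZZ G)$ has finite order $n$, then $\rho(u) \in \operatorname{GL}_{|G|}(\mathbb{C})$ is diagonalisable with eigenvalues that are $n$-th roots of unity, hence $|\operatorname{tr}(\rho(u))| \leq |G|$. Therefore $|\varepsilon_1(u)| \leq 1$, and since $\varepsilon_1(u) \in \ZZ$ it lies in $\{-1,0,1\}$. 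The value $+1$ forces every eigenvalue of $\rho(u)$ to equal $1$, giving $\rho(u) = \mathrm{Id}$ and, by faithfulness of $\rho$, $u = 1$. The value $-1$ analogously forces $u = -1$, which is incompatible with $\varepsilon(u) = 1$. Hence $\varepsilon_1(u) = 0$ whenever $u \neq 1$.

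For (2), I would argue by contrapositive and reduce to a single prime: it suffices to show that if a prime $p$ satisfies $v_p(o(g)) > v_p(n)$, then $\varepsilon_g(u) = 0$. For this I plan to follow Hertweck's approach via the $p$-adic group ring $\ZZ_p G$. The key structural input is that any torsion unit in $\V(\ZZ_p G)$ admits a canonical Jordan-type decomposition $u = u_p \, u_{p'}$ into commuting factors, with $u_p$ a $p$-element of order $p^{v_p(n)}$ and $u_{p'}$ a $p'$-element of order $n/p^{v_p(n)}$. Since $v_p(o(g)) > v_p(o(u_p))$, the $p$-part $g_p$ of $g$ is too large to appear as the $p$-part of any $G$-conjugate of $u_p$, so $g$ cannot lie in the ``spectrum'' of the decomposition $u_p u_{p'}$. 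Translating this spectral obstruction into an identity of partial augmentations --- most cleanly by combining a Berman--Higman-type argument in the $p$-local component with the comparison between ordinary and Brauer characters on $p$-regular classes --- yields $\varepsilon_g(u) = 0$.

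The hard part is (2). Part (1) is essentially a one-line trace inequality, whereas (2) rests on the delicate machinery of lifting idempotents from $\mathbb{F}_p G$ to $\ZZ_p G$, controlling the orders of $p$-modular reductions, and keeping track of partial augmentations under the $p$-local decomposition. This is the technical heart of Hertweck's theorem; without this machinery one can only establish weaker statements, such as that $\varepsilon_g(u) = 0$ when $g$ has prime order coprime to $n$.
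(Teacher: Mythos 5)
The paper provides no proof of this lemma; it is a compilation of two known results, each with a citation, so there is no internal argument to compare against. Assessing your proposal on its own terms: part~(1) is correct and is precisely the classical Berman--Higman argument given in the cited reference (trace in the regular representation, bound $|\operatorname{tr}\rho(u)| \le |G|$ with equality forcing all eigenvalues equal, and normalization ruling out $\varepsilon_1(u) = -1$).

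Part~(2) is not a proof but a description of what a proof would have to accomplish. You correctly identify the skeleton of Hertweck's argument --- reduce to a single prime $p$ with $v_p(o(g)) > v_p(n)$, pass to $\ZZ_p G$, factor $u = u_p\,u_{p'}$ into commuting $p$- and $p'$-parts (note these are powers of $u$, so the factorization already exists over $\ZZ$), and exploit the mismatch between $o(g_p)$ and $o(u_p)$. But the crucial step, turning that mismatch into the conclusion $\varepsilon_g(u) = 0$, is entirely replaced by the phrase ``translating this spectral obstruction into an identity of partial augmentations ... yields $\varepsilon_g(u) = 0$.'' That sentence names the gap rather than filling it. The real content of Hertweck's theorem lies exactly there: one has to relate partial augmentations of $u$ to those of $u_{p'}$ via $p$-modular Brauer characters (which are blind to $p$-parts of group elements), organize the resulting linear relations by the order of the $p$-part of the conjugacy class, and argue inductively that classes whose $p$-part has order exceeding $o(u_p)$ must carry zero partial augmentation. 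None of this is supplied. Your own closing sentence --- that without the machinery one only gets weaker statements --- is an accurate self-assessment that (2) has been outlined, not proved. Since the paper itself only cites Hertweck for this part, your proposal is no less explicit than the source, but it would not stand as a self-contained proof of the lemma.
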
 
 
Partial augmentations of torsion units are also known to satisfy certain congruences, one of which will be useful to us.
 
 \begin{lemma}\label{congrueneces_pclasses}\cite[Lemma 2.2]{BachleMargolisSymmetric}
  Let $p$ be a prime, $u\in V(\ZZ G)$ a torsion unit of order different from $p$ and let $g_1,\ldots, g_k$ be representatives of the conjugacy classes of elements of order $p$ in $G$. Then 
  $$\sum_{i=1}^{k}\varepsilon_{g_{i}}(u)\equiv 0 \mod p.$$
  Consequently if $h_1,\ldots ,h_\ell$ are representatives of the conjugacy classes of elements of $G$ of order different from $p$, then  
  $$\sum_{i=1}^{\ell}\varepsilon_{h_{i}}(u)\equiv 1 \mod p.$$
 \end{lemma}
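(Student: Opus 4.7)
The plan begins with the observation that the two displayed congruences are equivalent: summing all partial augmentations of $u$ gives $\sum_g \varepsilon_g(u) = \varepsilon(u) = 1$, so the second congruence follows from the first by subtraction. Hence I reduce to proving $\sum_{i=1}^k \varepsilon_{g_i}(u) \equiv 0 \pmod p$.

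If $\gcd(p,\operatorname{ord}(u)) = 1$, there is nothing to do: Lemma~\ref{lemma_partaugs}(2) gives $\varepsilon_{g_i}(u) = 0$ for every $i$, because $\operatorname{ord}(g_i) = p$ does not divide $\operatorname{ord}(u)$. So from now on I assume $p \mid \operatorname{ord}(u)$ and $\operatorname{ord}(u) \neq p$. Writing $\operatorname{ord}(u) = p^a m$ with $\gcd(p,m) = 1$ and $a \geq 1$ (so $a \geq 2$ or $m > 1$), choose $s,t \in \ZZ$ with $sp^a + tm = 1$ and set $u_p := u^{tm}$, $u_{p'} := u^{sp^a}$. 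These are commuting torsion units in $\V(\ZZ G)$ of orders $p^a$ and $m$ respectively, with $u = u_p u_{p'}$.

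The heart of the proof will be a Frobenius-style congruence
$$\chi(u) \equiv \chi(u_{p'}) \pmod{\mathfrak p}$$
valid for every ordinary character $\chi$ of $G$ and every prime $\mathfrak p$ of the ring of integers of $\QQ(\chi)$ lying above $p$. Since $u_p$ and $u_{p'}$ commute, the matrices representing them can be simultaneously diagonalised; the eigenvalues of $u_p$ are $p^a$-th roots of unity, which all satisfy $\zeta \equiv 1 \pmod{\mathfrak p}$. Expanding $\chi(u) - \chi(u_{p'}) = \sum_g (\varepsilon_g(u) - \varepsilon_g(u_{p'}))\chi(g)$ gives a mod-$\mathfrak p$ relation among the $\varepsilon_g$'s. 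Since $\operatorname{ord}(u_{p'}) = m$ is coprime to $p$, Lemma~\ref{lemma_partaugs}(2) gives $\varepsilon_{g_i}(u_{p'}) = 0$ for each $i$, so the target sum equals $\sum_i(\varepsilon_{g_i}(u) - \varepsilon_{g_i}(u_{p'}))$, which I want to pin down mod $p$.

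The main obstacle, and the step that will need the most care, is descending from mod $\mathfrak p$ to mod $p$ at conjugacy classes whose centralizers have order divisible by $p$. The naive application of orthogonality $|C_G(g_0)|\varepsilon_{g_0}(v) = \sum_\chi \overline{\chi(g_0)}\chi(v)$ gives $|C_G(g_0)|(\varepsilon_{g_0}(u) - \varepsilon_{g_0}(u_{p'})) \equiv 0 \pmod p$, which is vacuous precisely when $p \mid |C_G(g_0)|$, and this happens for every order-$p$ class. I expect to bypass this by aggregating the order-$p$ classes (so the sum $\sum_i \overline{\chi(g_i)}$ appears as a single coefficient and its mod-$p$ structure can be exploited), or more cleanly by working with Brauer characters: the quantity $\Phi_\varphi(v) := \sum_g \varepsilon_g(v) \varphi(g_{p'})$ for a Brauer character $\varphi$ satisfies $\chi(u) \equiv \sum_\varphi d_{\chi\varphi}\Phi_\varphi(u) \pmod{\mathfrak p}$ via the decomposition matrix, and a judicious choice of $\varphi$ (together with $\Phi_{\varphi_1}(u) = 1$ for the trivial Brauer character) should extract the desired mod-$p$ congruence without the centralizer degeneracy.
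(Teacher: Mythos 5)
Your first two steps are fine, but the step you yourself flag as ``the heart of the proof'' is not resolved, and neither of your proposed escape routes will close it. The congruence $\chi(u)\equiv\chi(u_{p'})\pmod{\mathfrak p}$ is correct but strictly too weak: when $G$ is a $p$-group, every character value is a sum of $p$-power roots of unity and hence $\equiv\chi(1)\pmod{\mathfrak p}$, so each of your congruences collapses to $\chi(1)\equiv\chi(1)$ and says nothing whatsoever about the $\varepsilon_g(u)$ --- yet the lemma is not vacuous for $p$-groups and must be proved there too without invoking Zassenhaus. The Brauer-character reformulation cannot repair this, for a structural reason: the functional $\Phi_\varphi(v)=\sum_g\varepsilon_g(v)\varphi(g_{p'})$ assigns the same weight $\varphi(1)$ to \emph{every} class of $p$-elements (since $g_{p'}=1$ for all of them, $g=1$ included), so linear combinations of the $\Phi_\varphi$ can only ever detect the aggregate $\varepsilon_1(v)+\sum_{o(g)=p}\varepsilon_g(v)+\sum_{o(g)=p^2}\varepsilon_g(v)+\cdots$, never the middle summand in isolation. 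No ``judicious choice of $\varphi$'' can therefore extract $\sum_i\varepsilon_{g_i}(u)\bmod p$ from your congruences.

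The argument in the cited reference avoids characters entirely and is essentially a non-commutative freshman's dream applied inside $\ZZ G$. Write $u=\sum_g a_gg$ and expand $u^p=\sum_{(g_1,\dots,g_p)}a_{g_1}\cdots a_{g_p}\,g_1\cdots g_p$; partition the $p$-tuples into orbits under cyclic shift. The shift fixes the coefficient $a_{g_1}\cdots a_{g_p}$ and conjugates the product $g_1\cdots g_p$, and since $p$ is prime every non-constant tuple lies in an orbit of size $p$. Hence for every conjugacy class $C$ one gets $\varepsilon_C(u^p)\equiv\sum_{g:\,g^p\in C}a_g^{\,p}\equiv\sum_{g:\,g^p\in C}a_g\pmod p$. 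Taking $C=\{1\}$ gives $\varepsilon_1(u^p)\equiv\varepsilon_1(u)+\sum_{i=1}^k\varepsilon_{g_i}(u)\pmod p$, and since $o(u)\notin\{1,p\}$ both $u\ne1$ and $u^p\ne1$, so Berman--Higman annihilates both $\varepsilon_1$ terms and the claim follows. Notice this handles $p\nmid o(u)$ at the same time, so you need neither the factorization $u=u_pu_{p'}$, nor Hertweck's vanishing theorem, nor any $p$-modular system.
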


\subsection{Modules of cyclic groups} 
We recall some well-known facts about modules of modular group algebras of cyclic groups which might be found in many text books on representation theory and are also included in \cite[Proposition 2.2]{BachleMargolisLattice}.

Our first lemma is a reformulation of the Jordan normal form.

\begin{lemma}\label{kC-modules}
Let $C = \langle c \rangle$ be a cyclic group of order $pm$ where $m$ is an integer coprime to $p$ and let $F$ be a field of characteristic $p$ containing a primitive $m$-th root of unity $\xi$. Then any simple $FC$-module $S$ is $1$-dimensional and determined by the action of $c^p$ on $S$ as $\xi^i$ for a certain $i$. Moreover, any indecomposable $FC$-module $I$ is uniserial and has, up to isomorphism, a unique composition factor. Furthermore, the dimension of $I$ is at most $p$. 
\end{lemma}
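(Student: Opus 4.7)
The plan is to exploit the decomposition of $C$ as the internal direct product of its Sylow $p$-subgroup and its $p'$-complement. Set $P = \langle c^m \rangle$ of order $p$ and $H = \langle c^p \rangle$ of order $m$, so that $C = P \times H$ and hence $FC \cong FP \otimes_F FH$ as $F$-algebras. Since $|H| = m$ is invertible in $F$ and $F$ contains a primitive $m$-th root of unity $\xi$, Maschke's theorem together with the splitness of $H$ yields $FH \cong F \times F \times \cdots \times F$ with $m$ factors, the $i$-th idempotent being $e_i = \tfrac{1}{m}\sum_{j=0}^{m-1} \xi^{-ij} c^{pj}$, on which $c^p$ acts as $\xi^i$.

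Tensoring, $FC$ decomposes as a direct product of $m$ blocks $FC \cdot e_i$, each isomorphic to $FP \cong F[x]/(x^p - 1) = F[x]/((x-1)^p)$, where the last equality uses $\operatorname{char}(F) = p$. This truncated polynomial ring is a local principal ideal ring with unique maximal ideal $(x-1)$ and residue field $F$, hence has, up to isomorphism, a unique simple module, which is $1$-dimensional. Pulling back through the isomorphism, the unique simple module of the block $FC \cdot e_i$ is the $1$-dimensional module on which $c^p$ acts as $\xi^i$, proving the first assertion.

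For an arbitrary indecomposable $FC$-module $I$, exactly one of the central idempotents $e_i$ acts as the identity on $I$, so $I$ is an indecomposable module over a single factor $F[x]/((x-1)^p)$. The indecomposable modules of this uniserial local ring are precisely the quotients $F[x]/((x-1)^k)$ for $1 \le k \le p$; each is uniserial, has $F$ as its unique composition factor type, and has dimension at most $p$. This yields the remaining claims.

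I expect the only subtle point to be the classification of indecomposables over $F[x]/((x-1)^p)$; this can be handled either by invoking the structure theorem for finitely generated modules over a principal ideal ring (observing that $F[x]/((x-1)^p)$ is such a ring with nilpotent maximal ideal) or, entirely elementarily, by reducing to the Jordan normal form of the nilpotent operator $c^m - 1$ acting on $I$, viewed as an $F$-vector space, and noting that indecomposability forces a single Jordan block of size at most $p$.
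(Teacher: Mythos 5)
Your proof is correct and takes essentially the approach the paper alludes to: the paper gives no proof, calling the lemma a ``reformulation of the Jordan normal form'' and citing it as well-known, and your block decomposition $FC \cong \prod_i F[x]/((x-1)^p)$ via the splitting $C = P \times H$ is the standard way to make that precise. The final paragraph correctly identifies the Jordan-block reduction as the crux, matching the paper's remark.
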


\textbf{Notation:} If in the situation of the preceding lemma $I$ is an indecomposable $FC$-module, $d = \text{dim}_F(I)$ and $c^p$ acts on a composition factor of $I$ as $\xi^i$, then we denote $I = I_d^i$. \\ 
 
The next proposition follows from \cite[Propositions 2.3, 2.4]{BachleMargolisLattice} and can also be found in \cite[Proposition 2.2]{MargolisConway}.

\begin{proposition}\label{lattice_method}
 Let $C = \langle c \rangle$ be a cyclic group of order $pm$ such that $p$ does not divide $m$. Let $R$ be a complete local ring of characteristic $0$ containing a primitive $m$-th root of unity $\xi$ such that $p$ is contained in the maximal ideal of $R$ and not ramified in $R$. Adopt the bar-notation for reduction modulo the maximal ideal of $R$.
 
 Let $D$ be a representation of $C$ over $R$ such that the eigenvalues of $D(u)$ in an algebraic closure of the quotient field of $R$, with multiplicities, are $\xi A_1,$ $\xi^2 A_2, \ldots ,\xi^m A_m$ for certain multisets $A_i$ consisting of $p$-th roots of unity. Here  $A_i$ might also be empty. Let $\zeta$ be  a non-trivial $p$-th root of unity. 
  Let $M$ be an $RC$-lattice affording the representation $D$. Then $$M\cong M_1\oplus M_2\oplus \cdots \oplus M_m$$ such that for every $i$ we have: If $A_i$ contains $\zeta$ exactly $r$ times and $1$ exactly $s$ times then $$\overline{M_i} \cong aI_p^i \oplus (r-a)I_{p-1}^i \oplus (s-a)I_1^i$$ for some non-negative integer $a\le \min\{r,s\}$.
\end{proposition}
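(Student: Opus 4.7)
The plan is to reduce the statement to the classical classification of indecomposable lattices for a cyclic group of order $p$ over a complete local ring in which $p$ is unramified, by isolating the contribution of the $m'$-part of $C$ first.

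First, I would exploit that $c^p$ is semisimple of order $m$. Since $m$ is coprime to $p$, it is a unit in the local ring $R$, and since $R$ already contains all $m$-th roots of unity, the polynomial $X^m-1$ splits into distinct linear factors over $R$. The orthogonal idempotents $e_i = \tfrac{1}{m}\sum_{j=0}^{m-1} \xi^{-ij}(c^p)^j \in R\langle c^p\rangle$ are therefore well defined, mutually orthogonal, and sum to $1$. They commute with $c$, so $M = \bigoplus_{i=1}^m e_i M =: \bigoplus_{i=1}^m M_i$ is a decomposition into $RC$-sublattices on which $c^p$ acts as a fixed $m$-th root of unity in $R$. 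Matching the indexing with the data $\xi^i A_i$ in the hypothesis amounts to keeping track of which $p$-th root is chosen below.

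Next, on a given summand $M_i$ the scalar action of $c^p$ admits a $p$-th root in $R$: since $\gcd(p,m)=1$ one can pick $k \in \ZZ$ with $kp \equiv 1 \pmod m$ and take $\eta_i$ to be the corresponding $k$-th power of this scalar. Setting $d_i := \eta_i^{-1}c$, one has $d_i^p = 1$ on $M_i$, so $M_i$ becomes an $R\langle d_i\rangle$-lattice for a cyclic group of order exactly $p$. Translating the hypothesis yields that the multiset of eigenvalues of $d_i$ acting on $M_i$ in an algebraic closure of $\operatorname{Frac}(R)$ is exactly $A_i$; in particular $1$ appears $s$ times and $\zeta$ appears $r$ times, and the Galois-invariance observation in the statement forces each non-trivial $p$-th root of unity to appear exactly $r$ times.

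Now I would apply the three-indecomposable classification of $R\langle d_i \rangle$-lattices, which under the assumptions that $R$ is complete local of characteristic $0$ with $p$ in the maximal ideal and unramified lists exactly: the trivial lattice $R$, the rank-$(p-1)$ lattice $R[\zeta_p]$ on which $d_i$ acts as a primitive $p$-th root of unity, and the regular lattice $R\langle d_i\rangle$ of rank $p$. Writing $M_i \cong a \cdot R\langle d_i\rangle \oplus b \cdot R[\zeta_p] \oplus c \cdot R$ with $a,b,c\ge 0$, matching eigenvalue multiplicities of $d_i$ gives $s = a+c$ and $r = a+b$, whence $a \le \min\{r,s\}$, $b = r-a$, $c = s-a$. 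Reducing modulo the maximal ideal of $R$ and using $\Phi_p(X) \equiv (X-1)^{p-1} \pmod p$, these three integral indecomposables reduce to the uniserial $F\langle d_i\rangle$-modules of dimensions $p,\ p-1,\ 1$ respectively; reinterpreting $d_i = \eta_i^{-1} c$ in characteristic $p$ gives the indecomposables $I_p^i,\ I_{p-1}^i,\ I_1^i$ with the indexing convention of the preceding lemma, yielding the claimed form of $\overline{M_i}$. The main obstacle here is precisely the integral classification of indecomposable $R\langle d_i \rangle$-lattices: this is a Diederichsen-Reiner-type result that genuinely relies on $p$ being unramified in $R$, while the remaining steps — the idempotent decomposition, the extraction of a $p$-th root of the $c^p$-scalar, and the cyclotomic reduction mod $p$ — are routine.
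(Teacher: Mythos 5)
Your argument is sound and, as far as I can tell, is exactly the standard one: the paper itself does not prove this proposition but defers to Bächle--Margolis (Props.\ 2.3, 2.4 of the cited lattice paper) and Margolis (Prop.\ 2.2 of the Conway paper), so there is no ``paper proof'' to compare against, and what you have reconstructed is the expected route. The three ingredients you isolate are the right ones: the primitive-idempotent decomposition by the $p'$-part of $c$ (valid because $m$ is a unit in $R$ and $X^m-1$ splits over $R$), the twist by the unique $p$-th root $\eta_i=\xi^{ik}\in R$ of the scalar $\xi^i$ to produce a genuine $R[C_p]$-lattice, and the Reiner--Heller classification of $R[C_p]$-lattices over a complete unramified local ring into exactly $R$, $R[\zeta_p]$, $R[C_p]$, followed by the reduction $\Phi_p(X)\equiv(X-1)^{p-1}\bmod p$ to get dimensions $1,\,p-1,\,p$.

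One point worth making precise, which you flag but leave implicit: with the idempotents $e_i$ built from $c^p$, the eigenvalues of $d_i=\eta_i^{-1}c$ on $M_i=e_iM$ form the multiset $A_{ik\bmod m}$, not $A_i$ (the eigenvalues of $c$ on the $\xi^i$-eigenspace of $c^p$ are $\xi^{ik}A_{ik}$). Equivalently, the summand on which $c$ has eigenvalues $\xi^i A_i$ is the one on which $c^p$ acts as $\xi^{ip}$, whose reduction is built from $I^{\,ip}_d$ rather than $I^{\,i}_d$. Since $i\mapsto ik\bmod m$ is a bijection, this is only a relabeling of the summands $M_1,\dots,M_m$ and the statement is unaffected; but it is exactly the bookkeeping you'd have to pin down to match the proposition's displayed indexing. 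Everything else --- that $e_i$ commutes with $c$ so $e_iM$ is an $RC$-sublattice, that $e_iM$ is $R$-free as a direct summand of a free module over a local ring, that $\eta_i^{-1}$ is a unit so $d_i$ preserves the lattice, and the multiplicity count $s=a+c'$, $r=a+b$ --- is correct.
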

In the previous proposition, note that since the sum of the eigenvalues of $D(u)$ is an element in $R$ we know for every $i$ that if $A_i$ contains $\zeta$ exactly $r$ times, then $A_i$ contains also $\zeta^2,\ldots ,\zeta^{p-1}$ exactly $r$ times.

%\begin{theorem}\label{BrauerPrincipalBlock}
% Let $p$ be a prime, $g$ a $p$-element in $G$ and $h$ an element in $O_{p'}(C_G(g))$. Let $\chi$ be a character in the principal $p$-block of $G$. Then $\chi(g)=\chi(gh)$.
%\end{theorem}

\subsection{Combinatorics}\label{Combinatorics}
We will introduce the combinatorial notation and facts which will allow us to control the representation theory of the modules involved in the proof of our main results. All the basic facts on the objects described here can be found in \cite{Fulton1997}.

Let $d$ be an integer and $\lambda = (\lambda_1, \ldots ,\lambda_r)$ a \emph{partition} of $d$, i.e. the $\lambda_i$ are positive integers such that $\lambda_1 \geq \lambda_2 \geq \ldots \geq \lambda_r$ and $\lambda_1 + \ldots + \lambda_r = d$. A partition $\mu = (\mu_1,\ldots ,\mu_s)$ is called a \emph{subpartition} of $\lambda$ if $s \leq r$ and $\mu_i \leq \lambda_i$ for each $1\leq i \leq r$ where we set $\mu_i = 0$ for $i>s$. A \emph{Young diagram} $S$ associated to $\lambda$ is an arrangement of boxes consisting of $r$ rows such that the first row contains $\lambda_1$ boxes, the second row $\lambda_2$ boxes etc. If we remove from $S$ the leftmost  $\mu_1$ boxes in the first row, the leftmost  $\mu_2$ boxes in the second row etc. we obtain the \emph{skew diagram} associated to $\lambda/\mu$.

If each box of a Young diagram or a skew diagram is assigned an entry from an alphabet, which in our case will always be the positive integers, it is called a \emph{Young tableau} or \emph{skew tableau} respectively. Let $T$ be a Young tableau or skew tableau. $T$ is called \emph{semistandard} if entries in the same row, when read from left to right, are not decreasing and entries in each column, when read from top to bottom, are increasing. We further denote by $w(T)$ the word we obtain from the entries in $T$ when reading them from right to left and from top to bottom. If $b$ is a box in $T$ we write $w(b)$ for the word obtained in this manner when reading until $b$, in particular the entry in $b$ is the last letter of $w(b)$. One says that $T$ satisfies the \emph{lattice property} if for any box $b$ in $T$ the word $w(b)$ contains the letter $1$ at least as many times as the letter $2$, the letter $2$ at least as many times as the letter $3$ etc. For example Figure~\ref{ExampleLatticeProperty} contains a Young tableau $T'$ satisfying the lattice property such that $w(T') = 1 \ 2 \ 1 \ 3 \ 2 \ 4 \ 3 \ 5$ and a skew tableau $T$ satisfying the lattice property such that $w(T) = 1 \ 1 \ 2 \ 1 \ 3 \ 2 \ 4$. Young tableaux and skew tableaux we will deal with will always be semistandard and satisfy the lattice property.

\[
\begin{tikzpicture}[inner sep=0in,outer sep=0in]

\node (n) {\begin{varwidth}{5cm}{
 
\ytableausetup{notabloids}
\text{$T'$ \ = \ }
\begin{ytableau}     
 1 & 2 & 1   \\           
 2 & 3   \\                
 3 & 4 \\
 5  \\                    
\end{ytableau}
\hspace{-8cm}
\text{$T$ \ = \ } 
\begin{ytableau}
 \none & 1 & 1   \\
 1 & 2   \\
 2 & 3 \\
 4  \\
\end{ytableau}}\end{varwidth}};

\end{tikzpicture}
\]
\captionof{figure}{Illustration Young tableaux and lattice property.}
\label{ExampleLatticeProperty}

\vspace{0.5cm}

If $T$ is semistandard and satisfies the lattice property such that $w(T)$ contains exactly $\nu_1$ times the letter $1$, $\nu_2$ times the letter $2$ etc. then $\nu = (\nu_1,\ldots ,\nu_t)$ is the \emph{content} of $T$. Note that the content of $T$ is a partition, as $T$ satisfies the lattice property. If $\mu$ is a subpartition of $\lambda$ and $\nu$ some partition then the number of ways the skew diagram $\lambda /\mu$ can be made into a skew tableau with content $\nu$ is known as the \emph{Littlewood-Richardson coefficient} of $\lambda$, $\mu$ and $\nu$ which is denoted by $c_{\mu, \nu}^{\lambda}$. Littlewood-Richardson coefficients have been intensively studied by many authors, but we will only rely on the fact that $c_{\mu, \nu}^{\lambda}$ is symmetric in $\mu$ and $\nu$, i.e. $c_{\mu, \nu}^{\lambda} = c_{\nu, \mu}^{\lambda}$ \cite[Section 5.2, Corollary 2]{Fulton1997}.

\setlength{\parindent}{12pt}

These combinatorial notions come into our picture in the following way. If $C$ is a cyclic group of order $p$ and $F$ is a field of characteristic $p$ then by Lemma~\ref{kC-modules} an $FC$-module $M$ is described up to isomorphisms by the dimensions of its indecomposable direct summands, say $d_1$,\ldots ,$d_r$. So if we arrange these dimensions in a non-increasing way then $(d_1,\ldots ,d_r)$ is a partition of $dim_F(M)$. We call this the \emph{partition associated to} $M$. This description is useful to study submodules and quotients of $M$ via the following fundamental result.

\begin{theorem}\cite[Theorem 9]{BachleMargolis4primaryII}
Let $F$ be a field of characteristic $p$, $C$ a cyclic group of order $p$ and $M$, $U$ and $Q$ be $FC$-modules. Let $\lambda$, $\mu$ and $\nu$ be the partitions associated to $M$, $U$ and $Q$ respectively. Then $M$ has a submodule isomorphic to $U$ such that the quotient by this module is isomorphic to $Q$ if and only if $c^\lambda_{\mu, \nu} \neq 0$. 
\end{theorem}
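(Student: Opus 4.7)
The plan is to identify the statement with a classical theorem on submodules of nilpotent modules. First I would identify $FC$ with $F[x]/(x^p)$ via $c\mapsto 1+x$. Under this identification an $FC$-module is an $F$-vector space equipped with a nilpotent endomorphism $T$ with $T^p=0$, and the partition associated to the module is precisely the Jordan type of $T$. The theorem thus becomes the assertion that a module of Jordan type $\lambda$ over $F[x]/(x^p)$ has a submodule of type $\mu$ with quotient of type $\nu$ if and only if $c^\lambda_{\mu,\nu}\neq 0$. This is independent of the specific field $F$ of characteristic $p$ one chooses, since all the relevant data (a module of prescribed Jordan type together with a nilpotent-stable subspace realising the prescribed sub- and quotient types) is described by polynomial equations over the prime field $\mathbb{F}_p$.

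After reducing to $F=\mathbb{F}_q$, the number of submodules of $M$ of Jordan type $\mu$ with quotient of Jordan type $\nu$ equals the Hall polynomial $g^\lambda_{\mu,\nu}(q)$. The central input I would import from the classical theory of Hall algebras (cf.\ Macdonald, \emph{Symmetric Functions and Hall Polynomials}, Ch.~II) is that $g^\lambda_{\mu,\nu}(t)\in\mathbb{Z}[t]$ is a polynomial in $t$ whose leading coefficient equals the Littlewood--Richardson coefficient $c^\lambda_{\mu,\nu}$.

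Both directions of the theorem then follow at once. If a submodule exists over some $\mathbb{F}_q$ then $g^\lambda_{\mu,\nu}(q)\geq 1$, so the polynomial is non-zero and hence its leading coefficient $c^\lambda_{\mu,\nu}$ is non-zero. Conversely, if $c^\lambda_{\mu,\nu}\neq 0$ then $g^\lambda_{\mu,\nu}(q)>0$ for all sufficiently large prime powers $q$, producing a submodule over a suitable finite field, which then gives a submodule over any larger $F$ by extension of scalars.

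The main obstacle is the identification of the leading coefficient of the Hall polynomial with $c^\lambda_{\mu,\nu}$: this is the combinatorial heart of the theory and is proved by realising the Hall algebra of nilpotent modules over a discrete valuation ring as the ring of symmetric functions in the Hall--Littlewood basis, where the structure constants degenerate as $q\to\infty$ to the Littlewood--Richardson coefficients. A more self-contained alternative I would consider is to bypass Hall polynomials entirely and construct $U$ directly from a semistandard skew tableau $T_0$ of shape $\lambda/\mu$ with content $\nu$ satisfying the lattice property: writing $M=\bigoplus_j M_j$ as a direct sum of cyclic submodules corresponding to the parts of $\lambda$, one uses the entries of $T_0$ as a recipe for generators, inducting on the number of distinct entries by peeling off the boxes labelled with the largest entry $k$ (which form a horizontal strip by semistandardness) and invoking the lattice property to verify that at each step the Jordan types of submodule and quotient match the prescription.
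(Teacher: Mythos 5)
The statement under review is only \emph{cited} in this paper (to \cite[Theorem~9]{BachleMargolis4primaryII}); no proof appears here, so there is no in-paper argument to compare against, and I assess your proposal on its own terms.

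Your translation to nilpotent endomorphisms via $c\mapsto 1+x$ and the resulting identification of the associated partition with Jordan type are correct, and the route through Hall polynomials is legitimate. The fact you import from Macdonald --- that $g^\lambda_{\mu\nu}(t)$ has degree $n(\lambda)-n(\mu)-n(\nu)$ and leading coefficient $c^\lambda_{\mu\nu}$ --- is indeed there (Ch.\ II, (4.3)). Two problems, one cosmetic and one substantive, remain.

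The cosmetic one: ``described by polynomial equations over $\mathbb{F}_p$'' is not accurate. Prescribing that a $T$-stable subspace has \emph{exactly} Jordan type $\mu$ and its quotient \emph{exactly} $\nu$ imposes rank equalities, i.e.\ both vanishing and non-vanishing of minors; the locus is constructible, not Zariski-closed. The transfer you want in the forward direction still works (a non-empty constructible $\mathbb{F}_p$-scheme has a point with finite residue field, whence a point over some $\mathbb{F}_{p^n}$), but the justification is Chevalley-type, not ``same equations have same solutions.''

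The substantive one is a genuine gap in the converse direction. From $c^\lambda_{\mu\nu}\neq 0$ and the leading-coefficient identity you only conclude $g^\lambda_{\mu\nu}(q)>0$ for $q$ \emph{sufficiently large}, giving a submodule over a large $\mathbb{F}_q$. ``Extension of scalars to any larger $F$'' does not reach an arbitrary field $F$ of characteristic $p$: the prime field $F=\mathbb{F}_p$, which the theorem must cover, contains none of your large $\mathbb{F}_q$, and knowing the leading coefficient of $g^\lambda_{\mu\nu}$ tells you nothing about its value at $t=p$. What closes this gap in the Hall polynomial approach is T.\ Klein's (non-elementary) theorem that all coefficients of $g^\lambda_{\mu\nu}(t)$ are non-negative integers, which forces $g^\lambda_{\mu\nu}(p)>0$ once the polynomial is non-zero, yielding a submodule over $\mathbb{F}_p$ and hence, by extension of scalars, over every $F$ of characteristic $p$. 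You do not invoke Klein's theorem, so as written the converse is incomplete.

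Your alternative sketch --- building the submodule directly from a semistandard lattice skew tableau by peeling off horizontal strips and checking Jordan types at each step --- is the right way to get a field-independent, self-contained proof and would make both issues above moot. As stated, however, it is a plan rather than an argument: the content of the theorem lies precisely in the inductive verification that the lattice property guarantees the prescribed Jordan type of each successive quotient, and that step is not carried out.
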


We will use this theorem in the following sense: When $M$, $U$ and $Q$ are $FC$-modules with associated partitions $\lambda$, $\mu$ and $\nu$ such that $U \leq M$ and $M/U = Q$, then there is a skew tableau of shape $\lambda/\mu$ filled in a way that it is semistandard and satisfies the lattice property and has content $\nu$. Such a skew tableau is what we will mean when speaking of the skew tableau associated to $M/U = Q$.  

The previous theorem in combination with the symmetry of the Littlewood-Richardson coefficients implies that when we are only interested in possible isomorphism types we can ``swap'' around submodules and factor modules of an $FC$-module in the following sense. 

\begin{corollary}
Let $F$ be a field of characteristic $p$, $C$ a cyclic group of order $p$ and $M$, $U$ and $Q$ be $FC$-modules. Then $M$ has a submodule isomorphic to $U$ with quotient isomorphic to $Q$ if and only if it has a submodule isomorphic to $Q$ with quotient isomorphic to $U$.  
\end{corollary}   

Applying this lemma as many times as needed we obtain the following.

\begin{lemma}\label{ModSwap}
Let $F$ be a field of characteristic $p$, $C$ a cyclic group of order $p$ and $M$, $U_1,\ldots ,U_n$, $V$ be $FC$-modules. Let $\sigma$ be a permutation of $\{1,\ldots,n \}$ and set $Q_0 = M$. Then there are $FC$-modules $E_1,\ldots ,E_n$, $Q_1, \ldots ,Q_n$ such that for each $i$ 
\begin{itemize}
\item[(i)] $E_i$ is a submodule of $Q_{i-1}$,
\item[(ii)] $Q_{i} = Q_{i-1}/E_{i}$,
\item[(iii)] $Q_n \cong V$,
\item[(iv)] $E_i \cong U_i$
\end{itemize} 
 if and only if there are modules $E_1, \ldots ,E_n$, $Q_1, \ldots ,Q_n$ satisfying (i), (ii), (iii) and $E_i \cong U_{\sigma(i)}$.
\end{lemma}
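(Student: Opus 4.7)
The plan is to reduce to the case where $\sigma$ is a single adjacent transposition $(i,i+1)$ and then iterate, since adjacent transpositions generate the symmetric group on $\{1,\ldots,n\}$. Given the data $E_1,\ldots,E_n$, $Q_1,\ldots,Q_n$ realising the filtration of $M$ with successive bottom quotients $S_1,\ldots,S_n$ and top $T$, an adjacent swap at position $i$ should leave $Q_0,\ldots,Q_{i-1}$ and $Q_{i+1},\ldots,Q_n$ unchanged, and modify only the two-step segment $Q_{i-1} \supseteq E_i \supseteq 0$ followed by $Q_i = Q_{i-1}/E_i \supseteq E_{i+1} \supseteq 0$ so that the bottom layers are interchanged.

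Concretely, I would let $E'$ be the preimage in $Q_{i-1}$ of $E_{i+1} \subseteq Q_i = Q_{i-1}/E_i$. Then $E'$ is an $FC$-module containing the submodule $E_i \cong S_i$ with quotient $E'/E_i \cong E_{i+1} \cong S_{i+1}$. Applying the lemma stated immediately before (the two-term ``swap'' statement, which is a direct consequence of the symmetry $c^\lambda_{\mu,\nu} = c^\lambda_{\nu,\mu}$ of Littlewood--Richardson coefficients together with the theorem from \cite{BachleMargolis4primaryII}) to $E'$ yields a submodule $\widetilde{E}_i \subseteq E'$ with $\widetilde{E}_i \cong S_{i+1}$ and $E'/\widetilde{E}_i \cong S_i$. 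I would then define the new data by setting $E'_i := \widetilde{E}_i$, $Q'_i := Q_{i-1}/\widetilde{E}_i$, $E'_{i+1} := E'/\widetilde{E}_i$ (which sits inside $Q'_i$), $Q'_{i+1} := Q'_i/E'_{i+1}$, and leaving $E_j$, $Q_j$ unchanged for $j<i$ and $j>i+1$. The third isomorphism theorem gives
\[
Q'_{i+1} \;=\; (Q_{i-1}/\widetilde{E}_i)\big/(E'/\widetilde{E}_i) \;\cong\; Q_{i-1}/E' \;\cong\; (Q_{i-1}/E_i)\big/(E'/E_i) \;=\; Q_i/E_{i+1} \;=\; Q_{i+1},
\]
so the tail of the chain indeed matches up, the condition $Q_n \cong T$ is preserved, and the new successive bottom quotients are exactly $S_1,\ldots,S_{i-1},S_{i+1},S_i,S_{i+2},\ldots,S_n$.

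For a general permutation $\sigma$, write $\sigma$ as a product of adjacent transpositions and apply the construction above once for each transposition, in order. Since the argument is entirely symmetric in ``before'' and ``after'' (the two-term swap lemma is an if-and-only-if), the equivalence stated in the lemma follows: a permissible choice of $E_i \cong S_i$ exists precisely when a permissible choice of $E_i \cong S_{\sigma(i)}$ exists.

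The only real substance is the adjacent-swap step; once it is set up, the general case is a formal consequence of the generation of $S_n$ by adjacent transpositions. The main point to get right is the bookkeeping in the third-isomorphism-theorem computation, ensuring that swapping inside the intermediate module $E'$ does not disturb the quotients $Q_{i+1},\ldots,Q_n$ farther up the chain; I do not foresee a deeper obstacle, since the two-term lemma already packages the nontrivial representation-theoretic input.
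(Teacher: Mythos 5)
Your proof is correct and is exactly the elaboration the paper has in mind: the paper gives no written proof beyond the remark that one obtains the lemma by ``applying this lemma as many times as needed,'' where ``this lemma'' is the two-term swap immediately preceding. Your reduction to adjacent transpositions, the preimage $E'$ of $E_{i+1}$ in $Q_{i-1}$, the application of the two-term swap inside $E'$, and the third-isomorphism-theorem check that $Q'_{i+1}\cong Q_{i+1}$ so the tail of the chain is unaffected (up to isomorphism, which is all that matters here since (iii) and (iv) are stated up to isomorphism) are precisely the intended details.
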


Let $T$ be a Young tableau or skew tableau with content $\nu = (\nu_1,\ldots ,\nu_t)$. For an integer $s$ we write $\gamma_s(T)$ for the number of $\nu_i$ which satisfy $\nu_i \geq s$. If $M$ is an $FC_{p}$-module we also write $\gamma_s(M)$ for the number of direct indecomposable summands of $M$ of dimension at least $s$. So $\gamma_s(M)$ equals $\gamma_s(T)$ if $T$ is the Young tableau of the same shape as the Young diagram associated to $M$ which has been filled to be semistandard and satisfy the lattice property.

We moreover use the following notation to describe the relative position of two boxes $b$ and $b'$ in a diagram or tableau. Let us say that $b'$ is \emph{West} of $b$ if the column of $b'$ is strictly to the left of the column of $b$, and we say that $b'$ is \emph{west} of $b$ if the column of $b'$ is left or equal to the column of $b$. We use the corresponding notations for the compass directions, and we combine them, using capital and small letters to denote strict or weak inequalities. For example, we say $b'$ is \emph{northWest} of $b$ if the row of $b'$ is above or equal to the row of $b$, and the column of $b'$ is strictly left of the column of $b$. We apply this notation also with respect to the positions of rows or columns.

\subsection{Cyclic blocks}\label{CyclicBlocks}
We will recall parts of the representation theory of blocks with cyclic defect, ``one of the deepest results in modular representation theory'' \cite[p.243]{Navarro98}, relevant to us. The details of this theory can be found in \cite[Chapter VII]{Feit82} or \cite[Chapter 11]{Linckelmann19}. Our applications will be in fact to blocks of defect $1$ and these are also nicely described in \cite[Section 4.12]{LuxPahlings}. The special situation of principal blocks of defect $1$ is also described in \cite[Chapter 11]{Navarro98}.

Let $p$ be a prime, $G$ a finite group and $(K,R,F)$ a $p$-modular system big enough for $G$ and all its subgroups, i.e. $R$ is a complete discrete valuation ring, $K$ the quotient field of $R$ and $F$ the residue field of $R$ such that $F$ has characteristic $p$ and all ordinary representations of $G$ and it subgroups can be realized over $K$. Let $B$ be a $p$-block of $G$ with cyclic defect group $P$. Then there is a combinatorial structure associated to $B$ called the \emph{Brauer tree} $Y = (V,E)$. It is a tree in the sense of graph theory with $V$ being a set of vertices and $E$ a set of edges and we will speak of the vertices having only one neighbor as \emph{leafs}. In particular $Y$ contains no loops or double edges. The elements of $V$ correspond to all the ordinary complex irreducible characters $G$ lying in $B$. Every element of $V$ corresponds to exactly one irreducible complex character except, possibly, exactly one vertex which is called the \emph{exceptional vertex} and will be denoted by $v_x$. Then $v_x$ corresponds to complex irreducible characters $\eta_1, \ldots ,\eta_t$ which are called \emph{exceptional characters} while the other complex irreducible characters of $B$ are called \emph{non-exceptional}. In particular every ordinary complex character in $B$ belongs to exactly one element of $V$. Furthermore the elements of $E$ correspond bijectively to the irreducible $p$-Brauer characters of $B$ or equivalently to simple $FG$-modules in $B$. If $\chi$ and $\psi$ are ordinary irreducible characters of $B$ corresponding to different vertices $v_\chi$ and $v_\psi$ in $Y$ and $M_\chi$ and $M_\psi$ are $RG$-modules realizing $\chi$ and $\psi$ respectively then $v_\chi$ and $v_\psi$ are connected by an edge labeled by the simple $FG$-module $S$ if and only if $S$ is a composition factor of $F \otimes_R M_\chi$ and $F \otimes_R M_\psi$. 
Moreover the labels of the edges adjacent to $v_\chi$ are exactly the composition factors of $F \otimes_R M_\chi$ each appearing with multiplicity 1. 

Using Brauer characters the following fact easily follows from the structure of $Y$ and is also recorded in \cite[Chapter VII, Theorem 2.15 (iii)]{Feit82}. Assign to each vertex $v$ in $V$ a sign $\delta_v \in \{\pm 1\}$ such that neighboring vertices are associated to different signs and choose for each vertex $v$ an ordinary irreducible character $\chi_v$ associated to it. Then for each $g \in G$ of order not divisible by $p$ we obtain
$$\sum_{v \in V} \delta_v \chi_v(g) = 0.$$

This leads immediately to the observation which will be a key ingredient in the proof of our main result:

\begin{lemma}\label{CharacterFromulaBrauerTree}
Using the notation for a Brauer tree introduced above set $\chi_{v_x} = \eta_1 + \ldots + \eta_t$ and denote by $\chi_v$ otherwise the irreducible complex character associated to a non-exceptional vertex $v$. Then for any element $g \in G$ of order coprime to $p$ we have
$$ \delta_{v_x}\chi_{v_x}(g) + t \cdot \sum_{v \in V \setminus \{v_x\}} \delta_v \chi_v(g) = 0. $$  
\end{lemma}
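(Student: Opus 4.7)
The plan is to deduce this directly from the character identity stated just before the lemma, namely that for any choice of representative irreducible characters $\chi_v$ at each vertex of the Brauer tree $Y$, one has $\sum_{v \in V}\delta_v \chi_v(g) = 0$ whenever $p \nmid o(g)$. The only subtlety is that at the exceptional vertex $v_x$ there are $t$ characters $\eta_1,\dots,\eta_t$ to choose from, and the statement of the lemma replaces this choice by their sum $\chi_{v_x}=\eta_1+\dots+\eta_t$ at the cost of the factor $t$ in front of the remaining terms.

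First I would fix, once and for all, the representatives $\chi_v$ for every non-exceptional vertex $v \in V \setminus \{v_x\}$. For each $i \in \{1,\dots,t\}$, applying the preceding identity with $\eta_i$ chosen as the representative at $v_x$ gives
\begin{equation*}
\delta_{v_x}\,\eta_i(g) + \sum_{v \in V \setminus \{v_x\}} \delta_v\,\chi_v(g) = 0.
\end{equation*}
Summing these $t$ equalities over $i = 1,\dots,t$ yields
\begin{equation*}
\delta_{v_x}\sum_{i=1}^{t}\eta_i(g) + t \cdot \sum_{v \in V \setminus \{v_x\}} \delta_v\,\chi_v(g) = 0,
\end{equation*}
and since $\chi_{v_x} = \eta_1 + \dots + \eta_t$ this is exactly the claimed identity.

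There is no real obstacle here beyond recognizing that the Brauer-tree character relation is independent of which exceptional character one picks at $v_x$: that is precisely what allows the averaging trick. (An equivalent route would be to quote the well-known fact that the exceptional characters $\eta_1,\dots,\eta_t$ all agree on $p$-regular elements, so that $\eta_i(g) = \tfrac{1}{t}\chi_{v_x}(g)$ for each $i$, and then multiply the original identity by $t$.) I prefer the summation version above because it does not require invoking any additional property of exceptional characters beyond what is already built into the preceding display.
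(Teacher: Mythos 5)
Your proof is correct, and it matches the paper's (unstated) intent: the paper simply asserts the lemma ``leads immediately'' from the display $\sum_{v\in V}\delta_v\chi_v(g)=0$ without spelling out the averaging step, and your fixing of representatives at non-exceptional vertices followed by summing over the $t$ choices $\eta_1,\dots,\eta_t$ at $v_x$ is the natural way to make that implication explicit. The alternative you mention (all $\eta_i$ agree on $p$-regular classes, so multiply the single identity by $t$) is equally valid and is really the same observation in disguise; either way the argument is sound.
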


Note that the signs in a Brauer tree can be distributed in two different ways. Hence for a fixed vertex we can choose a sign and then label the other vertices by signs determined from this starting point.

Other facts about Brauer trees which will be important to us are: The number of edges of the tree equals $|N_G(P)/C_G(P)P|$. So the number of edges is at most $p-1$ and the number of vertices at most $p$. The non-exceptional ordinary characters of $B$ have $p$-rational values. Moreover when $p$ is odd and $P$ is of order $p$, then the exceptional characters $\theta_1, \ldots ,\theta_t$ are Galois-conjugate. Even more, if $r$ is the biggest divisor of $|G|$ not divisible by $p$, then 
$$\theta_1 + \ldots + \theta_t = \sum_{\sigma \in \text{Gal}(\mathbb{Q}(\zeta_{rp})/\mathbb{Q}(\zeta_r))} \theta_1^\sigma.$$ 
This last fact follows from \cite[Theorem 4.12.1]{LuxPahlings}

\section{Preparatory results}\label{Section_Prepar}
The following lemma will allow us to handle the exceptional vertex in the Brauer tree using our methods.

\begin{lemma}\label{lemma_GaloisRep}
Let $p$ be an odd prime and $G$ a finite group of order $p^a m$ where $p$ does not divide $m$. Let $K = \mathbb{Q}(\zeta_m)$ and let $\chi$ be an irreducible complex character of $G$. Denote by $K(\chi)$ the smallest field extension of $K$ which contains all the character values of $\chi$. Then 
$$\psi = \sum_{\sigma \in \Gal(K(\chi)/K)} \chi^\sigma $$
is the character of an irreducible $KG$-representation. 
\end{lemma}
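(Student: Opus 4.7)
The plan is to reformulate the statement in terms of the Schur index of $\chi$ over $K$, and then to invoke the classical theory of Schur indices to show that this index is $1$ under the odd-prime hypothesis.

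First I would observe that if $M$ denotes the simple $KG$-module whose $\mathbb{C}G$-extension contains the irreducible module $V_\chi$ affording $\chi$, then by the Wedderburn decomposition of $KG$, combined with the standard description of the simple components over $K$ via Galois orbits of absolutely irreducible characters, the $\mathbb{C}G$-character of $M$ is
\[
m_K(\chi)\cdot\psi,
\]
where $m_K(\chi)$ is the Schur index of $\chi$ over $K$ (see e.g.\ Curtis--Reiner). Thus the lemma is equivalent to the assertion $m_K(\chi)=1$, and I would spend the remainder of the proof establishing this.

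To prove $m_K(\chi)=1$, I would appeal to the local--global formula
\[
m_K(\chi)=\operatorname{lcm}_v m_{K_v}(\chi),
\]
where $v$ ranges over the places of $K$, together with the Brauer--Witt induction theorem, which expresses each local Schur index in terms of Schur indices of characters of $\ell$-elementary subgroups of $G$, for $\ell$ the residue characteristic of $v$. The archimedean places cause no difficulty since $K=\mathbb{Q}(\zeta_m)$ is totally imaginary whenever $m\ge 3$, and the residual cases $m\in\{1,2\}$ can be treated by direct inspection. For a non-archimedean place $v$ of residue characteristic $\ell\ne p$, one notes that $|G|_\ell=m_\ell$ divides $m$, so $K_v$ already contains $\zeta_{|G|_\ell}$; combined with the fact that $K_v/\mathbb{Q}_\ell$ is a local field extension, this suffices for $m_{K_v}(\chi)=1$ by standard local class field theoretic arguments on the Brauer group.

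The essential step, and the one that genuinely uses $p$ odd, is the analysis at places $v$ above $p$. Since $p\nmid m$, the completion $K_v$ is an \emph{unramified} extension of $\mathbb{Q}_p$. Via Brauer--Witt I would reduce $m_{K_v}(\chi)$ to the Schur index over $K_v$ of a character of a $p$-elementary subgroup of the form $P\times C$, with $P$ a $p$-group and $C$ a cyclic group of $p'$-order. The character of $C$ is linear with values in $\mathbb{Q}(\zeta_{|C|})\subseteq K$, so has Schur index $1$ over $K_v$; by Roquette's theorem, for the \emph{odd} prime $p$ every irreducible character of the $p$-group $P$ has Schur index $1$ over $\mathbb{Q}$, hence over $K_v$. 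Combining the two factors in the tensor decomposition of the character of $P\times C$ gives $m_{K_v}(\chi)=1$, completing the argument. The main obstacle is making the $p$-local reduction precise, including the verification that the odd-prime hypothesis survives the Brauer--Witt step; the failure of the statement for $p=2$ (the $2$-dimensional character of $Q_8$ has Schur index $2$ over $\mathbb{Q}$) shows that this hypothesis is genuinely needed.
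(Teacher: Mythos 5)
Your proposal is mathematically sound but takes a genuinely different, much more low-level route than the paper. The paper's proof is very short: it simply cites Fong's theorem (Isaacs, \emph{Character Theory of Finite Groups}, Corollary 10.13), which asserts exactly that $m_K(\chi)=1$ whenever $p$ is odd, $|G|=p^a m$ with $p\nmid m$, and $\mathbb{Q}(\zeta_m)\subseteq K$; it then passes from a $K(\chi)$-module affording $\chi$ to an irreducible $KG$-module affording $\psi$ by invoking Isaacs Exercise 9.6 and Theorem 9.21. You instead reconstruct Fong's theorem from first principles: the local--global formula $m_K(\chi)=\operatorname{lcm}_v m_{K_v}(\chi)$, the Brauer--Witt reduction to $\ell$-elementary subgroups at each place, and, at the decisive places above $p$, Roquette's theorem that irreducible characters of $p$-groups for odd $p$ have trivial Schur index over $\mathbb{Q}$. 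This is essentially the standard proof \emph{of} Fong's theorem, so what you gain is a self-contained argument exposing the role of the odd-prime hypothesis (via Roquette and the $Q_8$ counterexample), at the cost of a substantial detour through local class field theory that the paper avoids by citation. A few steps in your sketch would need firming up to be fully rigorous --- the treatment of the cases $m\in\{1,2\}$ at the archimedean places, the precise Brauer--Witt reduction at residue characteristics $\ell\neq p$, and the assembly of the two tensor factors of the $p$-elementary subgroup $P\times C$ into a single Schur index computation --- but these are all standard and your overall strategy is correct.
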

\begin{proof}
By a result of Fong the Schur index of $\chi$ over $F$ equals $1$ \cite[Corollary 10.13]{Isaacs1976} and so there is a simple $K(\chi)$-module $M$ affording $\chi$. By \cite[Exercise 9.6]{Isaacs1976} the module $M$ remains irreducible when viewed as $KG$-module and by \cite[Theorem 9.21]{Isaacs1976} it has character $\psi$. 
\end{proof}

We will use this lemma in the following way.
\begin{corollary}\label{cor_GaloisRep}
Let $p$ be an odd prime, $r$ the maximal divisor of $|G|$ coprime with $p$ and $R$ a complete discrete valuation ring  of characteristic $0$ with maximal ideal containing $p$ such that $R$ contains a primitive $r$-th root of unity and $p$ is unramified in $R$. Assume a Sylow $p$-subgroup of $G$ is of order $p$ and the exceptional characters in the principal $p$-block of $G$ are $\theta_1, \ldots ,\theta_t$. Then $\theta_1+\ldots +\theta_t$ is the character of a simple $RG$-module.
\end{corollary}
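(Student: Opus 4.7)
The plan is to first produce a simple $KG$-module with character $\psi := \theta_1+\ldots+\theta_t$ over $K := \mathbb{Q}(\xi)$ by a direct application of Lemma~\ref{lemma_GaloisRep}, and then to lift this to the $RG$-setting by base-changing to $F := \operatorname{Frac}(R)$ and taking an $RG$-stable full $R$-lattice inside.

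The first step is essentially immediate. The last displayed identity of Section~\ref{CyclicBlocks} says that $\psi$ is the sum over the orbit of $\theta_1$ under $\operatorname{Gal}(K(\theta_1)/K)$, since $K(\theta_1) = \mathbb{Q}(\xi\zeta)$ and $\operatorname{Gal}(\mathbb{Q}(\xi\zeta)/\mathbb{Q}(\xi))$ acts transitively on $\{\theta_1,\ldots,\theta_t\}$. Hence Lemma~\ref{lemma_GaloisRep} applied with $\chi = \theta_1$ yields a simple $KG$-module $M$ affording $\psi$. Because $R \supseteq \mathbb{Z}[\xi]$, the field $K$ embeds into $F$, so we may form $M_F := M \otimes_K F$ and pick any $RG$-stable full $R$-lattice $L \subseteq M_F$. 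Such an $L$ automatically affords $\psi$, and it will be the desired simple $RG$-module provided $M_F$ is itself simple as an $FG$-module.

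Establishing this last simplicity is the main obstacle. The proof of Lemma~\ref{lemma_GaloisRep} realises $M$ by restriction of scalars from a simple $K(\theta_1)G$-module whose endomorphism ring is $K(\theta_1) = K(\zeta)$; a dimension count against $\operatorname{End}_{KG}(M) \otimes_K \mathbb{C}$ then forces $\operatorname{End}_{KG}(M) = K(\zeta)$, and so
$$\operatorname{End}_{FG}(M_F) \;\cong\; K(\zeta) \otimes_K F.$$
By Wedderburn, $M_F$ is simple iff this tensor product is a field. Here the hypothesis that $p$ is unramified in $R$ becomes decisive: $F$ is then an unramified extension of $\mathbb{Q}_p$, whereas $\mathbb{Q}_p(\zeta)/\mathbb{Q}_p$ is totally ramified of degree $p-1$. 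These two extensions of $\mathbb{Q}_p$ are therefore linearly disjoint, so $[F(\zeta):F] = p-1 = [K(\zeta):K]$, forcing $F$ and $K(\zeta)$ to be linearly disjoint over $K$ as well. Consequently $K(\zeta) \otimes_K F \cong F(\zeta)$ is a field, $M_F$ is simple, and $L$ is the required simple $RG$-module with character $\psi$.
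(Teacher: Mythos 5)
Your proof is correct but takes a genuinely different route from the paper's. The paper's proof is two sentences: it applies Lemma~\ref{lemma_GaloisRep} with $K$ taken to be the quotient field of $R$ itself, thereby silently extending that lemma (which is stated only for the global cyclotomic field $\mathbb{Q}(\zeta_m)$) to the local setting, and then invokes \cite[Remark 6c)]{BachleMargolis4primaryII} for the passage from a simple module over the fraction field of $R$ to a simple $RG$-lattice. You instead apply Lemma~\ref{lemma_GaloisRep} over $K=\mathbb{Q}(\xi)$ exactly as stated, identify $\operatorname{End}_{KG}(M)$, and then verify that simplicity survives base change to $F=\operatorname{Frac}(R)$ by a linear-disjointness argument. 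This buys you something the paper's version keeps hidden: it pinpoints exactly where the hypothesis ``$p$ is unramified in $R$'' is indispensable (it is what makes $\operatorname{End}_{FG}(M_F)$ a field rather than a product of fields), whereas the paper absorbs this into an unexplained local version of the Schur-index step.

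One imprecision, inherited from the paper's final displayed identity of Section~\ref{CyclicBlocks}, is the claim that $K(\theta_1)=\mathbb{Q}(\xi\zeta)=K(\zeta)$. In general one only has $K\subseteq K(\theta_1)\subseteq K(\zeta)$ with $[K(\theta_1):K]=t$, and $t$ may be strictly less than $p-1$ (for the principal $5$-block of $A_5$, for instance, $t=2$ and $K(\theta_1)=K(\sqrt 5)\subsetneq K(\zeta_5)$); correspondingly the displayed identity in Section~\ref{CyclicBlocks} overcounts each $\theta_i$ by the factor $(p-1)/t$ if read literally. This does not affect your argument: what Lemma~\ref{lemma_GaloisRep} actually produces is a simple module with character $\sum_{\sigma\in\operatorname{Gal}(K(\theta_1)/K)}\theta_1^\sigma=\psi$ regardless of what $K(\theta_1)$ is, and $\operatorname{End}_{KG}(M)=K(\theta_1)$. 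Since $K(\theta_1)$ is an intermediate field of the extension $K(\zeta)/K$, which becomes totally ramified above $p$, the extension $K(\theta_1)/K$ is likewise totally ramified above $p$ and therefore still linearly disjoint from the unramified $F$; hence $K(\theta_1)\otimes_K F$ remains a field and the conclusion stands.
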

\begin{proof}
Let $K$ be the quotient field of $R$. By Lemma~\ref{lemma_GaloisRep} the character $\theta_1+\ldots +\theta_t$ is afforded by a simple $KG$-module. By \cite[Remark 6c)]{BachleMargolis4primaryII} any character of a simple $KG$-module is also the character of a simple $RG$-module.
\end{proof}

We next obtain combinatorial results about skew tableaux which will be relevant for us.
First we provide a generalization of \cite[Lemma 3.3]{MargolisConway}.

\begin{lemma}\label{lemma:SmallBranch1}
Let $T$ be a semistandard skew tableau satisfying the lattice property. Let $b$ be a box in $T$ with entry $e$ such that in the same row as $b$ there are $\ell$ boxes to the right of $b$, where $\ell$ might be $0$. Then $w(b)$ contains $e$ at least $\ell +1$ times.
\end{lemma}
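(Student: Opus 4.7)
The plan is to induct on the number $\ell$ of boxes lying strictly to the right of $b$ in its row.

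The base case $\ell = 0$ is immediate: the box $b$ itself is the last letter appearing in $w(b)$, and by hypothesis it carries the entry $e$, so $w(b)$ already contains $e$ at least once, i.e.\ $\ell+1$ times.

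For the inductive step, let $b'$ be the box immediately to the east of $b$ in its row and write $e'$ for its entry. Since $T$ is semistandard, moving one step east along a row cannot decrease the entry, so $e' \ge e$. The box $b'$ has $\ell - 1$ boxes to its right, and therefore the inductive hypothesis applies to $b'$: the word $w(b')$ contains $e'$ at least $\ell$ times. Now I invoke the lattice property on the prefix $w(b')$ of $w(T)$: for every letter $i$, the number of $i$'s in $w(b')$ is at least the number of $(i+1)$'s. Iterating this from $i = e$ up to $i = e' - 1$ yields that $e$ appears in $w(b')$ at least as often as $e'$, hence at least $\ell$ times. Finally, because the reading convention is right to left and top to bottom, and because in a (skew) diagram the row containing $b$ is a contiguous strip, the box read immediately after $b'$ is $b$ itself. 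Thus $w(b)$ is obtained from $w(b')$ by appending the letter $e$, which gives at least $\ell+1$ occurrences of $e$ in $w(b)$, as desired.

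I do not anticipate a serious obstacle: the only substantive combinatorial content is the standard ``ballot'' consequence of the lattice property, namely that $n_e \ge n_{e'}$ whenever $e \le e'$, and this is used in its most direct form. The one point that warrants a careful reading of the definitions is the bookkeeping of the reading word, specifically verifying that $w(b)$ is precisely $w(b')$ with the single letter $e$ concatenated on the right; once this is pinned down, the induction carries through without further effort.
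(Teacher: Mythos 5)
Your proof is correct and follows essentially the same inductive argument as the paper: both use induction on $\ell$, apply the inductive hypothesis to the box immediately to the right, invoke the lattice property to transfer the multiplicity count from $e'$ down to $e$, and then note that reading $b$ itself adds one more occurrence of $e$.
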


\begin{proof}
For $\ell = 0$ this is clear as $w(b)$ contains $e$ at least once, the $e$ coming from $b$ itself.

So assume $\ell > 0$. Let $e_r$ be the entry in the box $b_r$ which lies right from $b$. Then by induction $e_r$ is contained in $w(b_r)$ at least $\ell$ times. Since $e \leq e_r$, as $T$ is semistandard, and $w(b_r)$ satisfies the lattice property, also $e$ is contained in $w(b_r)$ at least $\ell$ times. Hence $e$ is contained in $w(b)$ at least $\ell + 1$ times.
\end{proof}

\begin{lemma}\label{full_rectangle}
 Let $T$ be a semistandard skew tableau satisfying the lattice property. If $T$ contains a full rectangle of boxes of height $h$ and width $k$, then $\gamma_k(T)\ge h$.
\end{lemma}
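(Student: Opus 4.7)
The plan is to apply Lemma~\ref{lemma:SmallBranch1} to the boxes in the leftmost column of the guaranteed rectangle, thereby exhibiting $h$ distinct letters each occurring at least $k$ times in $T$.

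More precisely, suppose the rectangle spans rows $r, r+1, \ldots, r+h-1$ and columns $c, c+1, \ldots, c+k-1$. Let $b_i$ denote the box at position $(r+i-1,\, c)$, i.e.\ the box in row $r+i-1$ of the leftmost column of the rectangle, and let $e_i$ be the entry of $b_i$. Since $T$ is semistandard, the entries down a column strictly increase, so $e_1 < e_2 < \cdots < e_h$; in particular these $h$ integers are pairwise distinct.

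Next, each $b_i$ has at least $k-1$ boxes to its right in the same row, namely the remaining boxes of the $i$-th row of the rectangle (the row may in fact extend further, which is fine). Applying Lemma~\ref{lemma:SmallBranch1} with $\ell \geq k-1$, the letter $e_i$ appears at least $\ell + 1 \geq k$ times in $w(b_i)$, and hence at least $k$ times in $w(T)$, since $w(b_i)$ is an initial segment of $w(T)$. Writing the content as $\nu = (\nu_1, \ldots, \nu_t)$, this means $\nu_{e_i} \geq k$ for each $i = 1, \ldots, h$, and these $h$ indices are distinct. Therefore $\gamma_k(T) \geq h$.

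I do not expect any real obstacle: the argument is essentially a one-step reduction to Lemma~\ref{lemma:SmallBranch1}. The only subtlety is making sure that the $h$ contributions come from $h$ \emph{distinct} letters, which is ensured by the strict increase along the leftmost column of the rectangle, and that the hypothesis of Lemma~\ref{lemma:SmallBranch1} is met with $\ell \geq k-1$, which is immediate from the rectangular shape.
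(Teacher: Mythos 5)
Your proof is correct and takes essentially the same approach as the paper: both rest on applying Lemma~\ref{lemma:SmallBranch1} to the leftmost column of the rectangle. The paper is slightly more economical — it applies the lemma only to the bottom box $b$ of that column with entry $e$, notes $e\ge h$ by the strict column increase, and then uses the lattice property once more to conclude that every letter $\le e$ also occurs at least $k$ times in $w(b)$, giving $\gamma_k(T)\ge e\ge h$. You instead apply the lemma to each of the $h$ boxes in the column and collect $h$ distinct letters; this avoids the extra explicit appeal to the lattice property at the cost of $h$ invocations of the lemma. Both are sound and of comparable length, so the distinction is cosmetic.
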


\begin{proof}
 Let $b$ be the lowest box in the leftmost  column of the rectangle and let $e$ be its entry. Hence $e\ge h$, since $T$ is semistandard. By Lemma~\ref{lemma:SmallBranch1} and the fact that $w(b)$ satisfies the lattice property,  $w(b)$ contains $e$ and every entry smaller than $e$ at least $k$ times. As $e\ge h$ we get $\gamma_k(T)\ge h$.
\end{proof}
%\vspace{-.3cm}

\[
\begin{tikzpicture}
\draw  (0,2) -- (0,0) -- (3,0) -- (3,1) -- (4,1) -- (4,2) -- (5,2) -- (5,4) -- (4,4) -- (4,3) -- (1,3) -- (1,2) -- (0,2
);

%horizontal lines
\draw[red, dashed] (-0.5,3.5) -- (5.5, 3.5);
\draw[red, dashed] (-0.5,0) -- (5.5, 0);
%vertical lines 
\draw[red, dashed] (0,-0.5) -- (0, 4.5);
\draw[red, dashed] (3.5,-0.5) -- (3.5, 4.5);

%labels 
\node[label=south:{\Small{$\ell - k$} }] at (3.5, 5) {};
\node[label=west:{\Small{$c+1$} }] at (-0.5,3.5) {};
\node[label=west:{\Small{$c+h$} }] at (-0.5,0) {};
\node[label=north:{$T$ }] at (2,1.5) {};
 
\end{tikzpicture}
\]
\vspace{-.8cm}

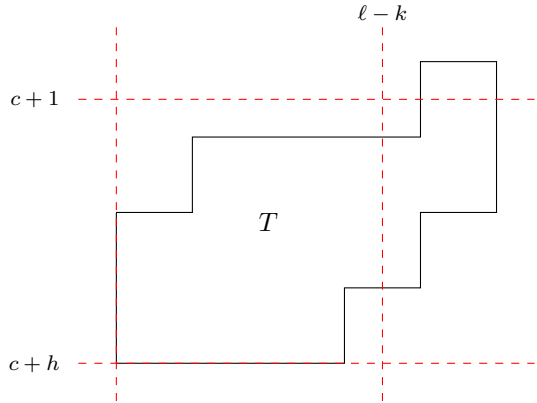
\captionof{figure}{Illustrating Lemma~\ref{colums_between_lines}}
\vspace{.5cm}

\begin{lemma}\label{colums_between_lines}
 Let $T$ be a semistandard skew tableau with $\ell$ columns satisfying the lattice property. Fix some positive integers $c$, $h$ and $k$. Assume that the first $\ell-k$ columns of $T$ lie between the $(c+1)$-th and $(c+h)$-th rows. Then $\gamma_{k+1}(T)\le h$.
\end{lemma}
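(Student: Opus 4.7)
The plan is to argue by contradiction. Suppose $\gamma_{k+1}(T)\ge h+1$, i.e.\ the content $\nu$ satisfies $\nu_{h+1}\ge k+1$. Since $\nu$ is a partition (by the lattice property), each of the letters $1,2,\ldots,h+1$ occurs in $T$ at least $k+1$ times. Because entries in each column of $T$ are strictly increasing, each such letter occupies at least $k+1$ distinct columns. As only $k$ columns have index $>\ell-k$, the letter $h+1$ must appear in at least one column with index $\le\ell-k$; by the shape hypothesis such an occurrence sits at a position $b=(r^{*},j^{*})$ with $j^{*}\le\ell-k$ and $c+1\le r^{*}\le c+h$. I would pick $b$ with $r^{*}$ maximal and, among these, $j^{*}$ minimal, so that $b$ is the last box with entry $h+1$ encountered in the reading order among all $(h+1)$-boxes of the first $\ell-k$ columns.

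Next I would apply Lemma~\ref{lemma:SmallBranch1} to $b$: there are $\lambda_{r^{*}}-j^{*}$ boxes to the right of $b$ in row $r^{*}$, so the entry $h+1$ appears in $w(b)$ at least $\lambda_{r^{*}}-j^{*}+1$ times. Combining with the lattice property, each of the letters $1,2,\ldots,h$ also appears at least $\lambda_{r^{*}}-j^{*}+1$ times in $w(b)$. By row-monotonicity, every entry in row $r^{*}$ at columns $\ge j^{*}$ is $\ge h+1$, so each of these copies of the letters $1,\ldots,h$ in $w(b)$ must lie in one of the rows $1,\ldots,r^{*}-1$.

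The hypothesis that the first $\ell-k$ columns lie in rows $c+1,\ldots,c+h$ forces every box in rows $1,\ldots,c$ to lie in one of the last $k$ columns; moreover the column $j^{*}$ has at most $h-1$ boxes above $b$, accommodating distinct entries from $\{1,\ldots,h\}$. I would then perform a double count: for each letter $i\le h$, its required occurrences in the region $w(b)$ must fit either into the top $c$ rows (which are restricted to the $k$ last columns, hence at most $k$ copies of $i$ per row-block) or into the rows $c+1,\ldots,r^{*}-1$ (at most $\ell$ columns per row, but at most one copy of $i$ per column). The chain argument obtained by iterating the lattice property---first $e$ read lies in a row $\ge r_{\min}+e-1$---lets me further restrict in which rows the letter $1$, the letter $2$, etc.\ may appear. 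Combining these bounds with the inequality $\lambda_{r^{*}}-j^{*}+1\ge k+1$ (which holds whenever $\lambda_{r^{*}}=\ell$) produces the required numerical contradiction, because the number of distinct columns available in the first $\ell-k$ columns for small letters in rows $c+1,\ldots,r^{*}-1$ is bounded above by $h-1$.

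The main obstacle is the case in which the last $k$ columns extend up into the top $c$ rows (i.e.\ $r_{\min}\le c$): then the simple column-count $(h+1)\nu_{h+1}\le h(\ell-k)+(h+1)k$ is too weak, and one must exploit the chain structure of the lattice property together with the fact that each letter occupies at most one box per column to obtain a tight bound. A clean execution of this counting, likely proceeding by choosing the extremal occurrence $b$ as above and then partitioning the region $w(b)$ into a top block (rows $1,\ldots,c$, confined to the last $k$ columns) and a middle block (rows $c+1,\ldots,r^{*}-1$), should yield the contradiction and complete the proof.
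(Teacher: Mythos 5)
Your opening moves are correct: assuming $\gamma_{k+1}(T)\ge h+1$, the lattice property forces each of $1,\dots,h+1$ to occur at least $k+1$ times, column-strictness spreads these over at least $k+1$ distinct columns, and a pigeonhole then puts some $(h+1)$-box $b$ in the first $\ell-k$ columns and hence in rows $c+1,\dots,c+h$. But the argument then has a genuine gap which you yourself flag. The application of Lemma~\ref{lemma:SmallBranch1} only gives that $h+1$ occurs at least $\lambda_{r^*}-j^*+1$ times in $w(b)$, and you need this to be at least $k+1$; as you note, this holds only when $\lambda_{r^*}=\ell$, i.e.\ when the row of $b$ extends to the rightmost column, which need not be the case for a skew shape. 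The ``double count'' you propose is never carried out, and the case you identify as the obstacle (last $k$ columns reaching into rows $1,\dots,c$, so that $r_{\min}\le c$) is exactly where the plan is left unfinished: the elementary column bound $(h+1)\nu_{h+1}\le h(\ell-k)+(h+1)k$ gives no contradiction, and the proposed ``chain'' bound (first occurrence of $e$ lies in row $\ge r_{\min}+e-1$) tracks first occurrences, not $(k+1)$-th occurrences, so it does not obviously combine with the row constraint on the first $\ell-k$ columns. Phrases like ``A clean execution of this counting \dots should yield the contradiction'' are not a proof.

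The paper's proof sidesteps the counting entirely with a cleaner invariant: it shows, by induction on the letter $e$, that the box where $e$ appears for the $(k+1)$-th time in $w(T)$ lies west of the $(\ell-k)$-th column (a short argument since each column carries $e$ at most once, and boxes Northwest of that occurrence have entries $<e$) and south of the $(c+e)$-th row (the induction step uses that $w(b_{e-1})$ contains $e$ at most $k$ times, and that boxes northwest of $b_{e-1}$ carry entries $\le e-1$, forcing the $(k+1)$-th $e$ strictly South of $b_{e-1}$). Then any $e$ occurring $k+1$ times forces a box in columns $\le\ell-k$ and rows $\ge c+e$, which the shape hypothesis rules out for $e>h$. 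If you want to salvage your approach, the missing ingredient is precisely this localization of the $(k+1)$-th occurrence rather than the first occurrence, together with the column constraint; with that in hand the counting becomes unnecessary.
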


\begin{proof}
We start by showing the following: Let $b$ be the box in $T$ in which a letter $e$ appears for the $(k+1)$-th time in $w(T)$. Then $b$ is west of the $(\ell-k)$-th column and south of the  $(c+e)$-th row.

\setlength{\parindent}{12pt}

Let $\alpha \leq \ell$ be an integer. Since we are reading from right to left and from top to bottom  and $w(b)$ satisfies the lattice property, if $b$ lies in the $(\ell-\alpha)$-th column, then $w(b)$ contains $e$ at most $\alpha + 1$ times. This follows from the fact that the entries in a box Northwest of $b$ are strictly smaller than $e$, because $T$ is semistandard. Thus, when $e$ appears for the $(k+1)$-th time in $w(T)$, it lies in a box west of the $(\ell-k)$-th column. This proves the first assertion.

To prove the second assertion we argue by induction on $e$. First assume that $e=1$. As west of the $(\ell-k)$-th column there is no box North of the $(c+1)$-th row, the box containing $e$ for the $(k+1)$-th time must be south of the $(c+1)$-th row. Now assume that $e> 1$. By induction hypothesis  $e-1$ appears for the $(k+1)$-th time south of the $(c+e-1)$-th row, say in the box $b_{e-1}$. Then $e$ is contained in $w(b_{e-1})$ at most $k$ times. In addition, every box which is northwest of $b_{e-1}$ contains an entry smaller or equal to $e-1$. Therefore, the box containing $e$ for the $(k+1)$-th time is South of $b_{e-1}$.
 
From the claim stated above it follows that no entry greater than $h$ can appear at least $(k+1)$ times in $w(T)$ is $h$, i.e. $\gamma_{k+1}(T)\le h$.
\end{proof}

\begin{lemma}\label{divided_tableau}
 Let $T$ be a semistandard skew tableau satisfying the lattice property. Divide $T$ into two skew tableaux $T'$ and $T''$ by a vertical line such that $T''$ consists of $k$ columns and $T'$ is the tableau on the right. Then $T'$ is again a semistandard skew tableau with the lattice property and $\gamma_{n+k} (T) \leq \gamma_n(T')$.
\end{lemma}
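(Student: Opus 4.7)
The plan is to verify the two assertions of the lemma separately.

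For the claim that $T'$ is a semistandard skew tableau, the shape of $T'$ is obtained from $\lambda/\mu$ by shifting columns left by $k$; explicitly, the new shape is $\lambda'/\mu'$ with $\lambda'_i = \max(\lambda_i - k, 0)$ and $\mu'_i = \max(\mu_i - k, 0)$, and one checks that both are partitions with $\mu' \subseteq \lambda'$, so this is a valid skew shape. The semistandardness (weakly increasing rows and strictly increasing columns) is inherited directly from $T$.

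For the lattice property of $T'$, the plan is to use induction on the reading order of boxes in $T'$. The key observation is that for any box $b \in T'$ in row $r_b$, one has $w_T(b) = w_{T'}(b) \cup B_b$, where $B_b$ is the set of boxes of $T''$ lying in rows $1, \ldots, r_b - 1$. Applying the lattice property of $T$ at $b$ to a letter pair $(e, e+1)$ gives
\[
\#e \text{ in } w_{T'}(b) + \#e \text{ in } B_b \;\geq\; \#(e+1) \text{ in } w_{T'}(b) + \#(e+1) \text{ in } B_b,
\]
so if $\#e \text{ in } B_b \leq \#(e+1) \text{ in } B_b$, the lattice inequality for $T'$ at $b$ follows immediately. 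The hard part will be the remaining case $\#e \text{ in } B_b > \#(e+1) \text{ in } B_b$: there one must invoke the strict column property of $T''$ (each column contains $e$ at most once and $e+1$ at most once, with $e$ directly above $e+1$ when both occur) together with the accumulated slack of $T$'s lattice inequality at earlier positions in the reading to compensate for the excess. I expect this compensation argument, coupling the structure of $T''$ to the running counts in $w_T$, to be the most delicate step.

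Finally, for the inequality $\gamma_{n+k}(T) \leq \gamma_n(T')$: since $T''$ consists of $k$ columns and each letter appears at most once per column, any given letter appears at most $k$ times in $T''$. Hence if letter $i$ has $\nu_i \geq n+k$ occurrences in $T$, it has at least $\nu_i - k \geq n$ occurrences in $T'$. Thus $\{i : \nu_i \geq n+k\} \subseteq \{i : \nu'_i \geq n\}$, and the desired inequality follows by taking cardinalities.
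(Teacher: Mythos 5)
Your treatment of the shape, semistandardness, and the inequality $\gamma_{n+k}(T)\le\gamma_n(T')$ is correct and matches the paper's argument exactly: since $T''$ has $k$ strictly increasing columns, any letter occurs at most $k$ times in $T''$, so a letter with $\ge n+k$ occurrences in $T$ must have $\ge n$ in $T'$.

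The genuine gap is the lattice property of $T'$. You correctly reduce the question, via $w_T(b)=w_{T'}(b)\cup B_b$, to showing
\[
\#e\bigl(w_T(b)\bigr)-\#(e+1)\bigl(w_T(b)\bigr)\ \ge\ \#e(B_b)-\#(e+1)(B_b),
\]
and you correctly observe that the case $\#e(B_b)>\#(e+1)(B_b)$ does occur and is the crux. At that point you write that one ``must invoke the strict column property of $T''$ together with the accumulated slack of $T$'s lattice inequality at earlier positions in the reading to compensate for the excess,'' but you do not actually carry this out, and it is not a routine step. The lattice inequality of $T$ at the single box $b$ is in general not strong enough (you would need slack at $b$ at least $\#e(B_b)-\#(e+1)(B_b)$, and nothing you have stated guarantees this); nor does the column-strictness of $T''$ alone bound $\#e(B_b)-\#(e+1)(B_b)$ (a column of $T''$ can contain an $e$ in the relevant rows with the matching $e+1$ falling in row $r_b$ or lower, or with the entry below the $e$ jumping past $e+1$). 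To close the argument one genuinely needs a refined version of the lattice condition (e.g.\ that in a semistandard lattice skew tableau the number of $e$'s in rows $\le\rho$ dominates the number of $(e+1)$'s in rows $\le\rho+1$) or an appeal to plactic/Knuth-equivalence machinery; the paper simply cites \cite[Section 2.3 Lemma 1, Section 5.2 Lemma 2]{Fulton1997} for precisely this point. As written, your proposal is a plan for that step rather than a proof of it, and the missing argument is the heart of the lemma.
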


\begin{proof}
 Clearly $T'$ is semistandard. Moreover, the lattice property is guaranteed by \cite[Section 2.3 Lemma 1, Section 5.2 Lemma 2]{Fulton1997}. Moreover any entry appearing in $T''$ can appear at most $k$ times, as $T''$ is also semistandard. So an entry appearing $n+k$ times in $T$ has to appear at least $n$ times in $T'$ implying $\gamma_{n+k} (T) \leq \gamma_n(T')$. 
\end{proof}

\section{Proofs of main results}\label{Section_Proofs}
In this section we will prove the main results. We first study parts of the Brauer tree and then we show how this leads to the proof of Theorem~\ref{main_inequality} when we consider the whole tree. 

\setlength{\parindent}{12pt}

Throughout the section we will assume that $u \in \mathrm{V}(\mathbb{Z}G)$ is a unit of order $pm$ where $p$ is an odd prime and $m$ an integer not divisible by $p$. We denote by $R$ a complete discrete valuation ring of characteristic $0$ whose maximal ideal contains $p$. Moreover we assume that $R$ contains a primitive $r$-th root of unity where $r$ is the maximal divisor of the order of $G$ not divisible by $p$ and that $p$ is unramified in $R$. We denote by $K$ the quotient field of $R$ and by $F$ the residue field. We use the bar-notation to denote reduction modulo the maximal ideal of $R$, also with respect to modules. Note that if $\chi$ is the character of a simple $KG$-module it is also the character of a simple $RG$-module by \cite[Remark 6c)]{BachleMargolis4primaryII}.  
When we will speak of a Brauer tree, we assume that each vertex $v$ is labeled by a sign $\delta_v$ as described in Section~\ref{CyclicBlocks}.
%When not explicitly stated otherwise we regard a module always as an $F \langle u \rangle$-module.

We will denote by $\xi$ any $m$-th root of unity in $R$, fixed throughout this section. By Proposition~\ref{lattice_method} an $R\langle u \rangle$-module $M$ decomposes as $M \cong M_1 \oplus M_2 \oplus \ldots \oplus M_m$ where $u^p$ is acting on $M_i$ as $\zeta_m^i$. Assume $M_\ell$ is the direct summand of $M$ on which the $p'$-part of $u$ acts as $\xi$. As the action of the $p'$-part of $\langle u \rangle$ on $M_\ell$ is hence fixed we can view $M_\ell$ as an $RC_p$-module via the action of $\langle u^m \rangle$. Thus we can apply also the $\gamma_k$-notation introduced in Section 2 to $M_\ell$ and we will do this using $\gamma_{k,\xi}(M)$, i.e. $\gamma_{k,\xi}(M) = \gamma_k(M_\ell)$.

We start the study of the behavior of multiplicities of eigenvalues of units in the local situation around a fixed vertex of a Brauer tree.

\begin{proposition}\label{negative_node} Let $M$ be an $RG$-module with character $\chi$ such that $\bar{M}$ as $FG$-module has composition factors $E_1$, $E_2, \ldots ,E_n$ and $D$. 
Assume that when viewed as $F \langle u \rangle$-modules $\gamma_{k_i, \xi}(E_i)\le m_i$ for certain $k_i$ and $m_i$ and each $1 \leq i \leq n$, where $k_1 + \ldots + k_i \le p + i-2$ for every $1 \leq i \leq n$.

Then 
$$\gamma_{p-k_1- \cdots - k_n + n-1, \xi}(D) \ge \mu(\xi \cdot \zeta_p, u, \chi) - \sum_{i=1}^{n} m_i.$$ 

The situation is illustrated in the part of a Brauer tree given in Figure~\ref{BrauerTreeProp1}. 
\end{proposition}

\[
\begin{tikzpicture}

%horizontal line 
\draw (0,1.5)--(3.5,1.5);
% oblique lines
\draw (0.5,0.5)--(2,1.5);
\draw (0.5,2.5)--(2,1.5);

%nodes
\node at (3.5,1.5) (1){};
\node at (2,1.5) (2){};
\node at (0.5,0.5) (3){};
\node at (0,1.5) (4){};
\node at (0.5,2.5) (5){};

%circles
\foreach \p in {1,2,3,4,5}{
   \draw[fill=white] (\p) circle (.08cm);
}

%labels
\node[label=north:{\Small{$E_1$} }] at (1.4,1.8) {};
\node[label=north:{\Small{$E_i$} }] at (1,1.3) {};
\node[label=north:{\Small{$E_n$} }] at (1.4,0.4) {};
\node[label=north:{\Small{$M$} }] at (2.1,1.4) {};
\node[label=north:{\Small{$D$} }] at (2.7,1.30) {};
\end{tikzpicture}
\]
\vspace*{-.8cm}
\captionof{figure}{Brauer tree illustration for Propositions~\ref{negative_node} and \ref{positive_node}.}\label{BrauerTreeProp1}

\begin{proof}
By Proposition~\ref{lattice_method} we can consider the direct summand $M_\ell$ of $M$, when viewed as $R\langle u \rangle$-module, on which the $p'$-part of $u$ acts as $\xi$. Abusing notation for the rest of the proof we will understand $M$ to be $M_\ell$. We will apply the same for any other module. So when we speak about the composition factors of $\bar{M}$ as $FG$-module viewed as $F\langle u \rangle$-module we will only mean the direct summand of this module on which the $p'$-part of $u$ acts as $\xi$ (or rather the image of $\xi$ in $F$). Then we also have $\gamma_{k,\xi}(M) = \gamma_k(M)$ for any integer $k$.
 
We will use the theory of $FC_p$-modules introduced in Section~\ref{Combinatorics}. 
Assume without loss of generality that $\bar{M}$ has a submodule $E_1$, whose quotient has a submodule $E_2$, whose quotient has submodule $E_n$ etc. and $D$ is in the head of $\bar{M}$, cf. Figure~\ref{SocleTower}.

\[
\begin{tikzpicture}

%tower%%%%%
%horizontal
\draw (0,0)--(0.6,0);
\draw (0,0.6)--(0.6,0.6);
\draw (0,1.2)--(0.6,1.2);
\draw (0,1.8)--(0.6,1.8);
\draw (0,2.4)--(0.6,2.4);
\draw (0,3)--(0.6,3);
%vertical
\draw (0,0)--(0,3);
\draw (0.6,0)--(0.6,3);
%%%%%%%
% 
% %labels
\node[label=north:{\Small{$D$} }] at (0.3,2.3) {};
\node[label=north:{\Small{$E_n$} }] at (0.3,1.7) {};
\node[label=north:{\vdots }] at (0.3,1.1) {};
\node[label=north:{\Small{$E_2$} }] at (0.3,0.5) {};
\node[label=north:{\Small{$E_1$} }] at (0.3,-0.1) {};

\end{tikzpicture}
\]
\vspace*{-.8cm}
\captionof{figure}{Assumed composition of module $\bar{M}$} \label{SocleTower}

\vspace{0.3cm}

As we are only interested in the isomorphism types of the $E_i$ as $F\langle u^m \rangle $-modules, i.e. $FC_p$-modules, this is actually possible by Lemma~\ref{ModSwap}. Set $Q_0 = \bar{M}$ and $Q_i=Q_{i-1}/E_i$ for each $1 \leq i \leq n$. So in particular, $Q_n \cong D$. For $i \geq 2$ let $T_i$ denote the skew tableau of $Q_{i-1}/E_i = Q_i$ for $1 \leq i \leq n$, i.e. the skew diagram obtained from removing the Young diagram corresponding to $E_i$ from the Young diagram corresponding to $Q_{i-1}$ and filling it with entries such that it becomes a semistandard skew tableau satisfying the lattice property with entries realizing the isomorphism type of $Q_i$.   

We prove the following by induction on $i$: 
 \begin{equation}\label{local_ineq1}
\gamma_{p-k_1-\cdots - k_i + i-1}(Q_i) \ge \mu(\xi \cdot \zeta_p, u, \chi) - \sum_{j=1}^{i} m_j.
\end{equation}
 
Note that the conclusion of the proposition follows from \eqref{local_ineq1} for $i = n$.
 
We proceed with the base case. By assumption the height of the $k_1$-th column of $E_1$ is at most $m_1$. Moreover as $M$ is an $RG$-module and $R$ satisfies the assumptions of Proposition~\ref{lattice_method} we know that $\bar{M}$ has only indecomposable direct summands of dimensions $1$, $p-1$ and $p$ and $\gamma_2(\bar{M}) = \gamma_3(\bar{M}) = \ldots = \gamma_{p-1}(\bar{M}) = \mu(\xi \cdot \zeta_p, u, \chi)$. So $T_1$ contains a full rectangle of height at least $\mu(\xi \cdot \zeta_p, u, \chi)- m_1$ and width $p- k_1$, cf. Figure~\ref{BaseCaseNegative}.

\vspace*{-.3cm}
 \[
\begin{tikzpicture}
\draw  (0,1.5) -- (0,-1) -- (0.5,-1) -- (0.5, 0.5)-- (4.5,0.5) -- (4.5,2) --(5,2) -- (5,4) -- (4.5,4) -- (4.5, 3.5) -- (3.5, 3.5) -- (3.5, 3) -- (1.5,3) -- (1.5, 2) -- (0.5, 2) -- (0.5, 1.5) --(0,1.5);
% 
% %horizontal lines
\draw[red] (2,2.5) -- (4.5, 2.5);
\draw[dashed] (-0.5,2.5) -- (2, 2.5);
\draw[dashed] (-0.5,0.5) -- (2, 0.5);
%vertical lines 
\draw[dashed] (2,3) -- (2, 4.5);
\draw[red, dashed] (2,3) -- (2, 2.5);
\draw[red] (2,2.5) -- (2, 0.5);
\draw[red] (4.5,2) -- (4.5, 2.5);
\draw[dashed] (4.5,4.5)--(4.5,2.5);
% 
% %labels
% \node[label=south:{\Small{$p-k_1 - ...- k_{i-1} + (i-2)$} }] at (4.5, 5) {};
\node[label=south:{\Small{$k_1$} }] at (2, 5) {};
\node[label=south:{\Small{$p-1$} }] at (4.5, 5) {};
\node[label=west:{\Small{$m_1+1$} }] at (-0.5,2.5) {};
\node[label=west:{\Small{$\mu(\xi \cdot \zeta_p, u, \chi)$}  }] at (-0.5,0.5) {};
\node[label=south:{\Small{$T_1$} }] at (1, 1.5) {};

\end{tikzpicture}
\]
\captionof{figure}{Illustrating the base case in the proof of Proposition\ref{negative_node}.}\label{BaseCaseNegative}

\vspace{0.3cm}

Thus $\gamma_{p-k_{1}}(Q_1)\ge \mu(\xi \cdot \zeta_p, u, \chi)- m_1$, by Lemma~\ref{full_rectangle}.

\setlength{\parindent}{12pt}
 
So assume $i>1$. Consider the  $k_i$-th column of $E_i$ which has height at most $m_i$, by hypothesis. By induction hypothesis, the height of the  $(p- k_1-\ldots - k_{i-1} + i-2)$-th column  in $T_{i}$ is at least $\mu(\xi \cdot \zeta_p, u, \chi) - \sum_{j=1}^{i-1} m_j$. Hence we have a rectangle of height at least $\mu(\xi \cdot \zeta_p, u, \chi) - \sum_{j=1}^{i} m_j$ and width 
$$p- k_1-\ldots - k_{i-1} + i-2 - (k_i -1) = p- k_1-\ldots - k_{i-1} - k_i  + i-1,$$
cf. Figure~\ref{4.1dib2}.
Note that the last number is a non-negative integer by the assumption that $k_1 + \ldots + k_i \leq p +(i-2)$. Thus, the claim follows from  Lemma~\ref{full_rectangle}.
 
 \[
\begin{tikzpicture}
\draw  (0,2) -- (0,-1) -- (2,-1) -- (2,0) -- (3,0) -- (3,1) -- (4,1) -- (4,2) -- (5,2) -- (5,4) -- (4,4) -- (4,3) -- (1,3) -- (1,2) -- (0,2);

%horizontal lines
\draw[dashed] (-0.5,2.5) -- (3.5, 2.5);
\draw[dashed] (-0.5,1.5) -- (3.5, 1.5);
%vertical lines 
\draw[dashed] (1.5,3) -- (1.5, 4.5);
\draw[dashed] (3.5,3) -- (3.5, 4.5);
\draw[red,dashed] (1.5,2.5) -- (1.5, 3);
\draw[red,dashed] (3.5,2.5) -- (3.5, 3);
%labels
\node[label=south:{\Small{$p-k_1 - \ldots- k_{i-1} + i-2$} }] at (4.5, 5) {};
\node[label=south:{\Small{$k_i$} }] at (1.5, 5) {};\node[label=west:{\Small{$m_i+1$} }] at (-0.5,2.5) {};
\node[label=west:{\Small{$\mu(\xi \cdot \zeta_p, u, \chi) - \sum_{j=1}^{i-1} m_j$} }] at (-0.5,1.5) {};
\node[label=south:{\Small{$T_i$} }] at (1.5, 1) {};
%rectangle
\draw[red] (1.5,1.5)  -- (3.5, 1.5) -- (3.5, 2.5) -- (1.5,2.5) -- (1.5,1.5);
\end{tikzpicture}
\]
\captionof{figure}{Illustrating the induction step in the proof of Proposition~\ref{negative_node}}\label{4.1dib2}
\end{proof}

\begin{proposition}\label{positive_node}

Let $M$ be an $RG$-module with character $\chi$ such that $\bar{M}$ as $FG$-module has composition factors $E_1$, $E_2, \ldots ,E_n$ and $D$. 
Assume that when viewed as $F \langle u \rangle$-modules $\gamma_{k_i, \xi}(E_i)\ge m_i$ for certain $k_i$ and $m_i$ and each $1 \leq i \leq n$, where $k_1 + \ldots + k_i \ge (i-1)p + 2$ for every $1 \leq i \leq n$.

Then $$\gamma_{np-k_1-\cdots - k_n + 2, \xi}(D) \le \mu(\xi \cdot \zeta_p, u, \chi) - \sum_{i=1}^{n} m_i.$$ 

The situation is illustrated in the part of a Brauer tree given in Figure~\ref{BrauerTreeProp1}. 
\end{proposition}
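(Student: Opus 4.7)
The plan is to dualize the proof of Proposition~\ref{negative_node}, replacing the lower bounds on heights of full rectangles inside the skew tableaux $T_i$ by upper bounds obtained from Lemma~\ref{colums_between_lines}. After restricting via Proposition~\ref{lattice_method} to the direct summand of $\bar M$ on which the $p'$-part of $u$ acts as $\xi$, I would use Lemma~\ref{ModSwap} to arrange a composition series of $\bar M$ with $E_1$ in the socle, $E_2,\ldots,E_n$ stacked above, and $D$ as the head. Set $Q_0=\bar M$, $Q_i=Q_{i-1}/E_i$ (so $Q_n\cong D$), and let $T_i$ be the semistandard skew tableau realizing $Q_i$ as a quotient of $Q_{i-1}$ by $E_i$; its content is the partition of $Q_i$. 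Writing
\[
s_i:=ip-\sum_{j=1}^{i}k_j+2,
\]
the inductive claim is
\[
\gamma_{s_i}(Q_i)\;\le\;\mu(\xi\cdot\zeta_p,u,\chi)-\sum_{j=1}^{i}m_j,
\]
and the case $i=n$ recovers the proposition.

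The base case $i=0$ is the equality $\gamma_2(\bar M)=\mu(\xi\cdot\zeta_p,u,\chi)$, which is immediate from Proposition~\ref{lattice_method}. For the inductive step I would exploit two constraints on the columns of $T_i$. First, the partition of $E_i$ has at least $m_i$ rows of length $\ge k_i$, and these are by the very definition of a partition its top rows; placing them at the top-left of the Young diagram of $Q_{i-1}$ therefore absorbs at least $m_i$ boxes from the top of each of the first $k_i$ columns, so each column $c\le k_i$ of $T_i$ begins in row $\ge m_i+1$. Second, the induction hypothesis makes column $s_{i-1}$ of the partition of $Q_{i-1}$ have height at most $\mu(\xi\cdot\zeta_p,u,\chi)-\sum_{j<i}m_j$, so by the monotonicity of column heights in a Young diagram every column $c\ge s_{i-1}$ of $T_i$ ends in row $\le \mu(\xi\cdot\zeta_p,u,\chi)-\sum_{j<i}m_j$. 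The assumption $\sum_{j=1}^i k_j\ge (i-1)p+2$ is equivalent to $k_i\ge s_{i-1}$, so the range $s_{i-1}\le c\le k_i$ is non-empty, and there the columns of $T_i$ are confined to a horizontal strip of $\mu(\xi\cdot\zeta_p,u,\chi)-\sum_{j\le i}m_j$ rows starting at row $m_i+1$.

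To convert this confinement into the bound on $\gamma_{s_i}$, I would apply Lemma~\ref{divided_tableau} to split $T_i$ after its first $s_{i-1}-1$ columns, producing a semistandard skew tableau $T_i'$ with the lattice property whose leftmost $k_i-s_{i-1}+1$ columns sit inside the strip just described. Writing $\ell_i$ for the number of columns of $T_i$ (so $\ell_i\le p$), Lemma~\ref{colums_between_lines} applied to $T_i'$ with $c=m_i$, $h=\mu(\xi\cdot\zeta_p,u,\chi)-\sum_{j\le i}m_j$ and $k=\ell_i-k_i$ gives $\gamma_{\ell_i-k_i+1}(T_i')\le h$. Reassembling via Lemma~\ref{divided_tableau} yields $\gamma_{\ell_i-k_i+s_{i-1}}(T_i)\le h$, and using $\ell_i\le p$ together with the monotonicity of $\gamma$ in its index upgrades this to $\gamma_{s_i}(Q_i)\le h$, closing the induction.

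The main technical point to handle carefully is the column-row confinement feeding into Lemma~\ref{colums_between_lines}; concretely, that the top $m_i$ rows of the first $k_i$ columns of the Young diagram of $Q_{i-1}$ are really absorbed by the sub-diagram of $E_i$. Degenerate situations such as $\ell_i\le k_i$, where the parameter $k=\ell_i-k_i$ would be non-positive, are harmless: in that case every letter in the content of $T_i$ appears at most $\ell_i<s_i$ times, so $\gamma_{s_i}(Q_i)=0$ and the asserted bound holds trivially.
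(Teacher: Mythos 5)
Your proof follows the paper's argument essentially line for line: restrict to the $\xi$-isotypic summand via Proposition~\ref{lattice_method}, rearrange the composition series with Lemma~\ref{ModSwap}, prove $\gamma_{s_i}(Q_i)\le\mu(\xi\cdot\zeta_p,u,\chi)-\sum_{j\le i}m_j$ by induction using the column-confinement observation, the split of Lemma~\ref{divided_tableau}, and Lemma~\ref{colums_between_lines}. The only cosmetic difference is that you start the induction at $i=0$ with the identity $\gamma_2(\bar M)=\mu(\xi\cdot\zeta_p,u,\chi)$ rather than doing the $i=1$ step explicitly as the paper does, which is a slight streamlining.

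One small point deserves correction. In your handling of the degenerate case $\ell_i\le k_i$ you assert that ``every letter in the content of $T_i$ appears at most $\ell_i<s_i$ times.'' The inequality $\ell_i<s_i$ is not automatic: for instance $p=5$, $i=1$, $k_1=5$, $\ell_1=5$ gives $s_1=2<\ell_1$. When $m_i\ge 1$ one always has $\ell_i\ge k_i$ (since $E_i$ is a subpartition of $Q_{i-1}$ with a part of length $\ge k_i$), so the only boundary case is $\ell_i=k_i$, where $k=\ell_i-k_i=0$. The clean fix is not to argue $\gamma_{s_i}(Q_i)=0$ but instead to note that in this situation $T_i'$ is entirely confined to the horizontal strip of height $h=\mu(\xi\cdot\zeta_p,u,\chi)-\sum_{j\le i}m_j$, so $\gamma_1(T_i')\le h$, whence $\gamma_{s_{i-1}}(T_i)\le h$ by Lemma~\ref{divided_tableau}; since $k_i\le p$ gives $s_i\ge s_{i-1}$, this closes the step. (The paper, using $k=p-k_i$, has the analogous boundary issue at $k_i=p$ and does not spell it out either.)
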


\begin{proof}
As in the proof of the previous proposition we will work entirely in the part of the modules on which the $p'$-part of $u$ acts as $\xi$. In particular, we have again $\gamma_{k,\xi}(M) = \gamma_k(M)$. Again using Lemma~\ref{ModSwap} we will assume that $\bar{M}$ contains $E_1$ as submodule whose quotient contains $E_2$ whose quotient contains $E_3$ etc. and $D$ is in the head of $\bar{M}$, cf. Figure~\ref{SocleTower}.
Set $Q_0 = \bar{M}$ and $Q_i=Q_{i-1}/E_i$ for $1 \leq i \leq n$. In particular, $Q_n \cong D$. Let $T_i$ denote the corresponding skew tableau of $Q_{i-1}/E_i = Q_i$ for $1 \leq i \leq n$, i.e. as in the proof of the previous proposition $T_i$ is a skew diagram of form corresponding to the Young diagram of $Q_{i-1}$ from which a Young diagram of form corresponding to $E_i$ has been removed and $T_i$ is a semistandard skew tableau satisfying the lattice property realizing the isomorphism type of $Q_i$. We prove the following by induction on $i$:
\begin{equation}\label{local_ineq2}
 \gamma_{ip-k_1-\cdots - k_i + 2}(T_i) \le \mu(\xi \cdot \zeta_p, u, \chi) - \sum_{j=1}^{i} m_j.
\end{equation}

Note that the conclusion of the proposition follows from \eqref{local_ineq2} for $i = n$.

\setlength{\parindent}{12pt}

Consider the  $k_1$-th column of $T_1$. Note that $k_1 \geq 2$ by assumption. By Proposition~\ref{lattice_method} the height of the $k_1$-th column of a Young diagram corresponding to $Q_0$ has height at most $\mu(\xi \cdot \zeta_p, u, \chi)$. So the height of the $k_1$-th column of $T_1$ is at most $\mu(\xi \cdot \zeta_p, u, \chi) - m_1$ as $\gamma_{k_1}(E_1)\ge m_1$ by assumption. Divide $T_1$ into two skew tableaux $T'$ and $T''$ by a vertical line in such a way that $T''$ is the first column of $T_1$ and $T'$ is the rest, cf. Figure~\ref{BaseCasePositive}.

 \[
\begin{tikzpicture}
\draw  (0,1.5) -- (0,-1) -- (0.5,-1) -- (0.5, 0.5)-- (4.5,0.5) -- (4.5,2) --(5,2) -- (5,4) -- (4.5,4) -- (4.5, 3.5) -- (4, 3.5) -- (4, 3) -- (2,3) -- (2, 2) -- (0.5, 2) -- (0.5, 1.5) --(0,1.5);
% 
% %horizontal lines

\draw[dashed] (-0.5,3.5) -- (0.5, 3.5);
\draw[red, dashed] (0.5,3.5) -- (3.5, 3.5);
\draw[dashed] (-0.5,0.5) -- (2, 0.5);
%vertical lines 
\draw[line width=0.1cm, gray] (0.5, -1.5) -- (0.5, 4.5);
\draw[dashed] (3.5,3.5) -- (3.5, 4.5);
\draw[red, dashed] (3.5,3.5) -- (3.5, 0.5);

% 
% %labels

\node[label=south:{\Small{$k_1$} }] at (3.5, 5) {};
\node[label=south:{\Small{$2$} }] at (0.5, 5) {};
\node[label=west:{\Small{$m_1+1$} }] at (-0.5,3.5) {};
\node[label=west:{\Small{$\mu(\xi \cdot \zeta_p, u, \chi)$}  }] at (-0.5,0.5) {};
\node[label=south:{\Small{$T''$} }] at (0.25, 0.3) {};
\node[label=south:{\Small{$T'$} }] at (2.5, 2) {};
\end{tikzpicture}
\]

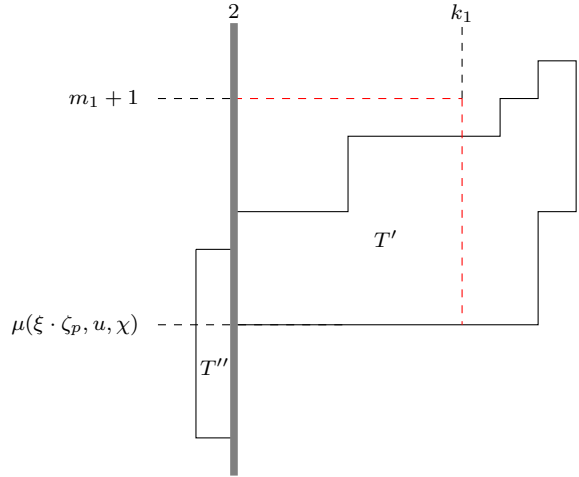
\captionof{figure}{Illustrating the base case in the proof of Proposition~\ref{positive_node}.}\label{BaseCasePositive}

\vspace{0.5cm}

As $T_1$ satisfies the lattice property so does $T'$, by Lemma~\ref{divided_tableau}. Moreover, $T'$ is semistandard, so we are in a situation to apply Lemma~\ref{colums_between_lines} to $T'$. Hence $\gamma_{p-k_{1}+1}(T')\le \mu(\xi \cdot \zeta_p, u, \chi)-m_1$. Further, it follows, by Lemma~\ref{divided_tableau}, that $\gamma_{p-k_{1}+2}(T_1)\le\gamma_{p-k_{1}+1}(T')\le \mu(\xi \cdot \zeta_p, u, \chi)-m_1$ which proves the base case.

\setlength{\parindent}{12pt}

Now assume $i>1$.  Consider the  $k_i$-th column of the skew tableau $T_i$. By induction hypothesis we have that the height of the  $((i-1)p - k_1 -\ldots - k_{i-1} + 2)$-th column of $T_{i}$ is at most $\mu(\xi \cdot \zeta_p, u, \chi) - \sum_{j=1}^{i-1} m_j$. As we are assuming that $k_1 + \ldots + k_i \ge (i-1)p + 2$, the  $((i-1)p - k_1 - \ldots - k_{i-1} + 2)$-th column is west of the $k_i$-th column. Since $\gamma_{k_i}(E_i)\ge m_i$ and the height of the  $((i-1)p - k_1 - \ldots - k_{i-1}+ 2)$-th column in the skew tableau $T_{i}$ is at most $\mu(\xi \cdot \zeta_p, u, \chi) - \sum_{j=1}^{i-1} m_j$, all columns from the $((i-1)p - m_1 - \ldots - m_{i-1}+ 2)$-th column to the  $k_i$-th column are between the $(m_i+1)$-th and $(\mu(\xi \cdot \zeta_p, u, \chi) - \sum_{j=1}^{i-1} m_j)$-th row, cf. Figure~\ref{VirtualTableuaBox}.

Now we divide $T_i$ into two skew tableaux $T'$ and $T''$ by a vertical line in such a way that $T''$ is formed by the first $(i-1)p - k_1 - \ldots - k_{i-1}+ 1$ columns of $T_i$ and $T'$ is the rest. As $T_i$ satisfies the lattice property so does $T'$, by Lemma~\ref{divided_tableau}. Moreover, $T'$ is semistandard, which allows us to apply Lemma~\ref{colums_between_lines} to $T'$. Hence $\gamma_{p-k_{i}+1}(T')\le \mu(\xi \cdot \zeta_p, u, \chi)- \sum_{j=1}^{i} m_j$. Therefore,  
$$\gamma_{(i-1)p - k_1 -\cdots - k_{i-1} + 1 + (p-k_{i}+1)}(T_i) = \gamma_{ip-k_1-\cdots - k_i + 2}(T_i) \le \gamma_{p-k_{i}+1}(T') \leq \mu(\xi \cdot \zeta_p, u, \chi)- \sum_{j=1}^{i} m_j,$$
by Lemma~\ref{divided_tableau}. Thus, the induction step follows.

\[
\begin{tikzpicture}
\draw  (0,2) -- (0,-1) -- (1,-1) -- (1,0) -- (3,0) -- (3,1) -- (4,1) -- (4,2) -- (5,2) -- (5,4) -- (4,4) -- (4,3) -- (1,3) -- (1,2) -- (0,2);

%horizontal lines
\draw[dashed] (-0.5,3.5) -- (1.5, 3.5);
\draw[red, dashed] (1.5, 3.5)-- (3.5, 3.5);
\draw[dashed] (-0.5,-0.5) -- (1.5,-0.5); 
\draw[red, dashed] (1.5,-0.5) -- (3.5, -0.5);

%vertical lines 
\draw[dashed]  (1.5,3.5) -- (1.5, 4.5);
\draw[line width=0.1cm, gray] (1.5,4) -- (1.5, -1);
\draw[dashed] (3.5,4.5) -- (3.5, 3.5);
\draw[red,dashed] (3.5,3.5)--(3.5,-0.5);

%labels
\node[label=south:{\Small{$k_i$} }] at (3.5, 5) {};
\node[label=south:{\Small{$(i-1)p-k_1 - \ldots - k_{i-1} + 2$} }] at (0.5, 5) {};\node[label=west:{\Small{$m_i+1$} }] at (-0.5,3.5) {};
\node[label=west:{\Small{$\mu(\xi \cdot \zeta_p, u, \chi) - \sum_{j=1}^{i-1} m_j$} }] at (-0.5,-0.5) {};
\node[label=south:{\Small{$T''$} }] at (0.8, 1.3) {};
\node[label=south:{\Small{$T'$} }] at (2.5, 2) {};

\end{tikzpicture}
\]

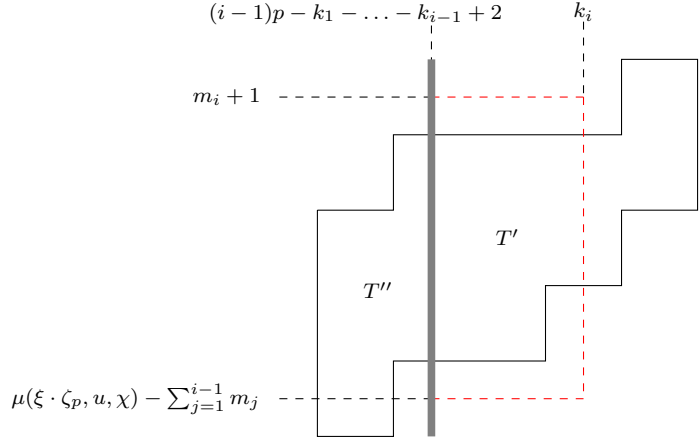
\captionof{figure}{Illustrating proof of induction step in Proposition~\ref{positive_node}}\label{VirtualTableuaBox}
\end{proof}

We now show which consequences can be derived inductively from our previous propositions assuming the exceptional vertex is not involved.

\begin{proposition}\label{general_tree}
Let $M$ be a simple $RG$-module with character $\chi$ lying in a $p$-block $B$ of cyclic defect. Assume that $D$ is a composition factor of $\bar{M}$ when viewed as $FG$-module. We denote the subtree of the Brauer tree of $B$ consisting of the vertices lying to the same side of $D$ as $\chi$ as $S$. I.e. $S$ is the connected component containing $\chi$ of the graph one would get when $D$ would be removed.
We also view $\chi$ as an element of $S$ and moreover assume that $S$ does not contain the exceptional vertex. Let $a$ be the number of vertices in $S$. 

Then:
 \begin{itemize}
 \item[(1.a)]\label{a} If $\delta_{\chi}=-1$, then $\gamma_{p-a, \xi}(D)\ge - \sum_{\psi\in S}\delta_{\psi}\cdot \mu(\xi \cdot \zeta_p, u, \psi)$,
 \item[(1.b)] If $\delta_{\chi}=1$, then $\gamma_{a+1, \xi}(D)\le  \sum_{\psi\in S}\delta_{\psi}\cdot \mu(\xi \cdot \zeta_p, u, \psi)$.
 \end{itemize}

Moreover, if $S$ contains a leaf of the whole Brauer tree of $B$ labeled by a positive sign $+1$, say $\chi_1$, then: 

\begin{itemize}
 \item[(2.a)] If $\delta_{\chi}=-1$, then $\gamma_{p-a+1,\xi}(D)\ge - \mu(\xi , u, \chi_1) - \sum_{\psi\in S}\delta_{\psi}\cdot \mu(\xi \cdot \zeta_p, u, \psi)$,
 \item[(2.b)] If $\delta_{\chi}=1$, then $\gamma_{a,\xi}(D)\le \mu(\xi , u, \chi_1) + \sum_{\psi\in S}\delta_{\psi}\cdot \mu(\xi \cdot \zeta_p, u, \psi)$.
\end{itemize}

\[
\begin{tikzpicture}
\draw (0,0) -- (1.4,0) -- (1.4,1.4) -- (0,1.4) -- (0,0);
\draw (1.4,0.7) -- (3.5, 0.7);

%%%%%%%

\node at (1.4,0.7) (1){};
\node at (3.5, 0.7) (2){};
\node at (0.7, 0.7) (3){};

%%%%%%%

\node[label=south:{$\chi$ }] at (1.8, 0.8) {};
\node[label=north:{$D$ }] at (2.5,0.5) {};
\node[label=north:{$S$ }] at (0.7,0.3) {};

%%%%%%%

\draw[fill=white]  (1) circle (.1cm);
\draw[fill=white]  (2) circle (.1cm);
 
\end{tikzpicture}
\] 
\captionof{figure}{Illustrating Proposition~\ref{general_tree}}\label{BrauerBox}

\end{proposition}

\begin{proof}
Note that as $S$ does not contain the exceptional vertex all the simple $KG$-modules corresponding to characters in $S$ are characters of $RG$-modules.
As in the proof of the preceding proposition we will work only in the parts of $R\langle u \rangle$-modules and $F\langle u \rangle$-modules on which the $p'$-part of $u$ acts as $\xi$. 
Label the edges different from $D$ and adjacent to $\chi$ by $E_1, \ldots ,E_n$ and the subtrees adjacent to these edges by $B_1, \ldots B_n$ respectively, cf. Figure~\ref{BrauerBoxProof}. 

\[
\begin{tikzpicture}

%horizontal line 
\draw (0,1.5)--(3.5,1.5);
% oblique lines
\draw (0.5,0.25)--(2,1.5);
\draw (0.5,2.75)--(2,1.5);

%squares
% top
\draw (-0.3, 2.35) -- (0.5, 2.35) -- (0.5, 3.15) -- (-0.3, 3.15) -- (-0.3, 2.35);
%middle
\draw (-0.8, 1.1) -- (0,1.1) -- (0,1.9) -- (-0.8, 1.9) -- (-0.8, 1.1);
%bottom
\draw (-0.3, -0.15) -- (0.5,-0.15) -- (0.5, 0.65) -- (-0.3, 0.65) -- (-0.3, -0.15);

%nodes
 \node at (3.5,1.5) (1){};
 \node at (2,1.5) (2){};
% \node at (1,0.5) (3){};
% \node at (0.5,1.5) (4){};
% \node at (1,2.5) (5){};

%circles
 \foreach \p in {1,2}{
    \draw[fill=white] (\p) circle (.08cm);
 }

%labels
 \node[label=north:{\Small{$E_1$} }] at (1.3,2) {};
 \node[label=north:{\Small{$E_i$} }] at (0.9,1.3) {};
 \node[label=north:{\Small{$E_n$} }] at (1.4,0.3) {};
 \node[label=north:{\Small{$\chi$} }] at (2.1,1.4) {};
 \node[label=north:{\Small{$D$} }] at (2.7,1.35) {};
 \node[label=west:{\Small{$B_1$} }] at (0.6,2.7) {};
 \node[label=west:{\Small{$B_i$} }] at (0.05,1.5) {};
\node[label=west:{\Small{$B_n$} }] at (0.6,0.2) {};
 \end{tikzpicture}
\]
\captionof{figure}{Illustrating proof of Proposition~\ref{general_tree}}\label{BrauerBoxProof}

\vspace{0.5cm}

We first prove (1.a) and (1.b) arguing by simultaneous induction on $a$. So assume first that $a = 1$, i.e. $\chi$ is a leaf and $D \cong \bar{M}$ as $FG$-module. Then by Proposition~\ref{lattice_method} we know $\gamma_2(D) = \gamma_3(D) = ... = \gamma_{p-1}(D) = \mu(\xi \cdot \zeta_p, u, \chi)$. So (1.a) and (1.b) hold. 

% When $\delta_{\chi}=-1$ we can deduce from Proposition~\ref{lattice_method} that $\gamma_{p-1}(D) = \mu(\xi \cdot \zeta_p, u, \chi)$. When $\delta_{\chi}=1$ we have $\chi_1 = \chi$ and again by Proposition~\ref{lattice_method} we obtain $\gamma_{1}(D) \le \mu(\xi, u, \chi_1) + \mu(\xi \cdot \zeta_p, u, \chi)$  and $\gamma_{2}(D) = \mu(\xi \cdot \zeta_p, u, \chi)$.
 
So assume $a>1$. Note that as the whole Brauer tree of $B$ has at most $p$ vertices we have 
\begin{align}\label{eq_B_i}
|B_1|+|B_2|+ \ldots + |B_n|\le p-2 \ \ \text{and} \ \ |B_1| + \ldots + |B_n| + 1 = a.
\end{align}

Assume first that $\delta_{\chi}=-1$. By induction hypothesis we have by (1.b) that 
$$\gamma_{|B_i|+1}(E_i)\le \sum_{\psi\in B_i}\delta_{\psi}\cdot \mu(\xi \cdot \zeta_p, u, \psi) \ \ \text{for} \; 1\le i \le n.$$

\eqref{eq_B_i} implies that $(|B_1| + 1) + (|B_2| + 1) + \ldots + (|B_i| + 1) \leq p + i - 2$ for any $1 \leq i \leq n$.
Hence by \eqref{eq_B_i} and Proposition~\ref{negative_node}
\begin{multline*}
\gamma_{p-(|B_1|+1)-(|B_2|+1)-\cdots -(|B_n|+1) + n-1}(D)  =   \gamma_{p-a}(D)  \ge \\ 
\mu(\xi \cdot \zeta_p, u, \chi)- \sum_{\psi\in B_1 \cup B_2 \cup \ldots \cup B_n}\delta_{\psi}\cdot \mu(\xi \cdot \zeta_p, u, \psi) =
-\sum_{\psi\in S}\delta_{\psi}\cdot \mu(\xi \cdot \zeta_p, u, \psi),
\end{multline*} which gives us (1.a).

Now assume $\delta_\chi = 1$. By induction hypothesis we know by (1.a) that 
$$\gamma_{p-|B_i|}(E_i)\ge - \sum_{\psi \in B_i}\delta_{\psi}\cdot \mu(\xi \cdot \zeta_p, u, \psi) \ \ \text{for} \; 1\le i \le n.$$
Note that for any $1 \leq i \leq n$ we have $(p-|B_1|)+(p-|B_2|)+ \ldots +(p-|B_i|)\ge (i-1)p + 2$ if and only if $|B_1|+|B_2|+ \ldots +|B_i| + 2 \le p$. As the latter  holds by \eqref{eq_B_i},  we are in a situation to apply Proposition~\ref{positive_node} using the induction hypothesis for (1.a). Hence 
 $$\gamma_{np-(p-|B_1|)-(p-|B_2|)-\cdots -(p-|B_n|) + 2}(D)= \gamma_{a+1}(D	) \le  \sum_{\psi \in S}\delta_{\psi} \cdot \mu(\xi \cdot \zeta_p, u, \psi),$$
 which is the statement of (1.b).

\setlength{\parindent}{12pt}

From now on assume that $S$ contains a leaf labeled by $+1$ called $\chi_1$. We again argue by simultaneous induction on $a$ to prove (2.a) and (2.b). The arguments are similar in this case to the previous ones. If $a = 1$, then necessarily $\delta_\chi = 1$ and as $\gamma_{1}(D) \le \mu(\xi, u, \chi_1) + \mu(\xi \cdot \zeta_p, u, \chi)$ by Proposition~\ref{lattice_method} we know that (2.b) holds.

We first prove (2.a), so assume $\delta_\chi = -1$, $a > 1$ and w.l.o.g. that $\chi_1$ is contained in $B_1$. By (1.b) we have again
$$\gamma_{|B_i|+1}(E_i)\le \sum_{\psi\in B_i}\delta_{\psi}\cdot \mu(\xi \cdot \zeta_p, u, \psi) \ \ \text{for} \; 1\le i \le n$$
and by induction hypothesis on (2.b) we also know
$$\gamma_{|B_1|}(E_1)\le \mu(\xi , u, \chi_1) + \sum_{\psi\in B_1}\delta_{\psi}\cdot \mu(\xi \cdot \zeta_p, u, \psi).$$
Similar as before $|B_1|+(|B_2|+1)+ \ldots + (|B_i|+1)\le p + i - 2$ by \eqref{eq_B_i} for any $1 \leq i \leq n$. So by \eqref{eq_B_i} and Proposition~\ref{negative_node}
$$\gamma_{p-(|B_1|)-(|B_2|+1)-\cdots -(|B_n|+1) + n-1}(D)=\gamma_{p-a+1}(D) \ge - \mu(\xi , u, \chi_1) - \sum_{\psi\in S}\delta_{\psi}\cdot \mu(\xi \cdot \zeta_p, u, \psi),$$ 
as desired.

Next assume $\delta_\chi = 1$. To prove (2.b) we make again use of (1.a) to get
$$\gamma_{p-|B_i|}(E_i)\ge - \sum_{\psi \in B_i}\delta_{\psi}\cdot \mu(\xi \cdot \zeta_p, u, \psi) \ \ \text{for} \; 1\le i \le n$$
and moreover by induction hypothesis on (2.a) we have
 $$\gamma_{p-|B_1|+1}(E_1)\ge - \mu(\xi , u, \chi_1) - \sum_{\psi \in B_1}\delta_{\psi}\cdot \mu(\xi \cdot \zeta_p, u, \psi).$$
Observe that $(p-|B_1|+1)+(p-|B_2|)+ \ldots +(p-|B_i|) \ge (i-1)p + 2$ for any $1 \leq i \leq n$, so we are in a position to apply Proposition~\ref{positive_node} to obtain
$$\gamma_{np-(p-|B_1|+1)-(p-|B_2|)-\cdots -(p-|B_n|) + 2}(D)=\gamma_{a}(D) \le \mu(\xi , u, \chi_1) + \sum_{\psi \in S}\delta_{\psi}\cdot \mu(\xi \cdot \zeta_p, u, \psi).$$
This finishes the proof.

\end{proof}

We are now ready to obtain an equality on the eigenvalues of $u$ for the whole block $B$ in case it is of defect $1$.

\begin{theorem}\label{main_inequality}
Let $G$ be a finite group, $p$ an odd prime and $B$ be a $p$-block of defect 1 with non-exceptional ordinary characters $\chi_1, \ldots ,\chi_e$  such that $\chi_1$ is a leaf. Let $\theta_1, \ldots ,\theta_t$ be the exceptional characters in $B$. Set $\chi_{e+1}=\theta_1+ \ldots +\theta_t$. Let $\delta_i$ be the sign of $\chi_i$ such that $\delta_1=1$. Let $u\in V(\ZZ G)$ be of order $pm$ with $p$ not dividing $m$ and let $\xi$ be any $m$-th root of unity. Then 
$$0 \le \mu(\xi , u, \chi_1) + \delta_{e+1}\cdot \mu(\xi \cdot \zeta_p, u, \chi_{e+1})+ t \sum_{i=1}^{e}\delta_i\cdot \mu(\xi \cdot \zeta_p, u, \chi_i).$$ 
\end{theorem}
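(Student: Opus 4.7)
\emph{Plan.} The plan is to cut the Brauer tree $Y$ of $B$ at a well-chosen edge $D^{\ast}$, produce a one-sided bound on $\gamma_{\ast}(D^{\ast})$ from the component of $Y\setminus\{D^{\ast}\}$ not containing the exceptional vertex $\chi_{e+1}$ via Proposition~\ref{general_tree}, and a matching bound from the opposite component using Corollary~\ref{cor_GaloisRep} together with Propositions~\ref{negative_node} and \ref{positive_node}. Eliminating $\gamma_{\ast}(D^{\ast})$ between the two bounds then yields exactly the claimed inequality.

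Concretely, I would take $D^{\ast}$ to be the edge of $Y$ adjacent to $\chi_{e+1}$ lying on the unique path from $\chi_{e+1}$ to $\chi_{1}$, and write $\chi_{i_{0}}$ for the other endpoint of $D^{\ast}$. Deleting $D^{\ast}$ splits $Y$ into two components: $E$ containing $\chi_{i_{0}}$ (and $\chi_{1}$) and $F$ containing $\chi_{e+1}$. Since $E$ does not contain the exceptional vertex, Proposition~\ref{general_tree} applies directly to $M=M_{i_{0}}$, $D=D^{\ast}$ and subtree $E$; because $\chi_{1}\in E$ is the positive leaf, I would use part (1)(b) or (2)(b) (selected according to $\delta_{i_{0}}$) to capture the $\mu(\xi,u,\chi_{1})$ term, obtaining a bound on $\gamma_{\ast}(D^{\ast})$ of the form $\pm\bigl(\mu(\xi,u,\chi_{1})+\sum_{\psi\in E}\delta_{\psi}\mu(\xi\zeta_{p},u,\psi)\bigr)$. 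For the opposite side, Corollary~\ref{cor_GaloisRep} supplies a simple $RG$-module $M_{e+1}$ affording $\chi_{e+1}$; its reduction $\overline{M_{e+1}}$ has composition factors equal to the edges of $Y$ adjacent to $\chi_{e+1}$, each with multiplicity $t$ (since $\chi_{e+1}=\theta_{1}+\cdots+\theta_{t}$). The non-$D^{\ast}$ edges adjacent to $\chi_{e+1}$ lead to subtrees of $F$ not containing the exceptional vertex, so Proposition~\ref{general_tree} part (a) bounds their $\gamma_{\ast}$. I would feed these, together with the $E$-side bound applied to the $t-1$ extra copies of $D^{\ast}$ in $\overline{M_{e+1}}$, into Propositions~\ref{negative_node} or \ref{positive_node} applied to $M_{e+1}$ with $D^{\ast}$ as the distinguished factor, obtaining a matching bound on $\gamma_{\ast}(D^{\ast})$ involving $\delta_{e+1}\mu(\xi\zeta_{p},u,\chi_{e+1})$ and the non-exceptional characters in $F\setminus\{\chi_{e+1}\}$.

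Combining the two bounds on $\gamma_{\ast}(D^{\ast})$ and rearranging yields the theorem, with $\mu(\xi,u,\chi_{1})$ acquiring coefficient $1$, $\mu(\xi\zeta_{p},u,\chi_{e+1})$ acquiring coefficient $\delta_{e+1}$, and each $\mu(\xi\zeta_{p},u,\chi_{i})$ for $i=1,\ldots,e$ acquiring coefficient $t\delta_{i}$, mirroring the weighting already visible in Lemma~\ref{CharacterFromulaBrauerTree}. The main obstacle is the bookkeeping on the exceptional side: tracking the $t$-fold multiplicity through Propositions~\ref{negative_node} and \ref{positive_node}, handling the $t-1$ extra copies of $D^{\ast}$ in $\overline{M_{e+1}}$, and matching the $\gamma$-indices across the two sides so that the combined bound is an inequality on the same quantity $\gamma_{\ast}(D^{\ast})$. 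A key consistency check is that the $\gamma$-indices arising from the two sides complement to a value of the form $p+1$, ensuring the combined inequality is meaningful.
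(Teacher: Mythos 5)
Your plan matches the paper's proof essentially step for step: use Corollary~\ref{cor_GaloisRep} to realize $\chi_{e+1}$ by a simple $RG$-module, apply Propositions~\ref{negative_node}/\ref{positive_node} to that module (feeding in Proposition~\ref{general_tree} part (a) estimates for the edges around the exceptional vertex, with multiplicity $t$ for edges other than $D^{\ast}$ and $t-1$ for the extra copies of $D^{\ast}$), and compare the resulting bound on $\gamma_{\ast}(D^{\ast})$ with the one from Proposition~\ref{general_tree} part (b) applied from the $\chi_1$-side. One small correction to your final consistency check: the two bounds are on the same quantity $\gamma_{|B_1|}(D^{\ast})$ -- the indices must coincide exactly, not complement to $p+1$ (the $p+1$ complementarity you noticed is between the (a) and (b) clauses within Proposition~\ref{general_tree}, not between the two sides of the cut).
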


\begin{proof}
As in the previous proofs we will work in those summands of $R\langle u \rangle$-modules and $F\langle u \rangle$-modules on which the $p'$-part of $u$ acts as $\xi$.

\setlength{\parindent}{12pt}

By Corollary~\ref{cor_GaloisRep} we know that $\chi_{e+1}$ is the character of an $RG$-module $M$. Label the exceptional vertex of the Brauer tree by $M$. Let $E_1, \ldots ,E_n$ be simple $FG$-modules labeling the edges of the Brauer tree adjacent to the exceptional vertex and let $B_1, \ldots ,B_n$ be subtrees which are adjacent to the edges $E_1, \ldots ,E_n$ respectively, cf. Figure~\ref{AroundExceptional}.

\[
\begin{tikzpicture}

%horizontal line 
\draw (0,1.5)--(2,1.5);
% oblique lines
\draw (0.5,0.25)--(2,1.5);
\draw (0.5,2.75)--(2,1.5);

%squares
% top
\draw (-0.3, 2.35) -- (0.5, 2.35) -- (0.5, 3.15) -- (-0.3, 3.15) -- (-0.3, 2.35);
%middle
\draw (-0.8, 1.1) -- (0,1.1) -- (0,1.9) -- (-0.8, 1.9) -- (-0.8, 1.1);
%bottom
\draw (-0.3, -0.15) -- (0.5,-0.15) -- (0.5, 0.65) -- (-0.3, 0.65) -- (-0.3, -0.15);

%nodes
 \node at (2,1.5) (1){};
% \node at (2,1.5) (2){};
% \node at (1,0.5) (3){};
% \node at (0.5,1.5) (4){};
% \node at (1,2.5) (5){};

\draw (1) circle (.2cm);

%circles
 \foreach \p in {1}{
    \draw[fill=white] (\p) circle (.08cm);
 }

%labels
 \node[label=north:{\Small{$E_1$} }] at (1.3,2) {};
 \node[label=north:{\Small{$E_i$} }] at (0.9,1.3) {};
 \node[label=north:{\Small{$E_n$} }] at (1.3,0.2) {};
 \node[label=north:{\Small{$M$} }] at (2,1.5) {};
 %\node[label=north:{\Small{$D$} }] at (2.5,1.35) {};
 \node[label=west:{\Small{$B_1$} }] at (0.6,2.7) {};
 \node[label=west:{\Small{$B_i$} }] at (0.05,1.5) {};
\node[label=west:{\Small{$B_n$} }] at (0.6,0.2) {};
\node[label=west:{\Small{$t$} }] at (2.7,1.5) {};
 \end{tikzpicture}
\]

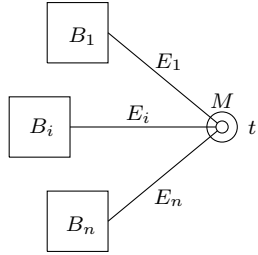
\captionof{figure}{Illustrating proof of Theorem~\ref{main_inequality}}\label{AroundExceptional}

\vspace{0.5cm}

Assume w.l.o.g. that $\chi_1$ lies in $B_1$. The composition factors of $\bar{M}$ as $FG$-module are hence $E_1, \ldots ,E_n$ each with multiplicity $t$. As we are only interested in the isomorphism types of $E_1, \ldots ,E_n$ we can rely on Lemma~\ref{ModSwap} and assume that $E_1$ is in the head of $\bar{M}$. We will study the cases $\delta_{e+1} = -1$ and $\delta_{e+1} = 1$ separately. Our strategy will consist in applying Proposition~\ref{general_tree} where for $E_2, \ldots ,E_n$ we will apply parts (1.a) and (1.b), respectively, each $t$ times, while for $E_1$ we will apply (1.a) exactly $(t-1)$ times, in case $\delta_{e+1} = 1$, or (1.b) $(t-1)$ times, if $\delta_{e+1} = -1$. Comparing with the statement of (2.a) and (2.b) in Proposition~\ref{general_tree} respectively, will then lead to the final inequality.

Assume first $\delta_{e+1}= -1$. 
We apply Proposition~\ref{general_tree} substituting in that Proposition $D$ by $E_i$ and $S$ by $B_i$ to get
% By Proposition~\ref{general_tree} (1.b) we have estimates on dimensions of submodules for $E_1, \ldots ,E_n$ given by

\begin{equation}\label{sign_negative1}
 \gamma_{|B_i|+1}(E_i)\le  \sum_{\psi\in B_i}\delta_{\psi}\cdot \mu(\xi \cdot \zeta_p, u, \psi)
 \end{equation}

and 

\begin{equation}\label{sign_negative2}
\gamma_{|B_1|}(E_1)\le  \mu(\xi , u, \chi_1) + \sum_{\psi\in B_1}\delta_{\psi}\cdot \mu(\xi \cdot \zeta_p, u, \psi).
\end{equation}

As $B$ has defect $1$ we have $t(|B_1|+ \ldots +|B_n|)=p-1$. Let $\mathcal{C}$ be a multiset containing every element of $\{2,\ldots ,n \}$ at most $t$ times and the entry $1$ at most $(t-1)$ times. Then
\begin{equation}\label{EstimateBlockCase1}
\sum_{c \in \mathcal{C}} (|B_c|+1) \leq p + |\mathcal{C}| - 2,
\end{equation}
where we use the fact that $|B_1| \geq 1$.

Let
\begin{align*}
%d &=(|B_2|+1)+\cdots +(|B_n|+1)+ (t-1)[(|B_1|+1)+ (|B_2|+1)+\cdots +(|B_n|+1)]   \\ 
&d = t((|B_2|+1)+ \ldots +(|B_n|+1))+ (t-1)(|B_1|+1) =  tn+p-2-|B_1|\le p + (tn-1) -2. \\
\end{align*}

\setlength{\parindent}{12pt}

We intend to use Proposition~\ref{negative_node} to obtain an estimate for $\gamma_{|B_1|}(E_1)$, different from the one in \eqref{sign_negative2}, using each of the inequalities \eqref{sign_negative1} $t$ times for $2 \leq i \leq n$ and $(t-1)$ times for $i = 1$. I.e. we replace in Proposition~\ref{negative_node} the module $D$ by $E_1$, the $k_i$ by $|B_i|+1$ and the $m_i$ by $\sum_{\psi\in B_i}\delta_{\psi}\cdot \mu(\xi \cdot \zeta_p, u, \psi)$, where $i = 2,...,n$ appears $t$ times while $i=1$ appears $t-1$ times. So overall we factor out $tn-1$ modules from $\bar{M}$ before we obtain $E_1$. So by \eqref{sign_negative1}  and using Proposition~\ref{negative_node}, which we can apply by \eqref{EstimateBlockCase1}, we have 

\begin{multline}\label{inq_E1}
\gamma_{|B_1|}(E_1) = \gamma_{p-tn-p+2+|B_1|+ tn-2}(E_1) = \gamma_{p- d + (tn-1)-1}(E_1) \ge \\
\mu(\xi \cdot \zeta_p, u, \chi_{e+1})-(t-1) \sum_{\psi\in B_1}\delta_{\psi}\cdot \mu(\xi \cdot \zeta_p, u, \psi) - t \sum_{\psi\in B_2 \cup \cdots \cup B_n}\delta_{\psi}\cdot \mu(\xi \cdot \zeta_p, u, \psi). 
\end{multline}

Combining \eqref{sign_negative2} and \eqref{inq_E1} we get 

\begin{align*}
& \mu(\xi \cdot \zeta_p, u, \chi_{e+1})-(t-1) \sum_{\psi\in B_1}\delta_{\psi}\cdot \mu(\xi \cdot \zeta_p, u, \psi) - t \sum_{\psi\in B_2 \cup \cdots \cup B_n}\delta_{\psi}\cdot \mu(\xi \cdot \zeta_p, u, \psi)  \le \\ 
& \mu(\xi , u, \chi_1) + \sum_{\psi\in B_1}\delta_{\psi}\cdot \mu(\xi \cdot \zeta_p, u, \psi).\\
\end{align*}

Therefore, 
$$0\le \mu(\xi , u, \chi_1) - \mu(\xi \cdot \zeta_p, u, \chi_{e+1}) + t \sum_{\psi\in B_1 \cup \cdots \cup B_n}\delta_{\psi}\cdot \mu(\xi \cdot \zeta_p, u, \psi).$$
 This finishes the case $\delta_{e+1} = -1$.

Assume $\delta_{e+1}=1$. By Proposition~\ref{general_tree}, replacing $D$ by $E_i$ and $S$ by $B_i$, we have estimates for dimensions of indecomposable direct summands for $E_1, \ldots ,E_n$ given by
\begin{equation}\label{sign_positive1}
 \gamma_{p-|B_i|}(E_i)\ge - \sum_{\psi\in B_i}\delta_{\psi}\cdot \mu(\xi \cdot \zeta_p, u, \psi)
\end{equation}
  and
\begin{equation}\label{sign_positive2}
\gamma_{p-|B_1|+1}(E_1)\ge - \mu(\xi , u, \chi_1) - \sum_{\psi\in B_1}\delta_{\psi}\cdot \mu(\xi \cdot \zeta_p, u, \psi). 
\end{equation}

 Similarly to the previous case we will apply Proposition~\ref{positive_node} to obtain another estimate on $\gamma_{p-|B_1|+1}(E_1)$. To do so, we need to check the assumptions of this proposition. Using again $t(|B_1|+\ldots +|B_n|)=p-1$ for a multiset $\mathcal{C}$ as above we get
\begin{equation}\label{EstimateBlockCase2}
 \sum_{c \in \mathcal{C}} (p-|B_c|) \geq (|\mathcal{C}|-1)p + 2, 
\end{equation} 
 where we again use $|B_1| \geq 1$.

Let
\begin{align*}
%d &=(p-|B_2|)+\cdots +(p-|B_n|)+ (t-1)[(p-|B_1|)+ (p-|B_2|)+\cdots +(p-|B_n|)]   \\ 
&d =t((p-|B_2|)+ \ldots +(p-|B_n|))+ (t-1)(p-|B_1|)=(tn-2)p +|B_1| +1. \\
\end{align*}
 
We apply Proposition~\ref{positive_node} using \eqref{sign_positive1} exactly $t$ times for $2 \leq i \leq n$ and $(t-1)$ times for $i = 1$. I.e. in that proposition $D$ is replaced by $E_1$, the $k_i$ by $p-|B_i|$ and the $m_i$ by $- \sum_{\psi\in B_i}\delta_{\psi}\cdot \mu(\xi \cdot \zeta_p, u, \psi)$, where $i=2,...,n$ appears $t$ times and $i=1$ appears $t-1$ times. The application of Proposition~\ref{positive_node} is possible by \eqref{EstimateBlockCase2}. We obtain
\begin{align*}
&\gamma_{p-|B_1|+1}(E_1) = \gamma_{(tn-1)p- (tn -2)p - |B_1| - 1 + 2}(E_1) = \gamma_{(tn-1)p - d + 2}(E_1) \\
 &\le \mu(\xi \cdot \zeta_p, u, \chi_{e+1})- (t-1) \left(-\sum_{\psi\in B_1}\delta_{\psi} \mu(\xi \cdot \zeta_p, u, \psi)\right) - t \left(- \sum_{\psi\in B_2 \cup \cdots \cup B_n}\delta_{\psi} \mu(\xi \cdot \zeta_p, u, \psi)\right).
\end{align*} 
 
 Combining this with \eqref{sign_positive2} we have 
 
 \begin{align*}
 &-\mu(\xi , u, \chi_1) - \sum_{\psi\in B_1}\delta_{\psi}\cdot \mu(\xi \cdot \zeta_p, u, \psi)\le \\
 &\mu(\xi \cdot \zeta_p, u, \chi_{e+1})+(t-1) \sum_{\psi\in B_1}\delta_{\psi}\cdot \mu(\xi \cdot \zeta_p, u, \psi) + t \sum_{\psi\in B_2 \cup \cdots \cup B_n}\delta_{\psi}\cdot \mu(\xi \cdot \zeta_p, u, \psi).\\
 \end{align*}
 
 Therefore, 
 $$0\le \mu(\xi , u, \chi_1) + \mu(\xi \cdot \zeta_p, u, \chi_{e+1}) + t \sum_{\psi\in B_1 \cup \cdots \cup B_n}\delta_{\psi}\cdot \mu(\xi \cdot \zeta_p, u, \psi).$$
\end{proof}

In the case of the principal block we can obtain a more explicit result, though we have to additionally assume that our units are of order $pq$ for $q$ a prime.
This proof follows the lines of the proof of the main theorem in \cite{BachleMargolisSymmetric}, but uses the stronger preparatory results obtained above. 

\begin{proof} [Proof of Theorem~\ref{main_theorem}:]
First of all, if $p = q$ the result follows from the fact that the exponents of $G$ and $\V(\ZZ G)$ coincide. If $p=2$, then $G$ is solvable and hence the Prime Graph Question has a positive answer by \cite{Kimmerle2006}. If $ p = 3$, then the result is proved in \cite[Theorem D]{BachleMargolis4primaryII}. So from now on we assume $p > 3$. Assume moreover that there is $u\in V(\ZZ G)$ of order $pq$ but there is no element of order $pq$ in $G$.  
 Denote by $P$ a Sylow $p$-subgroup of $G$. As $P$ is cyclic of order $p$, the principal $p$-block $B_0$ has defect 1. Assume that $\mathbf{1}=\chi_1$, $\chi_2, \ldots ,\chi_e$ are the non-exceptional ordinary irreducible characters in $B_0$, where $\mathbf{1}$ is the principal character, and $\theta_1, \ldots ,\theta_t$ are the exceptional irreducible characters in $B_0$. Set $\chi_{e+1}=\theta_1 + \ldots + \theta_t$. 
  
\setlength{\parindent}{12pt}

Let $y$ be a generator of $P$. Note that $\chi_i(y)=\chi_i(y^j)$ for all $1 \le i\le e+1$ and all $1\le j\le p-1$, as  all $\chi_i$ are $p$-rational by the theory of Brauer trees as explained in Section~\ref{CyclicBlocks}. 
 
Define for $a \in \ZZ G$ 
 $$\nu(a)=\delta_{e+1} \chi_{e+1}(a) + t\sum_{i=1}^{e} \delta_i \cdot \chi_i(a),$$
 where the $\chi_i$ are extended linearly to $\ZZ G$.
 
Note that $\chi_{e+1}$ is an irreducible character over $\QQ(\zeta_{|G|_{p'}})$ by Lemma~\ref{lemma_GaloisRep}, where $|G|_{p'}$ denotes the biggest divisor of $|G|$ coprime to $p$. Then $\chi_1, \ldots ,\chi_{e+1}$ are linearly independent over $\QQ(\zeta_{|G|_{p'}})$ by \cite[Corollary 9.22]{Isaacs1976}. So, $\nu(g)=0$ for every $g\in G$ would contradict the linear independence of the characters over $\QQ(\zeta_{|G|_{p'}})$ and this contradiction, which will follow from the existence of $u$, will provide the proof of the theorem. 
Recall that a torsion unit in $V(\ZZ G)$ is called $p$-regular if it has order not divisible by $p$ and $p$-singular otherwise. 

We will show that $\nu(g) = 0$ for every $g \in G$. We divide this is several steps, namely (a) we prove that $\nu(a) = 0$ for every $p$-regular element $a$ of $\V(\ZZ G)$. This, in particular, shows that $\nu(g) = 0$ for $g$ a $p$-regular element of $G$ and will be used to prove in (b) that $\nu(g) = 0$ if $g$ has order $p$. This will then be used to show in (c) that $\nu(g) = 0$ for any $p$-singular element $g \in G$.\\
 
(a) Let $a\in \V(\ZZ G)$ be a $p$-regular torsion unit. By Lemma~\ref{CharacterFromulaBrauerTree}, if $a$ is a group element then $\nu(a)=0$, so assume $a \notin G$. Then, by Lemma~\ref{lemma_partaugs},  $\varepsilon_h(a)=0$ for every $p$-singular element $h$ of $G$. Hence 
 $$\chi(a)=\sum_{x^G, p\nmid o(x)} \varepsilon_x(a)\chi(x) $$
 for every ordinary character $\chi$ of $G$, where the sum runs over all conjugacy classes of $G$ containing elements whose order is not divisible by $p$. Thus,
 \begin{align*}
  \nu(a) &= \delta_{e+1} \left(\sum_{x^G, p\nmid o(x)} \varepsilon_x(a)\chi_{e+1}(x)\right)   + t\sum_{i=1}^{e} \delta_i \left(\sum_{x^G, p\nmid o(x)} \varepsilon_x(a)\chi_i(x) \right)
  \\&=   \sum_{x^G, p\nmid o(x)} \varepsilon_x(a) \left(\delta_{e+1}\cdot \chi_{e+1}(x) + t\sum_{i=1}^{e} \delta_i\cdot \chi_i(x) \right)  
  \\&= \sum_{x^G, p\nmid o(x)} \varepsilon_x(a) \nu(x) =0
 \end{align*}
 
 \noindent
 showing that $\nu$ vanishes on $p$-regular elements of $\V(\ZZ G)$.\\

(b) We show next that $\nu(y)$ which is sufficient to show that $\nu(g) = 0$ for $g \in G$ any element of order $p$. Since every $\chi_i$, for $1\le i\le e+1$, is $p$-rational we have that $\Tr_{\QQ(\zeta_p)/\QQ}(\chi_i(u^q)\zeta_p)=-\chi_i(u^q)$. Further, Theorem~\ref{main_inequality}   for $\xi = 1$ gives 
 \begin{equation}\label{eq_FirstIneq}
 1 = \mu(1 , u, \chi_1) \geq -\delta_{e+1}\cdot \mu(\zeta_p, u, \chi_{e+1}) - t \sum_{i=1}^{e}\delta_i \cdot \mu(\zeta_p, u, \chi_i).
 \end{equation}
 
We now compute the multiplicities involved in \eqref{eq_FirstIneq} using Proposition~\ref{LuPa}. They are: 
 \begin{align*}
  & \mu(\zeta_p, u, \chi_{e+1}) = \\
  & \frac{1}{pq}\left( \chi_{e+1}(u^{pq}) +\Tr_{\QQ(\zeta_q)/\QQ}(\chi_{e+1}(u^{p})) + \Tr_{\QQ(\zeta_p)/\QQ}(\chi_{e+1}(u^{q})\zeta_p^{-q}) + \Tr_{\QQ(\zeta_{pq})/\QQ}(\chi_{e+1}(u)\zeta_p^{-1})\right) = \\
  & \frac{1}{pq}\left(\chi_{e+1}(1) +\Tr_{\QQ(\zeta_q)/\QQ}(\chi_{e+1}(u^{p})) - \chi_{e+1}(u^{q}) + \Tr_{\QQ(\zeta_{pq})/\QQ}(\chi_{e+1}(u)\zeta_p^{-1})\right)\\
 \end{align*}
 
 and 
 
\begin{align*}
&\sum_{i=1}^{e}\delta_i\cdot \mu(\zeta_p, u, \chi_{i}) = \\
 &\frac{1}{pq}\sum_{i=1}^{e}\delta_i\left( \chi_{i}(u^{pq}) +\Tr_{\QQ(\zeta_q)/\QQ}(\chi_{i}(u^{p})) + \Tr_{\QQ(\zeta_p)/\QQ}(\chi_{i}(u^{q})\zeta_p^{-q}) + \Tr_{\QQ(\zeta_{pq})/\QQ}(\chi_{i}(u)\zeta_p^{-1})\right) = \\
 &\frac{1}{pq}\left(\sum_{i=1}^{e}\delta_i\cdot \chi_{i}(1) +\Tr_{\QQ(\zeta_q)/\QQ}\left(\sum_{i=1}^{e}\delta_i\cdot \chi_{i}(u^{p})\right) - \sum_{i=1}^{e}\delta_i\cdot \chi_{i}(u^{q})  +  \Tr_{\QQ(\zeta_{pq})/\QQ}\left(\sum_{i=1}^{e}\delta_i\cdot \chi_{i}(u)\zeta_p^{-1}\right)\right).\\
\end{align*}

By (a) we have $\nu(1) = \nu(u^p) = 0$. Therefore

\begin{align}\label{first_inequality}
\nu(u^q)& - \Tr_{\QQ(\zeta_{pq})/\QQ}(\nu(u)\zeta_p^{-1}) \nonumber \\ 
=& -(\nu(1) + \Tr_{\QQ(\zeta_{q})/\QQ}(\nu(u^p)) - \nu(u^q) + \Tr_{\QQ(\zeta_{pq})/\QQ}(\nu(u)\zeta_p^{-1})) \nonumber \\
=& -[\delta_{e+1}(\chi_{e+1}(1) + \Tr_{\QQ(\zeta_{q})/\QQ}(\chi_{e+1}(u^p)) - \chi_{e+1}(u^q) + \Tr_{\QQ(\zeta_{pq})/\QQ}(\chi_{e+1}(u)\zeta_p^{-1})) \\
&+ t\sum_{i=1}^e\delta_i(\chi_i(1) + \Tr_{\QQ(\zeta_{q})/\QQ}(\chi_i(u^p)) - \chi_i(u^q) + \Tr_{\QQ(\zeta_{pq})/\QQ}(\chi_i(u)\zeta_p^{-1}))] \nonumber \\
=& pq\left(-\delta_{e+1} \ \mu(\zeta_p,u,\chi_{e+1}) - t\sum_{i=1}^e \delta_i \ \mu(\zeta_p,u,\chi_i) \right) \leq pq. \nonumber
\end{align}
\noindent
Note that this implies that $\nu(u^q)$ is an integer.

%\begin{align*}
%& pq \left(\delta_{e+1}\cdot \mu(\zeta_p, u, \chi_{e+1}) + t\sum_{i=1}^{e}\delta_i\cdot \mu(\zeta_p, u, \chi_i)\right) = \\ 
%& \left( \delta_{e+1}\cdot \chi_{e+1}(1) + t \sum_{i=1}^{e}\delta_i\cdot \chi_{i}(1)\right)  + \left(\delta_{e+1}\cdot\chi_{e+1}(u^{q})  + t\sum_{i=1}^{e}\delta_i\cdot \chi_{i}(u^{q})\right) \\
%& -   \Tr_{\QQ(\zeta_q)/\QQ}\left(\delta_{e+1}\chi_{e+1}(u^{p}) +  t \sum_{i=1}^{e}\delta_i\chi_{i}(u^{p})\right)   -
%\Tr_{\QQ(\zeta_{pq})/\QQ}\left(\delta_{e+1}\chi_{e+1}(u)\zeta_p^{-1} + t\cdot \sum_{i=1}^{e}\delta_i\chi_{i}(u)\zeta_p^{-1}\right) \\ 
% & =\nu(1) + \Tr_{\QQ(\zeta_q)/\QQ}(\nu(u^{p}))- \nu(u^q)+ \Tr_{\QQ(\zeta_{pq})/\QQ}(\nu(u)\zeta_p^{-1}).\\
%\end{align*}
%
%As $1$ and $u^p$ are $p$-regular elements in $\V(\ZZ G)$, the right hand side of \eqref{eq_FirstIneq} equals $$-\delta_{e+1}\cdot \mu(\zeta_p, u, \chi_{e+1}) - t \sum_{i=1}^{e}\delta_i\cdot \mu(\zeta_p, u, \chi_i) = -\frac{1}{pq}\left( - \nu(u^q)+ \Tr_{\QQ(\zeta_{pq})/\QQ}(\nu(u)\zeta_p^{-1})\right),$$ the latter gives rise to the inequality
%
%\begin{equation}\label{first_inequality}
% pq \ge  \nu(u^q) - \Tr_{\QQ(\zeta_{pq})/\QQ}(\nu(u)\zeta_p^{-1}).
%\end{equation}

Let $x_1, \ldots ,x_m$ be representatives of conjugacy classes of elements of order $q$ in $G$ and let $y_1, \ldots ,y_r$ be representatives of conjugacy classes of elements of order $p$ in $G$. Since we are assuming that there is no element of order $pq$ in $G$, we have, by Lemma~\ref{lemma_partaugs}, for every ordinary virtual character $\chi$ of $G$
$$\chi(u)=\sum_{j=1}^{r}\varepsilon_{y_j}(u)\chi(y_j) + \sum_{j=1}^{m}\varepsilon_{x_j}(u)\chi(x_j).$$

%As $\chi_i(y)=\chi_i(y_j)$, for $1\le j\le r$ and every $1 \leq i \leq e+1$ we can write $\chi_i(u)$ as 
%$$\chi_i(u)= \chi_i(y)\sum_{j=1}^{r}\varepsilon_{y_j}(u) + \sum_{j=1}^{m}\varepsilon_{x_j}(u)\chi_i(x_j).$$

From now on set $\varepsilon_p=\sum_{j=1}^{r}\varepsilon_{y_j}(u)$. Note that as $\chi_i(y)=\chi_i(y_j)$, for $1\le j\le r$ and every $1 \leq i \leq e+1$, also $\nu(y) = \nu(y_j)$ for every $j$. Hence we obtain using (a)

%\begin{align}\label{eq_nuy}
% \nu(u) & = \delta_{e+1}\left(\varepsilon_p\cdot\chi_{e+1}(y) + \sum_{j=1}^{m}\varepsilon_{x_j}(u)\chi_{e+1}(x_j)\right)  + t \sum_{i=1}^{e} \delta_i \left(\varepsilon_p\cdot\chi_i(y) + \sum_{j=1}^{m}\varepsilon_{x_j}(u)\chi_i(x_j)\right) \\ 
% &=  \varepsilon_p \cdot\nu(y) + \sum_{j=1}^{m}\varepsilon_{x_j}(u)\left(\delta_{e+1}\cdot\chi_{e+1}(x_j) + t \sum_{i=1}^{e} \delta_i \cdot\chi_i(x_j) \right) \nonumber \\ 
% & = \varepsilon_p \cdot\nu(y) + \sum_{j=1}^{m}\varepsilon_{x_j}(u) \nu(x_j)  = \varepsilon_p\cdot\nu(y). \nonumber
%\end{align}

\begin{align}\label{eq_nuy}
 \nu(u) = \varepsilon_p \cdot\nu(y) + \sum_{j=1}^{m}\varepsilon_{x_j}(u) \nu(x_j)  = \varepsilon_p\cdot\nu(y).
\end{align}

%The last equality holds because $\nu(x_j)=0$ for all $1\le j\le m$ by (a). 
Moreover

\begin{align}\label{eq_nuyq} 
 \nu(u^q)= \nu(y)\sum_{j=1}^{r}\varepsilon_{y_j}(u^q)=\nu(y),
 \end{align}
 
 since $\sum_{j=1}^{r}\varepsilon_{y_j}(u^q)= \varepsilon(u^q) = 1$. Using this information in \eqref{first_inequality} we get 
 $$pq \ge \nu(y)  - \Tr_{\QQ(\zeta_{pq})/\QQ}(\varepsilon_p\cdot\nu(y)\zeta_p^{-1}).$$

As $\nu(y)$ is $p$-rational and $\Tr_{\QQ(\zeta_{pq})/\QQ}(\zeta_p^{-1})=- (q-1)$ the latter becomes
\begin{equation}\label{first_final_inquality}
 pq \ge \nu(y)(1 + \varepsilon_p(q-1)).
\end{equation}

On the other hand, by Theorem~\ref{main_inequality} for $\xi = \zeta_q$ and assuming w.l.o.g $\zeta_p\zeta_q = \zeta_{pq}$ we get $\mu(\zeta_q,u,\chi_1)=0$ and hence 
\begin{align}\label{eq_secondenq}
0\le \delta_{e+1}\cdot \mu(\zeta_{pq}, u, \chi_{e+1}) + t \sum_{i=1}^{e}\delta_i\cdot \mu(\zeta_{pq}, u, \chi_i).
\end{align}
Using Proposition~\ref{LuPa} as above, \eqref{eq_nuyq}, \eqref{eq_nuy} and the fact that $\nu(1)=\nu(u^p)=0$ by (a) we obtain

\begin{align*}
& \delta_{e+1}\cdot \mu(\zeta_{pq}, u, \chi_{e+1}) + t \sum_{i=1}^{e}\delta_i\cdot \mu(\zeta_{pq}, u, \chi_i) =\\ 
  & \frac{1}{pq}\left( \nu(1) + \Tr_{\QQ(\zeta_{q})/\QQ}(\nu(u^p)\zeta_q^{-p}) - \nu(u^q)+ \Tr_{\QQ(\zeta_{pq})/\QQ}(\nu(u)\zeta_{pq}^{-1})\right) =\\ 
&  \frac{1}{pq}\left(  - \nu(u^q)+ \Tr_{\QQ(\zeta_{pq})/\QQ}(\nu(u)\zeta_{pq}^{-1})\right)  =  \frac{1}{pq}\left( - \nu(y) + \varepsilon_p\cdot \nu(y)\Tr_{\QQ(\zeta_{pq})/\QQ}(\zeta_{pq}^{-1})\right) =\\
&\frac{1}{pq}\left(\nu(y)(\varepsilon_p - 1)\right).\\
\end{align*}

Recall that $\nu(u^q) = \nu(y)$ is an integer. So this calculation implies by \eqref{eq_secondenq} that
\begin{equation}\label{second_final_inquality}
 0 \le \nu(y)(\varepsilon_p - 1).
  \end{equation}
% 
% So far, we have gotten the following two inequalities \eqref{first_final_inquality} and \eqref{second_final_inquality}: 
% $$pq \ge \nu(y)(1 + \varepsilon_p(q-1)) \; \ \text{and} \; \ 0 \le \nu(y)(\varepsilon_p - 1).$$

Now we make use of \eqref{first_final_inquality} and \eqref{second_final_inquality} to achieve our purpose. By Lemma~\ref{congrueneces_pclasses} we have $\varepsilon_p \equiv 0  \mod p$ and $\varepsilon_p \equiv 1 \mod q$. Hence either $\varepsilon_p\ge p$ or $\varepsilon_p\le -p$. Assume first $\varepsilon_p\ge p$. By \eqref{second_final_inquality} we have $\nu(y)\ge 0$ and by \eqref{first_final_inquality}
 $$\nu(y)\le \frac{pq}{1+\varepsilon_p(q-1)}\le \frac{pq}{1+ p(q-1)}= 1 + \frac{p-1}{1+p(q-1)}< 2< p.$$
  
Hence $0\le \nu(y) < p$.

Now assume that $\varepsilon_p\le -p$. By \eqref{second_final_inquality} we obtain $\nu(y)\le 0$. By \eqref{first_final_inquality} moreover $-\nu(y)(1 + \varepsilon_p(q-1)) \ge - pq$. So 
$$\nu(y)\ge \frac{-pq}{-1 -\varepsilon_p(q-1)}\ge \frac{-pq}{-1 +p(q-1)}> -p,$$
where in the last inequality we use that $p \neq 3$.
Hence $-p < \nu(y) \le 0$.

Note that if $\chi$ is a $p$-rational character, then $\chi(y) \equiv \chi(1) \mod p$. This can be seen considering the eigenvalues of $D(y)$ for a representation $D$ realizing $\chi$. As each primitive $p$-th root of unity appears the same number of times as an eigenvalue, it follows that $\chi(1) = \mu(1,y,\chi) + (p-1)\mu(\zeta_p, y, \chi)$ and $\chi(y) = \mu(1,y,\chi) - \mu(\zeta_p,y,\chi)$.
%Now as $\chi_i$ is $p$-rational for every $1\le i\le e+1$ and the order of $y$ is $p$, we get 
Hence $\chi_i(y)\equiv \chi_i(1) \mod p$. So $\nu(y)\equiv\nu(1)=0 \mod p$ and due to $-p < \nu(y) < p$ we conclude that $\nu(y)=0$, as desired.\\

(c) Finally let $h \in G$ be an arbitrary $p$-singular element. Then $h$ has order $pm$ for some positive integer $m$ not divisible by $p$. Write $h=h_p h_{p'}$, where $h_p$ and $h_{p'}$ denote the $p$-part and the $p'$-part of $h$, respectively. Then $h_p$ is conjugate in $G$ to $y^i$ for some $i$ and $C_G(h_p)=\langle h_p \rangle \times Q$, for a $p'$-subgroup $Q$ of $G$, as $\langle h_p \rangle$ is a central Sylow subgroup in $C_G(h_p)$. Hence $h_{p'}$ lies in $Q=O_{p'}(C_G(h_p))$. By \cite[(7.7) Theorem]{Navarro98}, $\chi_i(h) = \chi_i(h_p)$. Moreover $\chi_i(y) = \chi_i(h_p)$ and so by (b) we obtain

\begin{align*}
 \nu(h) &= \delta_{e+1}\cdot\chi_{e+1}(h) + t\sum_{i=1}^{e} \delta_i\cdot\chi_i(h) \\ 
% &= \delta_{e+1}\cdot\chi_{e+1}(h_p h_{p'}) + t\sum_{i=1}^{e} \delta_i\cdot\chi_i(h_p h_{p'}) \\ 
 &=  \delta_{e+1}\cdot\chi_{e+1}(h_p) + t\sum_{i=1}^{e} \delta_i\cdot\chi_i(h_p) \\ 
 &= \delta_{e+1}\cdot\chi_{e+1}(y) + t\sum_{i=1}^{e} \delta_i\cdot\chi_i(y) = \nu(y) = 0
\end{align*}

Therefore, we conclude that $\nu(g)=0$ for all $g\in G$.
\end{proof}

\section{A number theoretical result on squarefree values of integer polynomials}\label{sec_numbertheory}

This section gives a proof of the number theoretical result which will be useful to us to give a positive answer to the Prime Graph Question for several infinite series' of groups. The proof of the result presented here was sent to us by Roger Heath-Brown which we reproduce in our own words. By a squarefree integer $n$ we mean a number not divisible by the square of any prime or in other words $\mu(n) \neq 0$ where $\mu$ denotes the M\"obius function.

The connection with number theory is explained by the following facts. If $G$ is a finite group of Lie type over a field with $q$ elements, then the order of $G$ is the product of a power of $q$ with the evaluation of certain cyclotomic polynomials $\Phi_i(X) \in \mathbb{Z}[X]$ at $q$, cf. e.g. \cite[Table 24.1]{MalleTesterman}. The group $G$ typically contains elements of order $\Phi_i(q)$, but not always of order $\Phi_i(q)\Phi_j(q)$ where both $\Phi_i(q)$ and $\Phi_j(q)$ are factors in the order formula of $G$. The order of the corresponding simple group is given by dividing $|G|$ by a certain small number $d$. So the application of Theorem~\ref{main_theorem} to a series of finite almost simple groups of Lie-type needs to answer the question if certain $\Phi_i(q)$ are squarefree numbers, at least after ignoring small prime divisors.

The questions of the density of numbers between all integers or all primes for which a given polynomial $f(X) \in \mathbb{Z}[X]$ admits squarefree values is a classical  problem of number theory which was first studied in \cite{Estermann}. Later contributions to the question were given e.g. in \cite{Ricci33, Erdos53, Hooley67, HeathBrown13, Helfgott14, Reuss15} and strong results have been achieved assuming the abc-conjecture or some variations of it \cite{Granville98, Pasten15}. In our applications we will not consider densities, but only whether a number is known to be infinite. Observe that if $f(X)$ is divisible by the square of an irreducible polynomial in $\mathbb{Z}[X]$, then values of $f$ are obviously not squarefree. 

Results in the literature are mostly of two types: Firstly, it is proved that when $f$ is irreducible with some properties, then there are infinitely many primes $p$ such that $f(p)$ is squarefree. Secondly, if $f$ is not divisible by the square of a polynomial and has some properties, then there are infinitely many integers $n$ such that $f(n)$ is squarefree. For our application we need a mix of both types of statements and hence we provide a full proof. In both types of statements the irreducible factors of $f$ are assumed to be of degree at most $3$ and, to our knowledge, there is no irreducible polynomial of degree at least 4 for which it is known that it has infinitely many squarefree values. This also explains our choice of factors: As explained above the irreducible polynomials of interest to us are cyclotomic polynomials $\Phi_i(X)$ and these are of degree 3 or less if and only if $i\in \{1,2,3,4,6\}$. 

To state our result we need some notation. Throughout this section $p$ and $q$ always denote primes. Define a polynomial in $\mathbb{Z}[X]$ as
$$F(X) = (X^2+1)(X^6-1) = \Phi_1(X)\Phi_2(X)\Phi_3(X)\Phi_4(X)\Phi_6(X) $$
where $\Phi_k(X)$ denotes the $k$-th cyclotomic polynomial. For a positive integer $x$ set
$$N(x) = \#\{p \leq x : F(p) \text{ has no divisor } q^2 \text{ such that } q >3 \} $$
and use the standard notation for the logarithmic integral
$$\Li(x) = \int_2^x \frac{dt}{\log t} .$$

Moreover we make standard use of Landau's $O$ notation, i.e. if $M \subset \mathbb{C}$ and $f:M \rightarrow \mathbb{C}$, $g: M \rightarrow \mathbb{R}_{\geq 0}$ are functions, then $f = O(g)$ means that there exists a real constant $C$ such that $|f(z)| \leq Cg(z)$ for all $z \in M$. If $h: M \rightarrow \mathbb{C}$ is another function we write $f = h +O(g)$, if $f-h = O(g)$.

The aim of this section is the proof of the following theorem.
\begin{theorem}\label{theorem:nt}
There is a global positive constant $c$ such that
$$N(x) = c \cdot \Li(x) + O\left(\frac{x}{\log x \cdot (\log \log x)^{\frac{1}{2}}}\right).$$
In particular $\lim\limits_{x \rightarrow \infty} N(x) = \infty$.
\end{theorem}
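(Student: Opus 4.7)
The plan is to establish the asymptotic by a squarefree sieve on the values $F(p)$ at primes, following the standard Estermann--Hooley framework. The starting point is M\"obius inversion:
\[
 N(x) = \sum_{p \leq x} \sum_{\substack{d^2 \mid F(p) \\ (d,6)=1}} \mu(d) = \sum_{\substack{d \geq 1 \\ (d,6)=1}} \mu(d)\, A(x,d), \qquad A(x,d) := \#\{p \leq x : d^2 \mid F(p)\}.
\]
Every pairwise resultant $\mathrm{Res}(\Phi_i, \Phi_j)$ for distinct $i,j \in \{1,2,3,4,6\}$ is supported on the primes $\{2,3\}$, so no prime $q > 3$ can divide two different factors $\Phi_i(p), \Phi_j(p)$ simultaneously. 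Combined with Hensel's lemma (each $\Phi_k$ has simple roots mod $q$ whenever $q \nmid k$), this makes the count $\rho(d^2)$ of roots of $F$ mod $d^2$ multiplicative in squarefree $d$ coprime to $6$, with $\rho(q^2) = \rho(q)$ bounded independently of $q$ (in fact by $8$).

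Fix a truncation parameter $D = D(x)$ to be optimised. For the small range $d \leq D \leq x^{1/4}/(\log x)^{B}$, the Bombieri--Vinogradov theorem applied to the modulus $d^2$ yields
\[
 \sum_{\substack{d \leq D \\ (d,6)=1}} \mu(d)\, A(x,d) = \Li(x) \sum_{\substack{d \geq 1 \\ (d,6)=1}} \frac{\mu(d)\,\rho(d^2)}{\varphi(d^2)} + O\!\left(\frac{x}{(\log x)^{C}}\right) + O\!\left(\frac{\Li(x)}{D}\right),
\]
for any fixed $C$ (with $B = B(C)$ suitably large). The infinite series converges absolutely because $\rho(q^2)/\varphi(q^2) \ll q^{-2}$, yielding the positive Euler product
\[
c \;=\; \prod_{q > 3}\Bigl(1 - \frac{\rho(q^2)}{\varphi(q^2)}\Bigr) \;>\; 0.
\]

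The heart of the argument is controlling the large-$d$ tail. If $d > D$ is squarefree, coprime to $6$, and $d^2 \mid F(p)$, then some prime $q \mid d$ with $q > D^{1/5}$ divides exactly one factor $\Phi_k(p)$ to the second power, so
\[
 \sum_{\substack{d > D \\ (d,6)=1}} |\mu(d)|\, A(x,d) \;\ll\; (\log x)^{O(1)} \sum_{k \in \{1,2,3,4,6\}} \sum_{q > D^{1/5}} \#\{p \leq x : q^2 \mid \Phi_k(p)\}.
\]
For the linear factors $\Phi_1, \Phi_2$, the condition $q^2 \mid p \mp 1$ confines $p$ to a single invertible class modulo $q^2$, so Brun--Titchmarsh gives $\ll x/\bigl(q^2 \log(x/q^2)\bigr)$, and summation over $q > D^{1/5}$ is routine. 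For the quadratic factors $\Phi_3, \Phi_4, \Phi_6$, Brun--Titchmarsh still suffices as long as $q \leq \sqrt{x}/(\log x)^{B'}$; for larger $q$, writing $\Phi_k(p) = q^2 m$ produces a binary-quadratic-form equation whose solutions $(p, q, m)$ are bounded by a Hooley-style geometry-of-numbers / Gauss reduction argument, giving a bound of shape $x/q^2$ together with an additive loss that is controllable in total.

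The main obstacle is precisely this quadratic, large-$q$ range: each degree-$2$ cyclotomic factor contributes a Diophantine counting problem that must be resolved uniformly in $q$ up to $\asymp x$. Balancing the Bombieri--Vinogradov error $x/(\log x)^C$, the completion tail $\Li(x)/D$, and the large-$d$ sieve error forces a very particular choice of $D$; this optimisation (effectively matching powers of $\log x$ against a Mertens-product factor in the lattice count) is what produces the specific $(\log \log x)^{1/2}$ denominator in the stated error term. Positivity of $c$ is immediate from the Euler product, so $\lim_{x \to \infty} N(x) = \infty$ is a direct consequence of the asymptotic.
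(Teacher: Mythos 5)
The overall architecture is the standard squarefree sieve that the paper also employs, but you diverge from the paper's proof in two substantive ways, and one of your claims is incorrect.

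First, the paper does \emph{not} use Bombieri--Vinogradov. It uses the Siegel--Walfisz theorem (Theorem~\ref{th:Siegel-Walfisz}), which forces a very small truncation: the moduli $d^2$ must satisfy $d^2 \leq (\log x)^{O(1)}$, and since $d$ ranges over divisors of $Q = \prod_{3 < q \leq \xi_1} q$ with $Q \leq \xi_1^{\xi_1}$, the cutoff must be $\xi_1 = (\log\log x)^{1/2}$. The Euler product truncated at $\xi_1$ then deviates from $c$ by $O(\xi_1^{-1})$ (Lemma~\ref{lemma:c}\eqref{lemma:5.5.4}), and this $\xi_1^{-1}\Li(x)$ term is precisely the source of the $(\log\log x)^{1/2}$ in the error; the middle and large ranges (handled with the trivial bound $\pi(x;m,a) \leq x/m+1$ and with Heath-Brown's Theorem~\ref{theo:HB97} on ternary forms, respectively) contribute much smaller errors, $O(x/(\log x)^3)$ and $O(x^{7/8})$.

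This is where your claim goes wrong: you write that balancing the Bombieri--Vinogradov error $x/(\log x)^C$, the completion tail $\Li(x)/D$, and the large-$d$ sieve error ``produces the specific $(\log\log x)^{1/2}$.'' It does not. With $D$ taken up to $x^{1/4}/(\log x)^B$ as you propose, the completion tail is $O(\Li(x)\,D^{-1+\varepsilon})$, a power saving; the BV error is $O(x/(\log x)^C)$ for any fixed $C$; and the large-$q$ contribution is also a power saving. None of these naturally matches $x/(\log x (\log\log x)^{1/2})$. If your dispersion/geometry-of-numbers bound for the degree-two cyclotomic factors can actually be made uniform up to $q \asymp x$, your route would give a strictly \emph{stronger} error term than the theorem claims, not the same one. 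The discrepancy signals that you have reverse-engineered the error term from the statement rather than derived it. (This is not a fatal objection --- a stronger bound still proves the theorem --- but you should not present the stated error term as an output of your optimisation when it is not.)

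Second, the lattice-counting step for $q > \sqrt{x}/(\log x)^{B'}$ is the genuinely delicate part, and you leave it as a gesture towards ``Hooley-style geometry-of-numbers / Gauss reduction.'' The paper makes this precise by invoking Heath-Brown's result that a non-singular ternary quadratic form $T(x_1,x_2,x_3)$ with the hyperplane section $T(0,x_2,x_3)$ also non-singular has only $O((nR)^\varepsilon)$ primitive integer solutions with $x_1=k$ in a box of side $R$ (Theorem~\ref{theo:HB97}). Applied with $\varepsilon = 1/4$ to $T_n = x_1^2 + x_2^2 - nx_3^2$ (and the analogous forms for $\Phi_3, \Phi_6$), and summed over $n \leq 2x^{1/2}$, this gives the $O(x^{7/8})$ bound. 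Your proposal needs a comparable, explicit uniformity statement --- in particular uniformity in the discriminant $n$ --- and a remark that the forms in question are non-singular with non-singular hyperplane sections; asserting that Gauss reduction ``gives a bound of shape $x/q^2$ together with an additive loss that is controllable'' is not enough, since for $q^2 > x$ the bound $x/q^2 < 1$ per $q$ cannot be summed over $\asymp x/\log x$ primes $q$ without losing a factor of $x$. The additive loss is exactly where Heath-Brown's input is doing the work.

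In short: same sieve, different uniformity input in the small range (BV rather than Siegel--Walfisz), an incorrect account of where the stated error term comes from, and a large-$q$ step that is missing the key quantitative input the paper gets from Heath-Brown's ternary-forms theorem.
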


We first introduce some more notation and prove some auxiliary results. We denote by $\varphi$ Euler's totient function and by $\mu$ the M\"obius function. We fix some integer $x$ bigger than $1$. For positive integers $a$, $d$ and $m$ set
\begin{align*}
\rho(d) &= \# \{a \in \mathbb{Z}/d^2\mathbb{Z} : F(a) \equiv 0 \bmod d^2 \},\\
\pi(x;m,a) &= \# \{p \leq x: p \equiv a \bmod m\}, \\
\xi_1 &= (\log \log x)^\frac{1}{2}.
\end{align*}

We will use the following theorems.

\begin{theorem}\label{th:Siegel-Walfisz} \cite[Corollary 11.21]{MontgomeryVaughan} (Special form of Siegel-Walfisz Theorem)\\
Assume $m \leq (\log x)^7$ and $\gcd(a,m) = 1$. Then
$$\pi(x;m,a) = \frac{\Li(x)}{\varphi(m)} + O\left(\frac{x}{(\log x)^7}\right)$$
\end{theorem}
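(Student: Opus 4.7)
The plan is to apply M\"obius inversion and then use Siegel--Walfisz (Theorem~\ref{th:Siegel-Walfisz}) to extract a main term from small moduli, controlling the rest by Brun--Titchmarsh and an elementary argument exploiting the fact that every irreducible factor of $F$ has small degree. Writing the indicator $\sum_{d^2\mid F(p),\ \gcd(d,6)=1}\mu(d)$ for the condition ``$F(p)$ has no divisor $q^2$ with $q>3$'' and swapping the order of summation yields
$$N(x)=\sum_{\substack{d\ge 1\\\gcd(d,6)=1}}\mu(d)\,S_d(x),\qquad S_d(x)=\#\{p\le x : d^2\mid F(p)\}.$$
The condition $d^2\mid F(p)$ places $p$ in one of $\rho(d)$ admissible residue classes modulo $d^2$, each coprime to $d$ since $F(0)=-1$.

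For small moduli $d\le \xi_1$ we have $d^2\le\log\log x\ll(\log x)^7$, so Theorem~\ref{th:Siegel-Walfisz} applies to each admissible class and gives
$$S_d(x)=\rho(d)\frac{\Li(x)}{\varphi(d^2)}+O\!\left(\frac{\rho(d)\,x}{(\log x)^7}\right).$$
Multiplicativity of $d\mapsto\rho(d)/\varphi(d^2)$ turns the resulting sum into a truncated Euler product; extending it to $d\to\infty$ yields
$$\sum_{\substack{d\le\xi_1\\\gcd(d,6)=1}}\mu(d)\,S_d(x)\;=\;c\cdot\Li(x)+O\!\left(\frac{\Li(x)}{\xi_1}\right),\qquad c=\prod_{q>3}\!\left(1-\frac{\rho(q)}{\varphi(q^2)}\right),$$
while the Siegel--Walfisz errors combine (using $\rho(d)\le 8^{\omega(d)}$) to $O(x/(\log x)^{6})$, which is absorbed.

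To control the tail $\sum_{d>\xi_1}\mu(d)\,S_d(x)$ we split into a medium range $\xi_1<d\le x^{1/2-\varepsilon}$ and a large range $d>x^{1/2-\varepsilon}$. In the medium range Brun--Titchmarsh gives $S_d(x)\ll \rho(d)\,x/(\varphi(d^2)\log x)$; since $\rho(q)\le\deg F=8$ for every prime $q$, we have $\rho(d)/\varphi(d^2)\ll d^{\varepsilon-2}$ for squarefree $d$, so the total contribution is $\ll x/(\xi_1\log x)$. In the large range we exploit that each cyclotomic factor $\Phi_i$ of $F$ has degree at most $2$: the inequality $\Phi_i(p)\le 2p^2\le 2x^2$ implies that a prime $q>\sqrt{2}\,x$ cannot satisfy $q^2\mid \Phi_i(p)$, while if $q\mid \Phi_i(p)$ and $q\mid \Phi_j(p)$ for distinct $i,j$, then $q$ divides the nonzero resultant $\operatorname{Res}(\Phi_i,\Phi_j)$, a fixed integer; a direct case analysis for primes $x^{1/2-\varepsilon}<q\le\sqrt{2}\,x$ then fits within the same error budget.

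For each prime $q>3$ one verifies $\rho(q)\le 8<q(q-1)=\varphi(q^2)$, so every Euler factor is strictly positive, and since $\sum_q\rho(q)/\varphi(q^2)\ll\sum_q 1/q^2$ converges, $c>0$ and hence $N(x)\to\infty$. The main obstacle is the calibration of $\xi_1$: it must be small enough for Siegel--Walfisz to apply to every $d$ in the main range (requiring $d^2\ll(\log x)^7$), yet large enough for the tail $\sum_{d>\xi_1}\rho(d)/\varphi(d^2)$ and the extended-product error $\Li(x)/\xi_1$ to be small. The choice $\xi_1=(\log\log x)^{1/2}$ balances these two errors against each other at the stated $O(x/(\log x\cdot\xi_1))$, while the delicate treatment of the large-modulus regime ensures that neither a single prime nor an accidental coincidence among the cyclotomic factors of $F$ contributes anomalously.
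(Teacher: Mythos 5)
The statement you were assigned, Theorem~\ref{th:Siegel-Walfisz}, is the Siegel--Walfisz theorem; the paper does not prove it but cites it from Montgomery--Vaughan, so there is no internal proof to compare against. More importantly, your proposal is not a proof of this theorem at all: it explicitly \emph{invokes} Theorem~\ref{th:Siegel-Walfisz} as an ingredient (which would be circular as a proof of it), and what it actually argues toward is the conclusion of Theorem~\ref{theorem:nt}, namely $N(x) = c\Li(x) + O\bigl(x/(\log x)(\log\log x)^{1/2}\bigr)$. So at the level of ``what is being proved,'' the proposal is off-target.

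Reading your proposal charitably as an attempt at Theorem~\ref{theorem:nt}, the M\"obius-inversion set-up, the Siegel--Walfisz treatment of small moduli, and the truncated Euler-product manipulation all match the paper's computation of the main term and are fine. The genuine gap is in the large-modulus regime. One has to bound the number of primes $p \le x$ such that $q^2 \mid \Phi_i(p)$ for a prime $q > x^{3/4}$; the trivial estimate $\pi(x;q^2,a) \le x/q^2 + 1$ summed over primes $q \le 2x$ already gives $\pi(2x) \gg x/\log x$, which is the size of the main term, so nothing short of a genuinely nontrivial point-counting bound will do. The paper handles this precisely with Theorem~\ref{theo:HB97}, Heath--Brown's upper bound $O((nR)^\varepsilon)$ for primitive solutions of non-singular ternary quadratic forms, applied to $x_1^2 + x_2^2 - n x_3^2$, $x_1^2 + x_2^2 - x_2 - n x_3^2$, $x_1^2 + x_2^2 + x_2 - n x_3^2$, giving a total of $O(x^{7/8})$. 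Your ``direct case analysis for primes $x^{1/2-\varepsilon} < q \le \sqrt{2}\,x$'' supplies no such bound, and the resultant observation only controls a fixed prime dividing two distinct cyclotomic factors simultaneously --- it says nothing about the count of $p$ with $q^2$ dividing a single factor. Without Theorem~\ref{theo:HB97} (or an equivalent quantitative result on integral points on conics), the tail of your M\"obius sum is not under control and the argument does not close.
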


\begin{theorem}\label{theo:HB97}\cite[Theorem 3]{HeathBrown97}
Let $T(x_1,x_2,x_3)$ be a non-singular integral ternary quadratic form with coefficients bounded in modulus by a number $n$. Assume that also $T(0,x_2,x_3)$ is non-singular. Let $k$ and $R$ be integers and $\varepsilon > 0$. Then the equation $T(x_1,x_2,x_3) = 0$ has only $O((nR)^\varepsilon)$ primitive integer solutions in the cube $\{(x_1,x_2,x_3): |x_i| \leq R, \ i \in \{1,2,3\} \}$ with $x_1 = k$. Here the constant of the error is allowed to depend on $\varepsilon$.
\end{theorem}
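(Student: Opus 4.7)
The plan is a Hooley-type sieve: combine Möbius inversion with Siegel--Walfisz (Theorem~\ref{th:Siegel-Walfisz}) on small moduli, the Brun--Titchmarsh inequality on intermediate moduli, and the quadric-points bound of Theorem~\ref{theo:HB97} on large moduli. This strategy is available because every irreducible factor of $F$ has degree at most $2$, so each divisibility $d^{2}\mid\Phi_{k}(p)$ can be rewritten as a system of integer points on a conic, which after homogenization becomes a ternary quadratic form to which Theorem~\ref{theo:HB97} applies.

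By Möbius inversion,
$$N(x)=\sum_{\substack{d\ \text{squarefree}\\ (d,6)=1}}\mu(d)\,\pi_{F}(x;d^{2}),\qquad \pi_{F}(x;d^{2}):=\#\{p\leq x:d^{2}\mid F(p)\},$$
and I would split the sum at $\xi_{1}$ and at $\sqrt{x}/(\log x)^{2}$. For $d\leq\xi_{1}$ the modulus $d^{2}\leq\log\log x$ is tiny, so Theorem~\ref{th:Siegel-Walfisz} yields $\pi_{F}(x;d^{2})=(\rho(d)/\varphi(d^{2}))\operatorname{Li}(x)+O(\rho(d)x/(\log x)^{7})$. Since $\rho$ is multiplicative with $\rho(q)=O(1)$ on primes $q>3$, the main contribution converges to $c\operatorname{Li}(x)$ with $c=\prod_{q>3}(1-\rho(q)/(q(q-1)))>0$, positivity following from $\rho(q)\leq\deg F=8<q(q-1)$ for $q\geq 5$. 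The tail $\sum_{d>\xi_{1}}\mu(d)\rho(d)/\varphi(d^{2})$ has size $O(1/\xi_{1})$, producing the announced error $O(x/(\log x\cdot\xi_{1}))$ after multiplication by $\operatorname{Li}(x)$. The aggregated Siegel--Walfisz error $O(\xi_{1}x/(\log x)^{7})$ is absorbed comfortably.

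The intermediate range $\xi_{1}<d\leq\sqrt{x}/(\log x)^{2}$ is bounded by Brun--Titchmarsh: $\pi_{F}(x;d^{2})\ll\rho(d)x/(\varphi(d^{2})\log(x/d^{2}))$. For $d\leq x^{1/4}$ one has $\log(x/d^{2})\gg\log x$, and the resulting sum is again $O(x/(\log x\cdot\xi_{1}))$. For the sub-range $x^{1/4}<d\leq\sqrt{x}/(\log x)^{2}$, the trivial bound $\pi_{F}(x;d^{2})\ll\rho(d)(x/d^{2}+1)$ combined with the divisor-type estimate $\sum_{d\leq D}\rho(d)\ll D(\log D)^{O(1)}$ gives a contribution $O(\sqrt{x}(\log x)^{O(1)})$, negligible against the target error.

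The decisive range is $d>\sqrt{x}/(\log x)^{2}$. The linear factors $\Phi_{1},\Phi_{2}$ contribute at most $O(\sqrt{x}(\log x)^{2})$ primes in total, a crude tally of $p\equiv\pm 1\pmod{d^{2}}$. For each quadratic factor $\Phi_{k}$, $k\in\{3,4,6\}$, I would rewrite $d^{2}\mid\Phi_{k}(p)$ as $\Phi_{k}(p)=md^{2}$ with $1\leq m\ll x^{2}/d^{2}\ll(\log x)^{4}$, and homogenize to a ternary quadratic form $T_{k,m}$, introducing a dummy variable set to $1$: e.g.\ $T_{4,m}(u,p,d)=p^{2}+u^{2}-md^{2}$ for $k=4$, and for $k\in\{3,6\}$ use the identity $4\Phi_{k}(p)=(2p\mp 1)^{2}+3$ to obtain $T_{k,m}(u,v,d)=v^{2}+3u^{2}-4md^{2}$ with $u=1$. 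In each case $T_{k,m}$ and $T_{k,m}(0,\cdot,\cdot)$ are non-singular whenever $m\neq 0$, so Theorem~\ref{theo:HB97} with fixed variable equal to $1$, $n=O(m)$ and $R=x$ gives $O((mx)^{\varepsilon})$ primitive solutions per pair $(k,m)$. Summing over the $O((\log x)^{4})$ admissible $m$ and the three values of $k$ produces an overall $O(x^{\varepsilon})$ contribution. The main obstacle I anticipate is the bookkeeping needed to: (i) ensure that the residues produced by Hensel-lifting roots of $\Phi_{k}\bmod q$ to $\bmod q^{2}$ satisfy the coprimality hypothesis of Theorem~\ref{th:Siegel-Walfisz}, so that the Euler factors in $c$ are the expected ones; (ii) verify the non-singularity of $T_{k,m}$ and $T_{k,m}(0,\cdot,\cdot)$ uniformly in $m$; and (iii) confirm that the several $\varepsilon$-losses at each step aggregate to something dominated by $x/(\log x\cdot\xi_{1})$. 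Once these verifications are in place, the three pieces assemble into the stated asymptotic.
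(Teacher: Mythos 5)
Your proposal does not address the stated theorem at all. The statement you were asked to prove is Theorem~\ref{theo:HB97}, a bound on primitive integer points on a ternary quadric with one coordinate fixed, cited from Heath-Brown's paper \cite{HeathBrown97}. What you have written is instead a (sketch of a) proof of Theorem~\ref{theorem:nt}, the squarefree-values asymptotic for $N(x)$ --- and, tellingly, your argument \emph{invokes} Theorem~\ref{theo:HB97} as a black box in its ``decisive range'' $d>\sqrt{x}/(\log x)^{2}$. An argument that uses a result as an ingredient cannot also be a proof of that result.

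For the record: the paper does not prove Theorem~\ref{theo:HB97} either; it is quoted verbatim from \cite[Theorem 3]{HeathBrown97} and used as an external input in the proof of Theorem~\ref{theorem:nt}. A genuine proof of it belongs to a different circle of ideas (uniform counting of rational points of bounded height on conics, with control depending only on the discriminant and coefficient size), and nothing in your proposal engages with that. If your intention was in fact to reconstruct the proof of Theorem~\ref{theorem:nt}, then the broad architecture (M\"obius inversion, Siegel--Walfisz on small moduli, trivial/Brun--Titchmarsh bounds on intermediate moduli, quadric-point counting via Theorem~\ref{theo:HB97} on large moduli) does match the paper's, modulo a different choice of cutoffs (the paper splits at $\xi_{2}=(\log x)^{3}$ and $\xi_{3}=x^{3/4}$ and does not use Brun--Titchmarsh); but that is not the theorem you were asked about.
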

Here one calls a solution $T(x_1,x_2,x_3) = 0$ primitive, if $\gcd(x_1, x_2, x_3) = 1$.

\begin{lemma}\label{lemma:xi1xi1log} Let $x$ be a real number.
\begin{enumerate}
\item \label{lem:5.4.1} For $x > e$ we have $\xi_1^{\xi_1} \leq \log x$.
%\item \label{lem:5.4.2} If $|x| < \frac{1}{2}$, then $\frac{1}{1+|x|} = 1 + O(|x|)$.
\item \label{lem:5.4.3} $\sum_{m \geq x} \frac{1}{m^2} = O(\frac{1}{x})$ where the sum runs over natural numbers.
\end{enumerate}
\end{lemma}
\begin{proof}
Substituting $y = \sqrt{\log \log x}^{\sqrt{\log \log x}}$ the inequality $\xi_1^{\xi_1} \leq \log x$ becomes $y^y \leq e^{y^2}$ which is true for $y > 0$. Hence the original inequality is true for $x > e$.

%For the second claim observe 
%$$\frac{1}{1+x} = 1 - \frac{x}{1+x} = 1 + O(x) $$
%where in the last equation we used $|1+x| > \frac{1}{2}$ which implies $\frac{x}{1+x} \leq 2|x|$.   

For the second claim we can assume $x$ to be an integer. Then
$$\sum_{m \geq x}^N \frac{1}{m^2} \leq \sum_{m \geq x}^N \frac{1}{m(m-1)} = \sum_{m \geq x}^N \left(\frac{1}{m-1} - \frac{1}{m} \right) = \frac{1}{m-1} - \frac{1}{N} $$
which is bound by $O(\frac{1}{m})$.
\end{proof}

\begin{lemma}\label{lemma:c} Let $a$ and $d$ be integers.
\begin{enumerate}
\item\label{lemma:5.5.1} If $F(a) \equiv 0 \bmod d$, then $\gcd(a,d) =1$.
\item\label{lemma:5.5.2} $\rho(q) \leq 8$.
\item\label{lemma:5.5.3} $$c := \prod_{q > 3} \left(1 - \frac{\rho(q)}{\varphi(q^2)} \right). $$
is a positive real number. 
\item\label{lemma:5.5.4}  $$\prod_{q > \xi_1} \left(1 - \frac{\rho(q)}{\varphi(q^2)} \right) = 1 + O(\xi_1^{-1}).$$
\end{enumerate}
\end{lemma}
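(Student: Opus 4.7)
The plan is to handle each of the four assertions in turn, using that $F(X)=(X-1)(X+1)(X^2+1)(X^2+X+1)(X^2-X+1)$ factors as a product of the five cyclotomic polynomials $\Phi_1,\Phi_2,\Phi_3,\Phi_4,\Phi_6$. The only non-trivial analytic input is the convergence of sums of $1/q^2$ from Lemma~\ref{lemma:xi1xi1log}(\ref{lem:5.4.3}); everything else is bookkeeping.

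For (\ref{lemma:5.5.1}), I would argue by contradiction. Suppose a prime $q$ divides both $a$ and $d$. Then $q^2 \mid d^2 \mid F(a)$, so in particular $q \mid F(a) = (a^2+1)(a^6-1)$. But reducing modulo $q$, from $q\mid a$ we get $a^2+1 \equiv 1$ and $a^6-1\equiv -1$, so $F(a) \equiv -1 \pmod{q}$, a contradiction.

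For (\ref{lemma:5.5.2}), I would first check that for every prime $q>3$ the five cyclotomic factors of $F$ are pairwise coprime modulo $q$ and each of them is separable modulo $q$. The pairwise check is a short finite computation: inspecting the six relevant pairs of factors shows each common root is forced to lie in characteristic $2$ or $3$ (for example, a common root of $\Phi_3$ and $\Phi_4$ would satisfy both $a^2=-a-1$ and $a^2=-1$, forcing $a=0$, impossible). Separability of each $\Phi_i$ for $i\in\{1,2,3,4,6\}$ modulo $q>3$ follows from the discriminants of $\Phi_3,\Phi_4,\Phi_6$ being $\pm 3, \pm 4, \pm 3$, which are invertible modulo $q$. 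Hence every zero of $F$ in $\mathbb{F}_q$ is a simple root, so by Hensel's lemma each such root lifts uniquely to a zero of $F$ modulo $q^2$. Since $F$ has at most $\deg F = 8$ zeros in $\mathbb{F}_q$, we conclude $\rho(q)\leq 8$.

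For (\ref{lemma:5.5.3}) and (\ref{lemma:5.5.4}) I would combine (\ref{lemma:5.5.2}) with $\varphi(q^2)=q(q-1)$ to get the uniform bound
$$\frac{\rho(q)}{\varphi(q^2)} \leq \frac{8}{q(q-1)} = O\!\left(\frac{1}{q^2}\right).$$
For $q\geq 5$ the right hand side is strictly less than $1$, so each factor of the product is a positive real; moreover, the comparison series $\sum_q 1/q^2$ converges, so the infinite product $c$ converges to a strictly positive real number, giving (\ref{lemma:5.5.3}). For (\ref{lemma:5.5.4}) I would take logarithms, use $\log(1-x)=-x+O(x^2)$ which is valid once $q$ is large enough (and certainly once $q>\xi_1$ with $x$ large), and then apply Lemma~\ref{lemma:xi1xi1log}(\ref{lem:5.4.3}) to bound the tail:
$$\sum_{q>\xi_1}\log\!\left(1-\frac{\rho(q)}{\varphi(q^2)}\right) = \sum_{q>\xi_1} O\!\left(\frac{1}{q^2}\right) = O(\xi_1^{-1}).$$
Exponentiating yields $\prod_{q>\xi_1}(1-\rho(q)/\varphi(q^2))=\exp(O(\xi_1^{-1}))=1+O(\xi_1^{-1})$, as required. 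The only mildly delicate point is the case-analysis in (\ref{lemma:5.5.2}) to confirm simplicity of roots at all primes $q>3$; everything else reduces to the $1/q^2$ decay and standard manipulations.
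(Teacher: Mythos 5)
Your proof is correct and follows essentially the same route as the paper: for (\ref{lemma:5.5.1}) reduce modulo a prime dividing $\gcd(a,d)$ and observe $F(a)\equiv -1$; for (\ref{lemma:5.5.2}) show the zeros of $F$ in $\mathbb{F}_q$ are simple when $q>3$ and lift them (the paper phrases the Hensel step as ``$F(a+bq)\not\equiv 0\bmod q^2$ for $1\le b\le q-1$,'' which is the same thing written out); for (\ref{lemma:5.5.3}) and (\ref{lemma:5.5.4}) use $\rho(q)/\varphi(q^2)=O(q^{-2})$, convergence of $\sum q^{-2}$, and Lemma~\ref{lemma:xi1xi1log}. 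Your discriminant computation plus Hensel is a slightly more systematic packaging of what the paper calls an ``easy direct calculation,'' and taking logarithms in (\ref{lemma:5.5.4}) is a cleaner way to write what the paper does with $\exp\{\sum O(q^{-2})\}$. The only omission is the case $q\le 3$ in (\ref{lemma:5.5.2}): your Hensel argument uses $q>3$, so you should at least note, as the paper does, that $\rho(2)$ and $\rho(3)$ can be checked directly (indeed $\rho(2)=2$, $\rho(3)=6$), even though those primes never enter the subsequent estimates.
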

\begin{proof}
If $q$ is a prime dividing $\gcd(a,d)$, then $F(a) \equiv (a^2+1)(a^6-1) \equiv -1 \bmod q$. Hence also $F(a) \not \equiv 0 \bmod d$. 

To prove $\rho(q) \leq 8$ notice first that the polynomial $F(X)$ has at  most $8$ solutions in the field $\mathbb{Z}/q\mathbb{Z}$. If $q \leq 3$ the statement is clear. If $q > 3$ the congruences $a^2+1 \equiv 0 \bmod q$ and $a^6-1 \equiv 0 \bmod q$ can not hold simultaneously and an easy direct calculation shows that $F(a) \equiv 0 \bmod q^2$ implies that $F(a+bq) \not \equiv 0 \bmod q^2$ for any $1\leq b \leq q-1$.

Notice next that $\varphi(q^2) \geq \frac{q^2}{2}$ which directly follows from $\varphi(q^2) = q(q-1)$. So by the previous claim $c$ is positive, if so is $\prod_{q \geq 5} \left(1 - \frac{16}{q^2} \right)$. This product is positive as the sum $\sum_{q = 5}^\infty \frac{16}{q^2}$ converges.

For the last claim we get, using in particular Lemma~\ref{lemma:xi1xi1log}\eqref{lem:5.4.3}
$$\prod_{q > \xi_1} \left(1 - \frac{\rho(q)}{\varphi(q^2)} \right) \leq \prod_{q > \xi_1} \left(1 - \frac{1}{q^2} \right) = 1 + O\left(\sum_{q > \xi_1} \frac{1}{q^2} \right) = 1 + O(\xi_1^{-1}). $$
\end{proof}

\textit{Proof of Theorem~\ref{theorem:nt}:} Assume $x$ is big enough. We will first establish an estimation of the following number which is clearly an upper bound for $N(x)$: 
\begin{equation}\label{eq:nt1}
f(x) = \# \{ p \leq x : \nexists q \in (3,\xi_1], q^2 \mid F(p)\} = \sum_{d \mid Q}\mu(d) \#\{ p \leq x: d^2 \mid F(p) \}
\end{equation}
where $Q = \prod_{3 < q \leq \xi_1} q.$ The equality in the previous formula follows from the inclusion-exclusion principle.
%\begin{equation}\label{eq:nt1}
%\# \{ p \leq x : \nexists q \in (3,\xi_1], q^2 \mid F(p)\} = \sum_{d \mid Q} \mu(d)~ \#\{p\leq x : d^2 \mid F(p)\}.
%\end{equation}
Note here that $d = 1$ contributes $\#\{p \leq x \}$ in the last sum.

We next study the summands on the right hand side of \eqref{eq:nt1} separately. Note that if $p$ is a prime such that $F(p) \equiv 0 \bmod d^2$ and $p \equiv q \bmod d^2$, then also $F(q) \equiv 0 \bmod d^2$. So contributions to $\#  \{p\leq x : d^2 \mid F(p)\}$ can be viewed as expressions of the form $\pi(x;d^2,a)$ where $a$ is an element of $\rho(d)$. There are, by definition, $\rho(d)$ classes in $\mathbb{Z}/d^2\mathbb{Z}$ contributing. Observe that $Q \leq \xi_1^{\xi_1} \leq \log x$ by Lemma~\ref{lemma:xi1xi1log}\eqref{lem:5.4.1}. Hence applying Siegel-Walsfisz, i.e. Theorem~\ref{th:Siegel-Walfisz}, $\rho(d)$ times we have
\begin{equation}\label{eq:nt2}
\#\{p\leq x : d^2 \mid F(p)\} = \frac{\rho(d)}{\varphi(d^2)}\Li(x) + O\left(\rho(d)\frac{x}{(\log x)^7}\right)
\end{equation}
We next study the contributions of the main and the error term in the last expression in \eqref{eq:nt2}. Now $\rho(d) \leq d^2 \leq Q^2 \leq (\log x)^2$ and hence
\begin{equation}\label{eq:nt3}
O\left(\sum_{d \mid Q} \rho(d)\frac{x}{(\log x)^7}\right) = O\left(\sum_{d \mid Q} \frac{x}{(\log x)^5}\right) = O\left(\frac{Qx}{(\log x)^5}\right) = O\left( \frac{x}{(\log x)^4}\right).
\end{equation}
Moreover, using the definition of $\mu$ and the multiplicativity of $\frac{\rho(d)}{\varphi(d^2)}$ we get for the main term
\begin{equation}\label{eq:nt4}
\sum_{d \mid Q} \mu(d) \frac{\rho(d)}{\varphi(d^2)} \Li(x) = \prod_{3 < q \leq \xi_1}\left(1-\frac{\rho(q)}{\varphi(q^2)} \right)\Li(x)
\end{equation}
So using \eqref{eq:nt2} - \eqref{eq:nt4} we have 
\begin{equation}\label{eq:nt5}
\# \{p \leq x : \nexists q \in (3,\xi_1], q^2 \mid F(p)\} = \prod_{3 < q \leq \xi_1}\left(1-\frac{\rho(q)}{\varphi(q^2)} \right)\Li(x) + O\left( \frac{x}{(\log x)^4}\right) 
\end{equation}
By Lemma~\ref{lemma:c}\eqref{lemma:5.5.4} there is a constant $M$ such that
\begin{equation*}
 \prod_{3 < q \leq \xi_1}\left(1-\frac{\rho(q)}{\varphi(q^2)} \right)\Li(x) = \frac{c}{\prod_{q > \xi_1}\left(1-\frac{\rho(q)}{\varphi(q^2)}\right)} \leq \frac{c}{1+M\xi_1^{-1}}\Li(x).
\end{equation*}
As $\Li(x) = O\left(\frac{x}{\log x}\right)$ follows from the definition of $\Li(x)$ this gives using the definition of $\xi_1$ and the inequality $\frac{1}{1+M\xi_1^{-1}} \leq 1+ M\xi_1^{-1}$ that 
\begin{equation*}
\prod_{3 < q \leq \xi_1}\left(1-\frac{\rho(q)}{\varphi(q^2)} \right)\Li(x) = c\Li(x) + O\left(\frac{x}{\log x (\log \log x)^{\frac{1}{2}}}  \right).
\end{equation*}

Substituting this in \eqref{eq:nt5} we hence have 
\begin{equation}\label{eq:ntupperbound}
\# \{p \leq x : \nexists q \in (3,\xi_1], q^2 \mid F(p)\} = c\Li(x) + O\left(\frac{x}{\log x (\log \log x)^{\frac{1}{2}}}\right)
\end{equation}
which is the desired estimation for $f(x)$. To finish the proof it is enough to show that the number of primes $p \leq x$ for which $F(p)$ is divisible by $q^2$ for $q > \xi_1$ is $O\left(\frac{x}{\log x (\log \log x)^{\frac{1}{2}}} \right)$.

So we now also consider primes $p$ such that $F(p)$ is divisible by $q^2$ for $q > \xi_1$. We will study three intervals of primes separately. Set $\xi_2 = (\log x)^3$, $\xi_3 = x^{\frac{3}{4}}$ and $\xi_4 = 2x$. Note that as $q >3$, if $q^2 \mid F(p)$, then $q^2$ divides at most one of the cyclotomic polynomials $\Phi_i(p)$ which multiply to $F(p)$. Hence $q^2 \leq \Phi_3(p) = p^2 + p + 1$ and so $q \leq 2x$.

We first estimate the contribution of primes in the interval $\xi_1 < q \leq \xi_2$. Then $q^2 \leq (\log x)^6$ and so using Lemma~\ref{lemma:c}\eqref{lemma:5.5.1} and Lemma~\ref{lemma:c}\eqref{lemma:5.5.2} we can apply the Siegel-Walfisz Theorem as before in \eqref{eq:nt2}. We will also again use $\Li(x) = O(\frac{x}{\log x})$ to obtain
\begin{eqnarray*}
\# \{p \leq x: q^2 \mid F(p)\} = \frac{8}{\varphi(q^2)}\Li(x) + O\left(x(\log x)^{-7}\right) = O\left(\frac{q^{-2}x}{\log x}\right).
\end{eqnarray*}
This gives using Lemma~\ref{lemma:xi1xi1log}\eqref{lem:5.4.3} and the definition of $\xi_1$
\begin{equation}\label{eq:nt6}
\sum_{\xi_1 < q \leq \xi_2} \# \{p \leq x: q^2 \mid F(p)\} = \sum_{\xi_1 < q \leq \xi_2} O\left(\frac{q^{-2}x}{\log x}\right) = O\left(\frac{\xi_1^{-1} x}{\log x}\right) = O\left(\frac{x}{\log x (\log \log x)^{\frac{1}{2}}} \right) 
\end{equation}
which is exactly the error term in \eqref{eq:ntupperbound}.

The next interval we consider is $\xi_2 < q \leq \xi_3$. For this, note first that
$$\pi(x;m,a) \leq \# \{n \leq x: n \equiv a \bmod m \} \leq \frac{x}{m} + 1. $$
Hence by Lemma~\ref{lemma:c}\eqref{lemma:5.5.2} and Lemma~\ref{lemma:xi1xi1log}\eqref{lem:5.4.3} there is a constant $M$ such that
\begin{align}\label{eq:nt7}
\sum_{\xi_2 < q \leq \xi_3} \# \{p \leq x: q^2 \mid F(p) \} &\leq \sum_{\xi_2 < q \leq \xi_3} \rho(q)\left(\frac{x}{q^2} + 1\right) \leq \sum_{\xi_2 < q \leq \xi_3} 8\left(\frac{x}{q^2} + 1 \right) \\
 &\leq 8\left(\frac{Mx}{\xi_2} + \xi_3 \right) =  O\left(\frac{x}{(\log x)^3} \right). \nonumber
\end{align}
This is a smaller contribution than the error term in \eqref{eq:ntupperbound}.

We next count pairs of primes $(p,q)$ such that $q^2 \mid F(p)$, $q > \xi_3 = x^\frac{3}{4}$ and $p \leq x$. It follows that $q^2$ does not divide $\Phi_1(p) = p-1$ or $\Phi_2(p) = p+1$. Hence it divides $\Phi_3(p) = p^2+p+1$, $\Phi_4(p) = p^2+1$ or $\Phi_6(p)=p^2-p+1$. Moreover $\Phi_i(p) = q^2n$, for $i \in \{3,4,6 \}$, implies
\begin{equation}\label{eq:nt8}
n \leq \frac{p^2+p+1}{q^2} \leq \frac{x^2+x+1}{\xi_3^2} \leq \frac{2x^2}{x^\frac{3}{2}} = 2x^\frac{1}{2}.  
\end{equation}
We will now apply Theorem~\ref{theo:HB97} with $\varepsilon = \frac{1}{4}$. If $p^2 +1 = nq^2$, then $(1,p,q)$ is a solution of the ternary quadratic form $T_n(x_1, x_2, x_3) = x_1^2 + x_2^2 - nx_3^2$. The coefficients of $T_n$ are bounded in modulus by $n$. So by Theorem~\ref{theo:HB97} the number of solutions of $T_n = 0$ of shape $(1,p,q)$ with $p\leq x$ and $q \leq \xi_4 = 2x$ is bounded by $O((n \cdot 2x)^\frac{1}{4}) = O(x^\frac{3}{8})$. Here we used \eqref{eq:nt8} in the last step. Also by \eqref{eq:nt8} we only have to consider $n \leq 2x^\frac{1}{2}$, so summing over all such $n$ we obtain a contribution of $O(x^\frac{7}{8})$ for pairs of primes $(p,q)$ such that $p^2+1 = q$ and $q > \xi_3$. This error term is smaller than the one in \eqref{eq:ntupperbound}. 

The cases $p^2 - p + 1 = q^2n$ and $p^2+p+1 = q^2n$ can be dealt with in the same way using the forms $T_n(x_1,x_2,x_3) = x_1^2+x_2^2-x_2 - nx_3^2$ and $T_n(x_1,x_2,x_3) = x_1^2 + x_2^2 + x_2 -nx_3^2$, respectively. Overall we conclude that
$$N(x) = c\Li(x) + O\left(\frac{x}{\log x (\log \log x)^\frac{1}{2}}\right).$$
$  \hfill \qed $

\section{Applications}\label{Section_Applications}
We give some applications of our results to the study of the Prime Graph Question for almost simple groups. We note that Theorem~\ref{main_theorem} can be directly applied to obtain the Prime Graph Question for almost simple groups with alternating socle, thus reproducing a result from \cite{BachleMargolisSymmetric}, except in case the socle is $A_6$.

%Theorem~\ref{main_theorem} can bes used to obtain the Prime Graph Question for most sporadic simple groups and their automorphism groups.
\subsection{Sporadic groups}

We first show how Theorem~\ref{main_theorem} can be used to handle all except two sporadic simple socles, namely the O'Nan and Monster group. We note that for two of the groups for which we prove (PQ) an additional argument using the HeLP-method is required, namely for the Thompson and Held groups.

\begin{proof}[Proof of Corollary~\ref{sporadic}]
Checking the orders of sporadic simple groups and their automorphism groups and the orders of elements in these groups, using e.g. the ATLAS \cite{Atlas} or the Character Table Library \cite{CTblLib} from GAP \cite{GAP}, one directly gets a positive answer for the Prime Graph Question by Theorem~\ref{main_theorem} in case $S$ is: one of the five Mathieu simple groups, the Higman-Sims simple group, one of the four Janko simple groups, one of the three Conway simple groups, the McLaughlin group, the Suzuki group, the Harada-Norton group, one of the three Fischer simple groups, the Rudvalis group or the Lyons simple groups. The Prime Graph Question also follows for the Tits group which is sometimes included in the list of sporadic simple groups. 

\setlength{\parindent}{12pt}

It hence remains to handle the sporadic simple Thompson and Held groups.
For the Thompson group it only remains to prove that the unit group of the integral group ring does not contain elements of order $35$. We will apply a standard argument using character theory to do so, known as the HeLP-method. So let $G$ be the sporadic simple Thompson group and assume $u \in \V(\ZZ G)$ is of order $35$. There are only two non-trivial conjugacy classes of elements of order dividing $35$ in $G$ of order $5$ and $7$ which we call $5a$ and $7a$. If $x \in G$ is not an element in $5a$ or $7a$ then $\varepsilon_x(u) = 0$. We hence have $\varepsilon_{5a}(u) + \varepsilon_{7a}(u) = 1$. We will use a character $\chi$ of an irreducible $248$-dimensional complex representation of $G$ which has value $-2$ on $5a$ and $3$ on $7a$. Note that as $G$ contains exactly one conjugacy class of elements of order $5$ and $7$ we have $\chi(u^7) = \chi(5a)$ and $\chi(u^5) = \chi(7a)$ by Lemma~\ref{lemma_partaugs}. 

We compute two multiplicities of eigenvalues of $u$ under a representation realizing $\chi$ using Lemma~\ref{LuPa}. We will also use $\chi(u) = \varepsilon_{5a}(u)\chi(5a) + \varepsilon_{7a}(u)\chi(7a) = -2\varepsilon_{5}(u) + 3\varepsilon_{7a}(u)$ and $\varepsilon_{7a}(u) = 1 - \varepsilon_{5a}(u)$, which follow from Lemma~\ref{lemma_partaugs} and the fact that $u$ has augmentation $1$.
\begin{align*}
\mu(1, u, \chi) &= \frac{1}{35}\left(\chi(1) + \Tr_{\QQ(\zeta_5)/\QQ}(\chi(u^7)) + \Tr_{\QQ(\zeta_7)/\QQ}(\chi(u^5)) + \Tr_{\QQ(\zeta_{35})/\QQ}(\chi(u))   \right) \\ 
&= \frac{1}{35} \left( 248 - 8 + 18  + 24 (-2\varepsilon_{5a}(u) + 3\varepsilon_{7a}(u)) \right) = \frac{1}{35} (330 - 120\varepsilon_{5a}(u)).
\end{align*}
As this is a non-negative integer we obtain that so is $(330-120\varepsilon_{5a}(u))$ implying $\varepsilon_{5a}(u) \leq 2$.
Next we obtain
\begin{align*}
\mu(\zeta_5, u, \chi) &= \frac{1}{35}\left(\chi(1) + \Tr_{\QQ(\zeta_5)/\QQ}(\chi(u^7)\zeta_5^{-7}) + \Tr_{\QQ(\zeta_7)/\QQ}(\chi(u^5)) + \Tr_{\QQ(\zeta_{35})/\QQ}(\chi(u)\zeta_5^{-7})   \right) \\ 
&= \frac{1}{35} \left( 248 + 2 + 18  - 6 (-2\varepsilon_{5a}(u) + 3\varepsilon_{7a}(u)) \right) = \frac{1}{35} (250 + 30\varepsilon_{5a}(u)).
\end{align*}
So $(250 + 30\varepsilon_{5a}(u))$ is a non-negative integer, implying $\varepsilon_{5a}(u) \geq -8$. But the bounds we obtained for $\varepsilon_{5a}(u)$ are not compatible with the conditions $\varepsilon_{5a}(u) \equiv 0 \mod 5$ and $\varepsilon_{5a}(u) \equiv 1 \mod 7$ which we obtain from Lemma~\ref{congrueneces_pclasses}.

For the sporadic Held group it remains to show that the normalized unit group of the group ring of the automorphism group does not contain elements of order $35$. Using the HeLP-method this has been done in \cite{KimmerleKonovalov15} and it can also be checked using the GAP-package \cite{BachleMargolisHeLP}.
\end{proof}

\begin{remark}
To answer the Prime Graph Question for the Monster group we are left after the application of Theorem~\ref{main_theorem} with the following orders of normalized torsion units we need to exclude: $5 \cdot 13$, $7 \cdot 11$, $7 \cdot 13$ and $11 \cdot 13$.

For the O'Nan simple group and its automorphism group it would remain to exclude the existence of normalized torsion units of order $3 \cdot 7$. In \cite{BovdiKonovalovONan} it is claimed that this can be achieved using the HeLP-method, but we were unable to confirm this. The inequalities the authors derive in this article for units of order $21$ are:
\begin{itemize}
\item $\frac{1}{21}(98493 + 312\varepsilon_3) \in \mathbb{Z}_{\geq 0},$
\item $\frac{1}{21}(98415 + 26\varepsilon_3) \in \mathbb{Z}_{\geq 0},$
\item $\frac{1}{21}(98415 - 156\varepsilon_3) \in \mathbb{Z}_{\geq 0},$
\end{itemize}
where $\varepsilon_3$ denotes the partial augmentation of a potential unit $u$ of order $21$ at the conjugacy class of elements of order $3$ in the O'Nan simple group. Denote by $\varepsilon_7$ the sum of the partial augmentations of $u$ at elements of order $7$. These inequalities are correct, but clearly have non-trivial integral solutions, such as e.g. $(\varepsilon_3, \varepsilon_7) = (-6, 7)$, which also satisfies other known restrictions on partial augmentations of torsion units.

\setlength{\parindent}{12pt}

It seems worth pointing out that the positive results Bovdi, Konovalov et al. achieved for the sporadic simple groups in regard to the Prime Graph Question using the HeLP-method have been confirmed by Verbeken \cite{Brecht} using an implementation of the HeLP-method in a publicly available GAP-package \cite{BachleMargolisHeLP}. 
\end{remark}

\setlength{\parindent}{12pt}

\subsection{Simple groups of Lie type}

We proceed to give some applications to almost simple groups of Lie type. To simplify notation we introduce the function $\alpha$ mapping a positive integer to its biggest divisor coprime to $6$, i.e.
$$\alpha: \mathbb{Z}_{>0} \rightarrow \mathbb{Z}_{>0}, \ \ \alpha(n) = \max\{d \in \mathbb{Z} \ : \ \gcd(d,6)=1,\ d \mid n \}.$$
%An integer is called \emph{squarefree}, if  it is not divisible by the square of any prime. 
Before we will use the number theoretical results from Section~\ref{sec_numbertheory} we study the series' of groups for which they will be applied separately.

\begin{lemma}\label{lemma:PSL4}
Let $G = \PSL(4,q)$ with $q=p^f$ and let $\alpha(f) = c$. Assume $c$ is squarefree and coprime to $(q^2+q+1)(q^2+1)$. If $\alpha((q^2+q+1)(q^2+1))$ is also squarefree, then the Prime Graph Question has a positive answer for any almost simple group with socle $G$. 
\end{lemma}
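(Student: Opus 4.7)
The plan is to combine Theorem~\ref{main_theorem} with the Kimmerle-Konovalov reduction so that the statement is pulled down to a verification of element orders in $\PSL(4,q)$ and its automorphism extensions.

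First, by the reduction theorem \cite{KimmerleKonovalov}, it suffices to settle the Prime Graph Question for every almost simple group $\tilde G$ with socle $G = \PSL(4,q)$. Fix such a $\tilde G$. Since the vertex sets of the prime graphs of $\tilde G$ and $\V(\ZZ \tilde G)$ already coincide by Higman and Cohn-Livingstone, only the edges need to be controlled. For a pair of primes $(\ell_1,\ell_2)$ that is not an edge in the prime graph of $\tilde G$, Theorem~\ref{main_theorem} already settles the non-edge at the level of $\V(\ZZ \tilde G)$ as soon as at least one of $\ell_1, \ell_2$ has a Sylow subgroup of order equal to itself in $\tilde G$. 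The problem is therefore reduced to showing that any two primes $\ell_1, \ell_2$ whose Sylow subgroups in $\tilde G$ are both of order strictly larger than the prime are in fact joined in the prime graph of $\tilde G$.

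Next, I locate these delicate primes from the factorisation
$$|G| = \frac{q^{6}\,\Phi_1(q)^{3}\,\Phi_2(q)^{2}\,\Phi_3(q)\,\Phi_4(q)}{\gcd(4,q-1)}$$
together with the known structure of $\Out(G)$. The characteristic $p$, every prime dividing $q-1$ (via $\Phi_1(q)^3$), every prime dividing $q+1$ (via $\Phi_2(q)^2$), and the primes $2$ and $3$ all sit inside a Sylow subgroup of $\tilde G$ of order strictly greater than the prime. By contrast, the assumption that $\alpha((q^2+q+1)(q^2+1))$ is squarefree forces every prime $\ell > 3$ dividing $\Phi_3(q)\Phi_4(q)$ to divide $|G|$ exactly once, and the assumption that $c = \alpha(f)$ is squarefree and coprime to $(q^2+q+1)(q^2+1)$ ensures that no such $\ell$ acquires additional factors from the diagonal, field or graph automorphisms of $G$; the same assumption also handles primes $\ell > 3$ dividing $f$ but not $|G|$. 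Consequently the set of delicate primes is contained in
$$\Pi \ :=\ \{p,\,2,\,3\} \,\cup\, \{\,\ell\ \text{prime}\,:\ \ell \mid q^2 - 1\,\}.$$

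Finally, I verify that every pair of primes in $\Pi$ is realised as an edge in the prime graph of $\tilde G$ by writing down explicit elements of the required order. Using the maximal tori of $\PSL(4,q)$ coming from the Weyl group conjugacy classes of $S_4$: the cyclic image in $\PSL(4,q)$ of the torus of $\GL(4,q)$ of order $q^4-1$ joins every pair of primes in $\Phi_2(q)\Phi_4(q)$; the split torus of shape $(q-1)^{3}$ joins every pair of primes inside $\Phi_1(q)$; and a torus of shape $(q^2-1)(q-1)$ joins primes of $q-1$ with primes of $q+1$. Pairs involving $p$ are realised inside a standard parabolic subgroup by combining a unipotent Jordan block with a diagonal semisimple element, yielding elements of order $p\ell$ for every prime $\ell \mid q^2 - 1$. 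The remaining small pairs $(2,3)$, $(2,p)$, $(3,p)$ are supplied by analogous mixed-order elements of $\PSL(4,q)$. The main obstacle I anticipate is the bookkeeping at the primes $2$ and $3$: these can accumulate in several cyclotomic factors simultaneously, and I must confirm that the constructed elements neither collapse in the quotient $\SL(4,q) \twoheadrightarrow \PSL(4,q)$ nor fail to survive when extending to $\tilde G$; a handful of exceptionally small $q$ may require direct verification via the GAP character table library \cite{CTblLib}.
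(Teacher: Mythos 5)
Your high-level strategy coincides with the paper's: use the squarefree hypotheses to force Sylow subgroups to have prime order for all primes outside a controlled set, invoke Theorem~\ref{main_theorem} for those, and then verify directly that the remaining pairs of primes are already joined in the group itself. However, the crucial verification step is carried out differently, and yours is not complete. The paper invokes a single structural fact: $\PSL(4,q)$ contains a subgroup mapping onto $\operatorname{PSL}(2,q)\times\operatorname{PSL}(2,q)$, and since $\operatorname{PSL}(2,q)$ has elements of every prime order dividing $q(q-1)(q+1)$, this one subgroup immediately produces elements of order $rs$ in $H$ for \emph{every} pair of primes $r,s$ dividing $q(q^2-1)$, uniformly in $q$ and surviving in all overgroups $H$ with socle $G$. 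Your substitute — constructing elements in maximal tori and parabolics — is both overshooting and underdelivering: the Coxeter torus you invoke covers $\Phi_4(q)$ primes that are not in your set $\Pi$ (and so do not need to be joined to anything by hand, since Theorem~\ref{main_theorem} already handles them), while after restriction to $\SL(4,q)$ and the quotient by the centre the torus order is $\Phi_2(q)\Phi_4(q)/\gcd(4,q-1)$, not $\Phi_2(q)\Phi_4(q)$, so even the claim as stated is not literally correct for $2$ and $3$. You explicitly flag the unresolved bookkeeping around the $\SL\twoheadrightarrow\PSL$ quotient, the extension to $\tilde G$, and small $q$ — but these are precisely the points that must be settled, and they are all absorbed at once by the subgroup argument. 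As a cosmetic remark, the opening appeal to the Kimmerle--Konovalov reduction is vacuous here: the lemma is already phrased for almost simple groups with socle $G$, so there is nothing to reduce.
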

\begin{proof}
We will use facts about $G$ given in \cite[Section 3.3]{Wilson}. Let $H$ be an almost simple group with socle $G$ and let $r$ and $s$ be primes dividing the order of $H$. We have
$$|G| = \frac{1}{\gcd(4,q-1)} q^6 (q^2-1)(q^3-1)(q^4-1) = \frac{1}{\gcd(4,q-1)} q^6 (q-1)^3(q+1)^2(q^2+q+1)(q^2+1)$$
and $\alpha(|\Out(G)|) = c$. Moreover $G$ has a subgroup mapping onto $\operatorname{PSL}(2,q)\times \operatorname{PSL}(2,q)$. Hence, if $r$ and $s$ divide $q(q-1)(q+1)$, then $H$ contains an element of order $rs$. So if $H$ does not contain an element of order $rs$, then $r$ or $s$ is a divisor of $\alpha((q^2+q+1)(q^2+1)c)$. This number is squarefree by assumption and so the Prime Graph Question has a positive answer for $H$ by Theorem~\ref{main_theorem}.
\end{proof}

\begin{lemma}\label{lemma:PSU4}
Let $G = \operatorname{PSU}(4,q)$ with $q=p^f$ and let $\alpha(f) = c$. Assume $c$ is squarefree and coprime to $(q^2+1)(q^2-q+1)$. If $\alpha((q^2+1)(q^2-q+1))$ is also squarefree, then the Prime Graph Question has a positive answer for any almost simple group with socle $G$. 
\end{lemma}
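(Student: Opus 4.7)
The proof is a direct transcription of Lemma~\ref{lemma:PSL4} to the unitary setting. First I would record that
\[
|G|=\tfrac{1}{\gcd(4,q+1)}q^6(q^2-1)(q^3+1)(q^4-1)=\tfrac{1}{\gcd(4,q+1)}q^6(q-1)^2(q+1)^3(q^2-q+1)(q^2+1),
\]
and that $|\Out(G)|=2f\cdot\gcd(4,q+1)$, so that $\alpha(|\Out(G)|)=\alpha(f)=c$. The hypotheses then guarantee that $\alpha((q^2+1)(q^2-q+1)c)$ is a squarefree integer, and that any prime divisor of this number is coprime to $q(q-1)(q+1)$.

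Next, the key structural input is that $G$ contains a subgroup mapping onto $\PSL(2,q)\times\PSL(2,q)$. This comes from the stabilizer in $\operatorname{SU}(4,q)$ of a decomposition of the ambient hermitian $4$-space as an orthogonal sum of two non-degenerate hermitian $2$-spaces: this stabilizer contains $\operatorname{SU}(2,q)\times\operatorname{SU}(2,q)\cong\operatorname{SL}(2,q)\times\operatorname{SL}(2,q)$, whose image modulo the center of $\operatorname{SU}(4,q)$ maps surjectively onto $\PSL(2,q)\times\PSL(2,q)$. Consequently, for any primes $r,s$ dividing $q(q-1)(q+1)$, any almost simple group $H$ with socle $G$ contains an element of order $rs$.

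Now let $H$ be such an almost simple group and let $r,s$ be primes dividing $|H|$. If both $r$ and $s$ divide $q(q-1)(q+1)$, then by the previous paragraph $H$ itself already contains an element of order $rs$. Otherwise, at least one of them, say $r$, must divide $\alpha((q^2+1)(q^2-q+1)c)$, which is squarefree; hence a Sylow $r$-subgroup of $H$ has order exactly $r$, and Theorem~\ref{main_theorem} forces any element of order $rs$ in $\V(\ZZ H)$ to already exist in $H$. Either way the Prime Graph Question holds for $H$.

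The only non-routine ingredient is the verification of the $\PSL(2,q)\times\PSL(2,q)$-quotient subgroup; this reduces to a brief computation with determinants and central scalars inside $\operatorname{SU}(4,q)$, entirely analogous to the one implicit in the linear case treated in Lemma~\ref{lemma:PSL4}.
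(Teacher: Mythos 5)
Your proof matches the paper's: the paper computes the same order formula and $\alpha(|\Out(G)|)=c$, and then says the argument "follows in the same way as in Lemma~\ref{lemma:PSL4} using the subgroup $\operatorname{PSU}(2,q)\times\operatorname{PSU}(2,q)$" — which is exactly your $\PSL(2,q)\times\PSL(2,q)$ quotient subgroup, since $\operatorname{PSU}(2,q)\cong\PSL(2,q)$. One small inaccuracy: your intermediate claim that \emph{every} prime divisor of $\alpha((q^2+1)(q^2-q+1)c)$ is coprime to $q(q-1)(q+1)$ is not quite right (a prime dividing $c$, i.e.\ $f$, could also divide $q\pm 1$), but it is also not what the argument actually needs; the correct step, which you do carry out, is that if a prime $r>3$ does \emph{not} divide $q(q-1)(q+1)$ then it divides $\alpha((q^2+1)(q^2-q+1)c)$ and, using that $c$ and $\alpha((q^2+1)(q^2-q+1))$ are coprime and each squarefree, the $r$-part of $|H|$ is exactly $r$, so Theorem~\ref{main_theorem} applies.
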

\begin{proof}
The necessary facts about $G$ are contained in \cite[Section 3.6]{Wilson}. We have
$$|G| = \frac{1}{\gcd(4,q+1)} q^6 (q^2-1)(q^3+1)(q^4-1) = \frac{1}{\gcd(4,q+1)} q^6 (q-1)^2(q+1)^3(q^2+1)(q^2-q+1)$$
and $\alpha(|\Out(G)|) = c$. The proof now follows in the same way as in Lemma~\ref{lemma:PSL4} using the subgroup $\operatorname{PSU}(2,q) \times \operatorname{PSU}(2,q)$.
\end{proof}

We next handle groups having symplectic socle. Recall that
$$|\operatorname{PSp}(2n,q)| =  \frac{1}{\gcd(2,q-1)}q^{n^2}(q^2-1)(q^4-1)...(q^{2n}-1). $$

\begin{lemma}\label{lemma:PSp4}
Let $G = \operatorname{PSp}(4,q)$ with $q=p^f$ and let $\alpha(f) = c$. Assume $c$ is squarefree and coprime to $(q^2+1)$. If $\alpha(q^2+1)$ is also squarefree, then the Prime Graph Question has a positive answer for any almost simple group with socle $G$. 
\end{lemma}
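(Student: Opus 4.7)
The plan is to mimic the proofs of Lemmas~\ref{lemma:PSL4} and \ref{lemma:PSU4} line by line, substituting the arithmetic data of $\operatorname{PSp}(4,q)$. First I would record the order formula
$$|G| = \frac{1}{\gcd(2,q-1)} q^4 (q^2-1)(q^4-1) = \frac{1}{\gcd(2,q-1)} q^4 (q-1)^2(q+1)^2(q^2+1),$$
and note that $\alpha(|\operatorname{Out}(G)|) = c$: the outer automorphism group is built from a diagonal automorphism of order $\gcd(2,q-1)$, the field automorphisms of order $f$, and (for $q$ even) an exceptional graph automorphism of order $2$, so only $2$-power factors appear beyond $f$.

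The key structural input I would use is a subgroup of $G$ that maps onto $\operatorname{PSL}(2,q)\times\operatorname{PSL}(2,q)$. This is supplied by the embedding $\operatorname{Sp}(2,q)\times\operatorname{Sp}(2,q)\hookrightarrow\operatorname{Sp}(4,q)$ coming from an orthogonal splitting of the natural $4$-dimensional symplectic space; after factoring out the centre of $\operatorname{Sp}(4,q)$ one obtains the required subgroup of $G$, which has $\operatorname{PSL}(2,q)\times\operatorname{PSL}(2,q)$ as a quotient by a central subgroup of order at most $2$.

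With these preparations, the standard dichotomy applies. Let $H$ be an almost simple group with socle $G$ and let $r\neq s$ be primes dividing $|H|$. If both $r$ and $s$ divide $q(q-1)(q+1)$, then a product of elements of orders $r$ and $s$ sitting in different factors of $\operatorname{PSL}(2,q)\times\operatorname{PSL}(2,q)$ supplies an element of order $rs$; a routine lift through the central kernel of order at most $2$, followed if necessary by taking a suitable power, yields an element of order $rs$ in $H$. Otherwise, without loss of generality, $r\nmid q(q-1)(q+1)$; since $2$ and $3$ always divide $q(q^2-1)$, we automatically have $r\geq 5$, so $r$ must divide $\alpha((q^2+1)c)$. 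The hypotheses that $\alpha(q^2+1)$ and $c$ are squarefree and mutually coprime, together with the arithmetic observation $\gcd(q^2+1,q(q^2-1))\mid 2$, force $r$ to appear with exponent exactly $1$ in $|H|$; hence the Sylow $r$-subgroup of $H$ has order $r$, and Theorem~\ref{main_theorem} applied to $r$ concludes the argument.

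The only points I expect to need care about are the precise description of $|\operatorname{Out}(G)|$ in characteristic $2$ (so that $\alpha(|\operatorname{Out}(G)|)=c$ really holds) and the verification that pulling an element of order $rs$ back through a central $\mathbb{Z}/2$-extension still yields such an element. Neither should be a serious obstacle, but both deserve to be stated precisely before invoking Theorem~\ref{main_theorem}.
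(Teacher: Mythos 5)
Your proof follows exactly the same route as the paper: same order factorization, same observation that $\alpha(|\operatorname{Out}(G)|)=c$, and the same use of a subgroup mapping onto $\operatorname{PSp}(2,q)\times\operatorname{PSp}(2,q)$ (which you call $\operatorname{PSL}(2,q)\times\operatorname{PSL}(2,q)$, the same group) to dispose of the case where both primes divide $q(q^2-1)$, after which Theorem~\ref{main_theorem} applies because the remaining prime divides the squarefree number $\alpha((q^2+1)c)$. The extra care you flag about the characteristic-$2$ graph automorphism and the central extension is sound but not needed beyond what the cited reference supplies.
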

\begin{proof}
We will use facts about $G$ given in \cite[Section 3.5]{Wilson}. 
%Let $H$ be an almost simple group with socle $G$ and let $r$ and $s$ be primes dividing the order of $H$. 
We have
$$|G| = \frac{1}{\gcd(2,q-1)} q^4 (q-1)^2(q+1)^2(q^2+1) $$
and $\alpha(|\Out(G)|) = c$. Moreover $G$ has a subgroup mapping onto $\operatorname{PSp}(2,q)\times \operatorname{PSp}(2,q)$. We hence can argue as in the proof of Lemma~\ref{lemma:PSL4}.
% Hence, if $r$ and $s$ divide $q(q-1)(q+1)$, then $H$ contains an element of order $rs$. So if $H$ does not contain an element of order $rs$, then $r$ or $s$ is a divisor of $\alpha((q^2+1)c)$. This number is squarefree by assumption and so the Prime Graph Question has a positive answer for $H$ by Theorem~\ref{main_theorem}.
\end{proof}

\begin{lemma}\label{lemma:PSp6}
Let $G = \operatorname{PSp}(6,q)$ with $q=p^f$ and let $\alpha(f) = c$. Assume $c$ is squarefree and coprime to $(q^2+q+1)(q^2-	q+1)$. If $\alpha((q^2+q+1)(q^2-q+1))$ is also squarefree, then the Prime Graph Question has a positive answer for any almost simple group with socle $G$. 
\end{lemma}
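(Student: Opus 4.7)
The plan is to mimic Lemmas~\ref{lemma:PSL4}, \ref{lemma:PSU4} and \ref{lemma:PSp4}, replacing the smaller groups by $G=\operatorname{PSp}(6,q)$ and using standard information from \cite[Section 3.5]{Wilson}. First I would record
\[ |G|=\frac{1}{\gcd(2,q-1)}\,q^9(q-1)^3(q+1)^3(q^2+1)(q^2+q+1)(q^2-q+1), \]
and $|\Out(G)|=f\cdot\gcd(2,q-1)$, so that $\alpha(|\Out(G)|)=c$.

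Next I would invoke the subgroup of $G$ arising from the stabilizer in $\operatorname{Sp}(6,q)$ of an orthogonal decomposition of the natural module as a non-degenerate $4$-dimensional plus $2$-dimensional symplectic sum; this subgroup has a quotient of $\operatorname{PSp}(4,q)\times\operatorname{PSp}(2,q)$ with the two factors centralising each other. Its order is divisible by every prime in $q(q-1)(q+1)(q^2+1)$, so any two such primes are realised as orders of commuting elements in $H$ and hence are adjacent in the prime graph of $H$. Consequently, if $r$ and $s$ are primes dividing $|H|$ and not adjacent in the prime graph of $H$, at least one of them, say $r$, must divide $\alpha((q^2+q+1)(q^2-q+1)c)$. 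By the hypotheses this integer is squarefree, so a Sylow $r$-subgroup of $H$ has order $r$, and Theorem~\ref{main_theorem} then supplies an edge between $r$ and $s$ in the prime graph of $\V(\ZZ H)$ if and only if there is one in $H$, yielding the conclusion.

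The only point I would need to treat carefully---and which I expect to be the sole genuine obstacle---is the claim that a prime $r\geq 5$ dividing $\alpha((q^2+q+1)(q^2-q+1)c)$ really appears to the first power in $|H|$. This reduces to elementary gcd identities such as $\gcd(q^2\pm q+1,\,q(q^2-1)(q^2+1))\in\{1,3\}$ combined with the hypothesis $\gcd(c,(q^2+q+1)(q^2-q+1))=1$ and the squarefreeness of $c$, and is the direct analogue of the verification implicit in the three previous lemmas.
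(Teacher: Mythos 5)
Your proof matches the paper's approach, which is itself just a one-line reference back to Lemma~\ref{lemma:PSp4} citing a subgroup mapping onto $\operatorname{PSp}(2,q)\times\operatorname{PSp}(4,q)$ together with the fact that $\operatorname{PSp}(4,q)$ contains an element of order $\alpha(q^2+1)$. One small caution on phrasing: the clause ``its order is divisible by every prime in $q(q-1)(q+1)(q^2+1)$, so any two such primes are realised as orders of commuting elements'' is not an inference from divisibility alone, but rests on the direct-factor structure you already invoked (namely that $\operatorname{PSp}(4,q)$ has a cyclic torus of order divisible by $\alpha(q^2+1)$ and that primes in $q(q-1)(q+1)$ occur as element orders of $\operatorname{PSp}(2,q)$, a copy of which sits inside each direct factor); with that made explicit, the argument is exactly the paper's.
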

\begin{proof}
We can argue as in the proof of Lemma~\ref{lemma:PSp4} using the fact that $G$ contains a subgroup mapping onto $\operatorname{PSp}(2,q) \times \operatorname{PSp}(4,q)$ and that $\operatorname{PSp}(4,q)$ contains an element of order $\alpha(q^2+1)$.
\end{proof}

We next deal with orthogonal groups.

\begin{lemma}\label{lemma:O7}
Let $G = \operatorname{P\Omega}(7,q)$ with $q=p^f$ and let $\alpha(f) = c$. Assume $c$ is squarefree and coprime to $(q^2+q+1)(q^2-q+1)$. If $\alpha((q^2+q+1)(q^2-q+1))$ is also squarefree, then the Prime Graph Question has a positive answer for any almost simple group with socle $G$. 
\end{lemma}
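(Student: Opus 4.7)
The plan is to reduce the statement directly to Lemma~\ref{lemma:PSp6} via the classical exceptional isomorphism $\operatorname{P\Omega}(7,q) \cong \operatorname{PSp}(6,q)$. For $q$ odd this comes from the coincidence of Dynkin diagrams of types $B_3$ and $C_3$; for $q$ even the group $\operatorname{P\Omega}(7,q)$ coincides with $\operatorname{Sp}(6,q) = \operatorname{PSp}(6,q)$. A direct calculation confirms that the orders match, namely
\[|\operatorname{P\Omega}(7,q)| = \frac{1}{\gcd(2,q-1)}\, q^9(q^2-1)(q^4-1)(q^6-1) = |\operatorname{PSp}(6,q)|,\]
and that the outer automorphism groups agree as well: the Dynkin diagram of type $B_3 = C_3$ has no non-trivial graph automorphism, so in both pictures $\operatorname{Out}(G)$ is generated by the diagonal and field automorphisms, yielding $\alpha(|\operatorname{Out}(G)|) = \alpha(f) = c$.

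Because the isomorphism carries any almost simple group $H$ with socle $\operatorname{P\Omega}(7,q)$ to an almost simple group with socle $\operatorname{PSp}(6,q)$, and because the arithmetical hypotheses in the present lemma (namely $c$ squarefree and coprime to $(q^2+q+1)(q^2-q+1)$, and $\alpha((q^2+q+1)(q^2-q+1))$ squarefree) coincide verbatim with those of Lemma~\ref{lemma:PSp6}, the conclusion follows immediately from that lemma.

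If one preferred a self-contained argument bypassing the isomorphism, the strategy would simply mirror the proof of Lemma~\ref{lemma:PSp6}: write $|G|$ in the factored form above, take the natural subgroup $\operatorname{\Omega}(5,q) \times \operatorname{\Omega}^{\pm}(2,q) \leq G$, use the isomorphism $\operatorname{\Omega}(5,q) \cong \operatorname{Sp}(4,q)$ to produce elements of order $\alpha(q^2+1)$ inside $G$, and for any pair of primes $r,s$ dividing $|H|$ either place them inside a subgroup known to contain an element of order $rs$, or else observe that one of them divides the squarefree integer $\alpha((q^2+q+1)(q^2-q+1)c)$, so that the corresponding Sylow subgroup of $H$ has order exactly $r$ or $s$, allowing Theorem~\ref{main_theorem} to finish. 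The only point worth double-checking is that no extra outer automorphism arises on the orthogonal side, which is automatic in rank $3$; so the real difficulty in this family is not the proof of the lemma itself but the number-theoretic hypothesis on squarefree values, which is dealt with in Section~\ref{sec_numbertheory}.
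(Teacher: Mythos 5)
Your primary reduction rests on a false isomorphism. For $q$ odd, $\operatorname{P\Omega}(2n+1,q)$ and $\operatorname{PSp}(2n,q)$ are \emph{not} isomorphic once $n\geq 3$; they form the classical infinite family of pairs of non-isomorphic finite simple groups sharing the same order. The Dynkin-diagram coincidence $B_n=C_n$ only occurs for $n\leq 2$, which is why $\operatorname{P\Omega}(5,q)\cong\operatorname{PSp}(4,q)$ holds but $\operatorname{P\Omega}(7,q)\cong\operatorname{PSp}(6,q)$ does not (for example $O_7(3)$ and $S_6(3)$ are distinct groups in the ATLAS). The characteristic $2$ identification $\Omega(2n+1,q)\cong\operatorname{Sp}(2n,q)$ is genuine, but that is exactly the case the lemma does not need, since for $q$ even the group $\operatorname{P\Omega}(7,q)$ is excluded from the statement of the classification in the way the paper treats it (one typically writes $\operatorname{P\Omega}(7,q)$ for odd $q$ and works with $\operatorname{PSp}(6,q)$ for even $q$). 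So the Prime Graph Question for $\operatorname{P\Omega}(7,q)$ cannot be deduced from the $\operatorname{PSp}(6,q)$ case by transport of structure.

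Your fallback self-contained argument also has a gap, concentrated in the choice of subgroup. Take $r=p$ and a prime $s>3$ with $s\mid q^2+1$ (so $s\nmid q^2-1$). Such a pair is an edge in the prime graph of $G$: the subgroup $\operatorname{P\Omega}(3,q)\times\operatorname{P\Omega}^-(4,q)\cong\operatorname{PSL}(2,q)\times\operatorname{PSL}(2,q^2)$ used in the paper produces an element of order $ps$ (unipotent of order $p$ in the first factor times a Singer element of order $\alpha(q^2+1)$ in the second). But in your subgroup $\Omega(5,q)\times\Omega^{\pm}(2,q)$ this element order does not occur: the factor $\Omega^{\pm}(2,q)$ has order prime to both $p$ and $s$, and in $\Omega(5,q)\cong\operatorname{PSp}(4,q)$ an element of order $s$ with $s\mid q^2+1$ is regular semisimple (its eigenvalues $\zeta,\zeta^q,\zeta^{q^2},\zeta^{q^3}$ are distinct because $q$ has order $4$ modulo $s$), so its centralizer is a torus of order $q^2+1$ containing no $p$-torsion, and hence $\operatorname{PSp}(4,q)$ has no element of order $ps$. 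Thus this pair $(r,s)$ falls outside what your subgroup detects and outside the squarefree condition (which covers only $\Phi_3(q)\Phi_6(q)$ and $c$), so your argument cannot conclude. The paper's choice $\operatorname{P\Omega}(3,q)\times\operatorname{P\Omega}^-(4,q)$ is precisely what fixes this: the $\operatorname{PSL}(2,q^2)$ factor carries the full $\alpha(q^2+1)$-torus while leaving the unipotent $p$ for the $\operatorname{PSL}(2,q)$ factor.
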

\begin{proof}
The necessary facts about $G$ are contained in \cite[Section 3.7]{Wilson}. We have
\begin{align*}
|G| &= \frac{1}{\gcd(2,q+1)} q^9 (q^2-1)(q^4-1)(q^6-1) \\
 &= \frac{1}{\gcd(2,q+1)} q^6 (q-1)^3(q+1)^3(q^2+1)(q^2+q+1)(q^2-q+1)
\end{align*}
and $\alpha(|\Out(G)|) = c$. The proof now follows in the same way as in Lemma~\ref{lemma:PSL4} using a subgroup mapping onto $\operatorname{P\Omega}(3,q) \times \operatorname{P\Omega}^-(4,q)$ and the fact that $\operatorname{P\Omega}^-(4,q)$ contains an element of order $\alpha(q^2+1)$.
\end{proof}

\begin{lemma}\label{lemma:O+8}
Let $G = \operatorname{P\Omega}^+(8,q)$ with $q=p^f$ and let $\alpha(f) = c$. Assume $c$ is squarefree and coprime to $(q^2+q+1)(q^2-q+1)$. If $\alpha((q^2+q+1)(q^2-q+1))$ is also squarefree, then the Prime Graph Question has a positive answer for any almost simple group with socle $G$. 
\end{lemma}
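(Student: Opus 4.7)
The plan is to follow the template of the previous five lemmas, now using $\operatorname{P\Omega}(7,q)$ as an intermediate subgroup of $G = \operatorname{P\Omega}^+(8,q)$. First I would cite \cite[Section 3.8]{Wilson} (or equivalent) to record
$$|G| = \frac{1}{\gcd(4,q^4-1)}\, q^{12}(q^2-1)(q^4-1)^2(q^6-1)
= \frac{1}{\gcd(4,q^4-1)}\, q^{12}(q-1)^4(q+1)^4(q^2+1)^2(q^2+q+1)(q^2-q+1),$$
noting that although $\operatorname{P\Omega}^+(8,q)$ has an extra triality factor so that $|\Out(G)| = \gcd(4,q^4-1)\cdot f \cdot 6$, one still has $\alpha(|\Out(G)|) = \alpha(f) = c$, because $\gcd(4,q^4-1) \in \{1,4\}$ and the factor $6$ are all stripped by $\alpha$.

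Next I would produce a suitable subgroup in which many prime pairs can be realized as element orders. The natural inclusion $\operatorname{P\Omega}(7,q) \leq G$ as the pointwise stabiliser of a non-singular vector in the natural $8$-dimensional orthogonal module allows us to import the subgroup used in Lemma~\ref{lemma:O7}: namely $\operatorname{P\Omega}(7,q)$ contains a subgroup mapping onto $\operatorname{P\Omega}(3,q) \times \operatorname{P\Omega}^{-}(4,q) \cong \operatorname{PSL}(2,q) \times \operatorname{PSL}(2,q^2)$. Since $\operatorname{PSL}(2,q^2)$ contains cyclic subgroups of order $\alpha(q^2+1)$ as well as of order $\alpha(q^2-1)$, this subgroup contains an element of order $rs$ for every pair of primes $r,s$ both dividing $q(q-1)(q+1)(q^2+1)$.

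Now let $H$ be an almost simple group with socle $G$ and suppose $r,s$ are distinct primes such that $\V(\ZZ H)$ contains an element of order $rs$; we must show $H$ does as well. If both $r,s$ divide $q(q-1)(q+1)(q^2+1)$, the preceding paragraph furnishes an element of order $rs$ in $G \leq H$. Otherwise at least one of them, say $r$, divides $\alpha((q^2+q+1)(q^2-q+1)\cdot c)$. Because $c$ is squarefree and coprime to $(q^2+q+1)(q^2-q+1)$ and $\alpha((q^2+q+1)(q^2-q+1))$ is squarefree by hypothesis (and the two factors are coprime by the Euclidean argument $\gcd(q^2+q+1,q^2-q+1)\mid 2q$), this number is squarefree, and each of its prime divisors appears only once in $|G|$ (since $(q^2\pm q+1)$ each appear with multiplicity one in $|G|$) and only once in $|\Out(G)|$. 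Hence a Sylow $r$-subgroup of $H$ has order $r$, and Theorem~\ref{main_theorem} yields the required element of order $rs$ in $H$.

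The only point requiring care beyond the previous lemmas is the triality contribution to $|\Out(G)|$, but since $\alpha$ removes the factor $6$ this is immaterial; the formula for $|G|$ and the embedding $\operatorname{P\Omega}(7,q) \leq \operatorname{P\Omega}^+(8,q)$ are both standard, so no further obstacle arises.
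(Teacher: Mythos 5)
Your proposal is correct and takes essentially the same route as the paper: express $|G|$ in terms of cyclotomic factors, note that $\alpha(|\Out(G)|) = c$ despite the triality contribution, use a product subgroup to realize element orders $rs$ for $r,s$ dividing $q(q-1)(q+1)(q^2+1)$ (this is needed precisely because $(q^2+1)^2$ appears in $|G|$, so primes dividing $q^2+1$ cannot be handled by Theorem~\ref{main_theorem} alone), and then invoke Theorem~\ref{main_theorem} for the remaining primes, which divide the squarefree number $\alpha((q^2+q+1)(q^2-q+1)c)$. The only difference is cosmetic: the paper uses a subgroup mapping onto $\operatorname{P\Omega}(3,q) \times \operatorname{P\Omega}(5,q)$ directly (with $\operatorname{P\Omega}(5,q) \cong \operatorname{PSp}(4,q)$ containing an element of order $\alpha(q^2+1)$), whereas you go through the intermediate $\operatorname{P\Omega}(7,q)$ and reuse the subgroup $\operatorname{P\Omega}(3,q) \times \operatorname{P\Omega}^-(4,q) \cong \operatorname{PSL}(2,q) \times \operatorname{PSL}(2,q^2)$ from Lemma~\ref{lemma:O7}; both furnish the needed element orders.
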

\begin{proof}
As before we use \cite[Section 3.7]{Wilson}. We have
\begin{align*}
|G| &= \frac{1}{\gcd(4,q^4-1)} q^{12} (q^2-1)(q^4-1)(q^6-1)(q^4-1) \\
 &= \frac{1}{\gcd(4,q^4-1)} q^{12} (q-1)^4(q+1)^4(q^2+1)^2(q^2+q+1)(q^2-q+1)
\end{align*}
and $\alpha(|\Out(G)|) = c$. The proof now follows in the same way as in Lemma~\ref{lemma:PSL4} using a subgroup mapping onto $\operatorname{P\Omega}(3,q) \times \operatorname{P\Omega}(5,q)$ and the fact that the group $\operatorname{P\Omega}(5,q)$ contains an element of order $\alpha(q^2+1)$.
\end{proof}

Finally we also deal with one series of exceptional groups.

\begin{lemma}\label{lemma:G2}
Let $G = G_2(q)$ with $q=p^f$ and let $\alpha(f) = c$. Assume $c$ is squarefree and coprime to $(q^2+q+1)(q^2-q+1)$. If the $\alpha((q^2+q+1)(q^2-q+1))$ is also squarefree, then the Prime Graph Question has a positive answer for any almost simple group with socle $G$. 
\end{lemma}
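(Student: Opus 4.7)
The plan is to follow the template established by Lemmas~\ref{lemma:PSL4}--\ref{lemma:O+8}: look up the order of $G = G_2(q)$ and of $\operatorname{Out}(G)$, exhibit subgroups of $G$ whose orders cover every pair of primes dividing $q(q^2-1)$, and then hand the remaining ``hard'' pairs to Theorem~\ref{main_theorem} via the squarefreeness assumption.

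To begin, I would cite \cite[Section 4.4]{Wilson} for the factorization
$$|G| = q^6(q^2-1)(q^6-1) = q^6(q-1)^2(q+1)^2(q^2+q+1)(q^2-q+1)$$
and for the fact that $\operatorname{Out}(G)$ is cyclic of order $f$ when $p \neq 3$ and of order $2f$ when $p = 3$; in either case $\alpha(|\operatorname{Out}(G)|) = \alpha(f) = c$. The list of maximal subgroups of $G_2(q)$ provides three subgroups which together will do all the work: a copy of $\operatorname{SL}(3,q)$, a copy of $\operatorname{SU}(3,q)$, and a cyclic maximal torus of order $q^2-1$.

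The heart of the proof is to check that these three subgroups yield, for every pair of primes $r, s$ both dividing $q(q-1)(q+1)$, an element of order $rs$ in $G$. If both $r, s$ divide $q(q-1)$, a $p$-element in the centre of the unipotent radical of a Borel of $\operatorname{SL}(3,q)$ commutes with the split torus of order $(q-1)^2$, producing an element of order $p(q-1)$; the case of two primes both dividing $q-1$ is already supplied by this torus. Symmetrically, $\operatorname{SU}(3,q)$ handles the pairs with $r, s$ dividing $q(q+1)$, by using the cyclic Borel torus of order $q^2-1$ which contains the subgroup of order $q+1$. The mixed case $r \mid q-1$, $s \mid q+1$ is covered directly by the cyclic torus of order $q^2-1$ of $G_2(q)$ itself.

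With this in hand, the conclusion is essentially bookkeeping identical to the previous lemmas. If $H$ is almost simple with socle $G$ and $r, s$ are primes dividing $|H|$ such that $H$ contains no element of order $rs$, then one of them must fail to divide $q(q-1)(q+1)$, and hence divides $\alpha((q^2+q+1)(q^2-q+1)c)$; here one uses that $q^2 \pm q + 1$ is odd and that the coprimality of $c$ with $(q^2+q+1)(q^2-q+1)$ makes the $\alpha$-values multiply squarefreely. The squarefree hypothesis then forces the Sylow subgroup of $H$ at this prime to have order a prime, and Theorem~\ref{main_theorem} delivers the result. The only step with any substance is the subgroup-theoretic verification in the previous paragraph, which is classical and completely parallel to the matching steps in Lemmas~\ref{lemma:PSL4}--\ref{lemma:O+8}, so I expect no real obstacle.
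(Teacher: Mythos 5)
Your proof is correct in substance, and the bookkeeping at the end matches the paper exactly, but you take a different route at the one non-routine step. The paper invokes a single subgroup of $G_2(q)$ that has $\PSL(2,q)\times\PSL(2,q)$ as a quotient (the involution centralizer) and observes that this quotient, hence $G$ itself, contains an element of order $rs$ for every pair of distinct primes $r,s$ dividing $q(q-1)(q+1)$ — the same one-line device used in Lemmas~\ref{lemma:PSL4}--\ref{lemma:O+8}. You instead appeal to three subgroups ($\mathrm{SL}(3,q)$, $\mathrm{SU}(3,q)$, and a maximal torus of order $q^2-1$) and verify each prime pair case by case. Both are valid; the paper's version is shorter and keeps the family of Lemmas~\ref{lemma:PSL4}--\ref{lemma:G2} uniform, while yours carries some Borel/torus structure theory that the $\PSL(2,q)^2$ shortcut avoids. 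Two small imprecisions in your argument do not affect the conclusion but should be cleaned up: the long-root element of $\mathrm{SL}(3,q)$ is centralized not by the full split torus of order $(q-1)^2$ but only by a rank-one subtorus of order $q-1$ (which is all you need; the analogous centralizer in the $\mathrm{SU}(3,q)$ Borel torus has order $q+1$), and the maximal torus of $G_2(q)$ of order $q^2-1$ is in general $C_{q-1}\times C_{q+1}$ rather than cyclic when $q$ is odd, though it still contains elements of order $rs$ for distinct primes $r\mid q-1$, $s\mid q+1$. Finally, the reference should be to \cite[Section 4.3]{Wilson}, as in the paper, rather than Section 4.4.
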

\begin{proof}
We will use facts about $G$ given in \cite[Section 4.3]{Wilson}. Let $H$ be an almost simple group with socle $G$ and let $r$ and $s$ be primes dividing the order of $H$. We have
$$|G| = q^6(q^6-1)(q^2-1) = q^6(q-1)^2(q+1)^2(q^2+q+1)(q^2-q+1) $$
and $\Out(G) \cong C_f$, if $p \neq 3,$ or $\Out(G) \cong C_{2f}$, if $p=3$. Moreover $G$ has a quotient with a subgroup isomorphic to $\PSL(2,q)\times \PSL(2,q)$. Hence, if $r$ and $s$ divide $q(q-1)(q+1)$, then $H$ contains an element of order $rs$. So if $H$ does not contain an element of order $rs$, then $r$ or $s$ is a divisor of $\alpha((q^2+q+1)(q^2-q+1)c)$. This number is squarefree by assumption and so the Prime Graph Question has a positive answer for $H$ by Theorem~\ref{main_theorem}.
\end{proof}

The proof of Corollary~\ref{Series} is now a direct application of Theorem~\ref{theorem:nt} in the situations described in the preceding lemmas.

\begin{proof}[Proof of Corollary~\ref{Series}]
By Lemmas~\ref{lemma:PSL4}-\ref{lemma:G2} it is sufficient to show that there are infinitely many primes $p$ such that $(p^2+p+1)(p^2+1)(p^2-p+1)$ is squarefree. This is true by Theorem~\ref{theorem:nt}.
\end{proof}

It seems probable that a better understanding of number theoretical sieves can add further infinite series of almost simple groups of Lie type for which Theorem~\ref{main_theorem} can answer the Prime Graph Question.

\subsection{Groups from the GAP character table library}
The proof of Corollary~\ref{cor:Library} is a direct application of Theorem~\ref{main_theorem}.

We finish by providing an overview of the almost simple groups from the GAP Character Table Library for which the Prime Graph Question has already been studied before and for which of those Theorem~\ref{main_theorem} is sufficient to obtain a positive result. We exclude groups whose socle is a sporadic group, an alternating group or a group of type $\PSL(2,p)$ or $\PSL(2,p^2)$ for $p$ a prime.

\newpage

\begin{center}
\begin{longtable}{|p{.45\textwidth} | p{.2\textwidth}|  p{.25\textwidth}|} \hline
{\bf (PQ)} holds by Theorem~\ref{main_theorem} & {\bf (PQ)} known with other result & {\bf (PQ)} not known  (all studied in \cite{BachleMargolis4primaryI}) \\ \hline
$\PSL(2,16)$ $\PSL(2,32)$, $\PSL(3,3)$, $\PSL(3,5)$ & $\PSL(2,8)$ \cite{KimmerleKonovalov} & $\PSL(2,27)$, $\PSL(2,81)$,   \\
$\PSL(4,3)$  &  $\PSL(3,4)$ \cite{BachleMargolis4primaryII} & $\PSL(3,7)$, $\PSL(3,8)$ \\
$\operatorname{PSp}(4,4)$, $\operatorname{PSp}(4,5)$, $\operatorname{PSp}(4,9)$, $\operatorname{PSp}(6,2)$ & & $\operatorname{PSp}(4,7)$ \\
$\operatorname{PSU}(3,3)$, $\operatorname{PSU}(3,4)$,  $\operatorname{PSU}(3,7)$ & $\operatorname{PSU}(3,5)$ \cite{BachleMargolis4primaryI} & \\
$\operatorname{PSU}(3,8)$, $\operatorname{PSU}(3,9)$, $\operatorname{PSU}(4,2)$, $\operatorname{PSU}(4,3)$ & & \\
$\operatorname{PSU}(4,4)$, $\operatorname{PSU}(4,5)$, $\operatorname{PSU}(5,2)$ & & \\
$\operatorname{P\Omega}^+(8,2)$ & & \\
$G_2(3)$, $G_2(4)$, $Sz(8)$, ${}^3D_4(2)$ & ${}^2F_4(2)'$ \cite{BachleMargolis4primaryI} & $Sz(32)$ \\
\hline 
\caption{Groups from the GAP character table library for which (PQ) has been studied before.}\label{table2}
\end{longtable}
\end{center}

\vspace*{-1cm}

\textbf{Acknowledgments:} We are very thankful to Roger Heath-Brown for his proof of Theorem~\ref{theorem:nt} and Dan Carmon for help with the number theory involved. We also thank Gunter Malle for useful conversations on the structure of groups of Lie type. Moreover, we thank the referee for his valuable comments which helped to improve the paper significantly.

\bibliographystyle{amsalpha}
\bibliography{sporadicos}

\newcommand{\etalchar}[1]{$^{#1}$}
\providecommand{\bysame}{\leavevmode\hbox to3em{\hrulefill}\thinspace}
\providecommand{\MR}{\relax\ifhmode\unskip\space\fi MR }
% \MRhref is called by the amsart/book/proc definition of \MR.
\providecommand{\MRhref}[2]{%
  \href{http://www.ams.org/mathscinet-getitem?mr=#1}{#2}
}
\providecommand{\href}[2]{#2}
\begin{thebibliography}{CCN{\etalchar{+}}85}

\bibitem[BGK09]{BovdiKonovalovONan}
V.~Bovdi, A.~Grishkov, and A.~Konovalov, \emph{Kimmerle conjecture for the
  {H}eld and {O}'{N}an sporadic simple groups}, Sci. Math. Jpn. \textbf{69}
  (2009), no.~3, 353--361.

\bibitem[BK12]{BovdiKonovalovM24}
V.~Bovdi and A.~Konovalov, \emph{Integral group ring of the {M}athieu simple
  group {$M_{24}$}}, J. Algebra Appl. \textbf{11} (2012), no.~1, 1250016, 10.

\bibitem[BM17a]{BachleMargolis4primaryI}
A.~B\"{a}chle and L.~Margolis, \emph{On the prime graph question for integral
  group rings of 4-primary groups {I}}, Internat. J. Algebra Comput.
  \textbf{27} (2017), no.~6, 731--767.

\bibitem[BM17b]{BachleMargolisLattice}
\bysame, \emph{Rational conjugacy of torsion units in integral group rings of
  non-solvable groups}, Proc. Edinb. Math. Soc. (2) \textbf{60} (2017), no.~4,
  813--830.

\bibitem[BM18]{BachleMargolisHeLP}
\bysame, \emph{He{LP}: a {GAP} package for torsion units in integral group
  rings}, J. Softw. Algebra Geom. \textbf{8} (2018), 1--9.

\bibitem[BM19a]{BachleMargolisSymmetric}
\bysame, \emph{An application of blocks to torsion units in group rings}, Proc.
  Amer. Math. Soc. \textbf{147} (2019), no.~10, 4221--4231.

\bibitem[BM19b]{BachleMargolis4primaryII}
\bysame, \emph{On the {P}rime {G}raph {Q}uestion for {I}ntegral {G}roup {R}ings
  of 4-{P}rimary {G}roups {II}}, Algebr. Represent. Theory \textbf{22} (2019),
  no.~2, 437--457.

\bibitem[Bre12]{CTblLib}
T.~Breuer, \emph{The \textsf{GAP} {C}haracter {T}able {L}ibrary, {V}ersion
  1.2.1}, \url{http://www.math.rwth-aachen.de/\~Thomas.Breuer/ctbllib}, May
  2012, \textsf{GAP} package.

\bibitem[CCN{\etalchar{+}}85]{Atlas}
J.~H. Conway, R.~T. Curtis, S.~P. Norton, R.~A. Parker, and R.~A. Wilson,
  \emph{Atlas of finite groups}, Oxford University Press, Eynsham, 1985.

\bibitem[CL65]{CohnLivingstone}
J.~A. Cohn and D.~Livingstone, \emph{On the structure of group algebras. {I}},
  Canad. J. Math. \textbf{17} (1965), 583--593.

\bibitem[EM18]{EiseleMargolis18}
F.~Eisele and L.~Margolis, \emph{A counterexample to the first {Z}assenhaus
  conjecture}, Adv. Math. \textbf{339} (2018), 599--641.

\bibitem[Erd53]{Erdos53}
P.~Erd\"{o}s, \emph{Arithmetical properties of polynomials}, J. London Math.
  Soc. \textbf{28} (1953), 416--425.

\bibitem[Est31]{Estermann}
T.~Estermann, \emph{Einige {S}\"{a}tze \"{u}ber quadratfreie {Z}ahlen}, Math.
  Ann. \textbf{105} (1931), no.~1, 653--662.

\bibitem[Fei82]{Feit82}
W.~Feit, \emph{The representation theory of finite groups}, North-Holland
  Mathematical Library, vol.~25, North-Holland Publishing Co., Amsterdam-New
  York, 1982.

\bibitem[Ful97]{Fulton1997}
W.~Fulton, \emph{Young tableaux}, London Mathematical Society Student Texts,
  vol.~35, Cambridge University Press, Cambridge, 1997.

\bibitem[GAP19]{GAP}
The GAP~Group, \emph{{GAP -- Groups, Algorithms, and Programming, Version
  4.10.0}}, 2019, \url{http://www.gap-system.org}.

\bibitem[Gra98]{Granville98}
A.~Granville, \emph{{$ABC$} allows us to count squarefrees}, Internat. Math.
  Res. Notices (1998), no.~19, 991--1009.

\bibitem[HB97]{HeathBrown97}
D.~R. Heath-Brown, \emph{The density of rational points on cubic surfaces},
  Acta Arith. \textbf{79} (1997), no.~1, 17--30.

\bibitem[HB13]{HeathBrown13}
\bysame, \emph{Power-free values of polynomials}, Q. J. Math. \textbf{64}
  (2013), no.~1, 177--188.

\bibitem[Hel14]{Helfgott14}
H.~Helfgott, \emph{Square-free values of {$f(p)$}, {$f$} cubic}, Acta Math.
  \textbf{213} (2014), no.~1, 107--135.

\bibitem[Her07]{HertweckBrauer}
M.~Hertweck, \emph{Partial {A}ugmentations and {B}rauer character values of
  torsion units in group rings}, 1--16,
  \href{http://arxiv.org/abs/math/0612429v2}{\nolinkurl{arXiv:math.RA/0612429v2
  [math.RA]}}.

\bibitem[Her08a]{HertweckOrders}
\bysame, \emph{The orders of torsion units in integral group rings of finite
  solvable groups}, Comm. Algebra \textbf{36} (2008), no.~10, 3585--3588.

\bibitem[Her08b]{HertweckA6}
\bysame, \emph{Zassenhaus conjecture for {$A_6$}}, Proc. Indian Acad. Sci.
  Math. Sci. \textbf{118} (2008), no.~2, 189--195.

\bibitem[Hig40]{Higman1940Thesis}
G.~Higman, \emph{Units in group rings}, 1940, Thesis (Ph.D.)--Univ. Oxford.

\bibitem[Hoo67]{Hooley67}
C.~Hooley, \emph{On the power free values of polynomials}, Mathematika
  \textbf{14} (1967), 21--26.

\bibitem[Isa76]{Isaacs1976}
I.~M. Isaacs, \emph{Character theory of finite groups}, Academic Press
  [Harcourt Brace Jovanovich Publishers], New York, 1976, Pure and Applied
  Mathematics, No. 69.

\bibitem[JdR16a]{GRG1}
E.~Jespers and \'{A}. del R\'{\i}o, \emph{Group ring groups. {V}ol. 1. {O}rders
  and generic constructions of units}, De Gruyter Graduate, De Gruyter, Berlin,
  2016.

\bibitem[JdR16b]{GRG2}
E.~Jespers and {\'A}.~del R\'{\i}o, \emph{{Group ring groups. Volume 2:
  Structure theorems of unit groups}}, Berlin: De Gruyter, 2016.

\bibitem[Kim06]{Kimmerle2006}
W.~Kimmerle, \emph{On the prime graph of the unit group of integral group rings
  of finite groups}, Groups, rings and algebras, Contemp. Math., vol. 420,
  Amer. Math. Soc., Providence, RI, 2006, pp.~215--228.

\bibitem[KK15]{KimmerleKonovalov15}
W.~Kimmerle and A.~Konovalov, \emph{Recent advances on torsion subgroups of
  integral group rings}, Groups {S}t {A}ndrews 2013, London Math. Soc. Lecture
  Note Ser., vol. 422, Cambridge Univ. Press, Cambridge, 2015, pp.~331--347.

\bibitem[KK17]{KimmerleKonovalov}
\bysame, \emph{On the {G}ruenberg-{K}egel graph of integral group rings of
  finite groups}, Internat. J. Algebra Comput. \textbf{27} (2017), no.~6,
  619--631.

\bibitem[Lin19]{Linckelmann19}
M.~Linckelmann, \emph{The block theory of finite group algebras}, London
  Mathematical Society Student Texts, vol.~92, Cambridge University Press,
  Cambridge, 2019.

\bibitem[LP89]{LP89}
I.~S. Luthar and I.~B.~S. Passi, \emph{Zassenhaus conjecture for {$A_5$}},
  Proc. Indian Acad. Sci. Math. Sci. \textbf{99} (1989), no.~1, 1--5.

\bibitem[LP10]{LuxPahlings}
K.~Lux and H.~Pahlings, \emph{Representations of groups. a computational
  approach}, Cambridge Studies in Advanced Mathematics, vol. 124, Cambridge
  University Press, Cambridge, 2010.

\bibitem[Mar19]{MargolisConway}
L.~Margolis, \emph{On the prime graph question for integral group rings of
  {C}onway simple groups}, J. Symbolic Comput. \textbf{95} (2019), 162--176.

\bibitem[MT11]{MalleTesterman}
G.~Malle and D.~Testerman, \emph{Linear algebraic groups and finite groups of
  {L}ie type}, Cambridge Studies in Advanced Mathematics, vol. 133, Cambridge
  University Press, Cambridge, 2011.

\bibitem[MV07]{MontgomeryVaughan}
H.~L. Montgomery and R.~C. Vaughan, \emph{Multiplicative number theory. {I}.
  {C}lassical theory}, Cambridge Studies in Advanced Mathematics, vol.~97,
  Cambridge University Press, Cambridge, 2007.

\bibitem[Nav98]{Navarro98}
G.~Navarro, \emph{Characters and blocks of finite groups}, London Mathematical
  Society Lecture Note Series, vol. 250, Cambridge University Press, Cambridge,
  1998.

\bibitem[Pas15]{Pasten15}
H.~Pasten, \emph{The {ABC} conjecture, arithmetic progressions of primes and
  squarefree values of polynomials at prime arguments}, Int. J. Number Theory
  \textbf{11} (2015), no.~3, 721--737.

\bibitem[Reu15]{Reuss15}
T.~Reuss, \emph{Power-free values of polynomials}, Bull. Lond. Math. Soc.
  \textbf{47} (2015), no.~2, 270--284.

\bibitem[Ric33]{Ricci33}
G.~Ricci, \emph{Ricerche aritmetiche sui polinomi}, Rend. Circ. Mat. Palermo
  \textbf{57} (1933), 433--475.

\bibitem[Seh93]{Sehgal1993}
S.~K. Sehgal, \emph{Units in integral group rings}, Pitman Monographs and
  Surveys in Pure and Applied Mathematics, vol.~69, Longman Scientific \&
  Technical, Harlow, 1993.

\bibitem[Ver18]{Brecht}
B.~Verbeken, \emph{Units in {I}ntegral {G}roup {R}ings: {T}he {Z}assenhaus
  {C}onjecture and the {P}rime {G}raph {Q}uestion with the use of the
  {HeLP}-method}, Master's thesis, Vrije Universiteit Brussel, 2018.

\bibitem[Wil09]{Wilson}
R.~A. Wilson, \emph{The finite simple groups}, Graduate Texts in Mathematics,
  vol. 251, Springer-Verlag London, Ltd., London, 2009.

\end{thebibliography}

\end{document}